\newtheorem{theorem}{Theorem}[section]
\newtheorem{remark}[theorem]{Remark}
\newtheorem{assumption}[theorem]{Assumption}
\newtheorem{lemma}[theorem]{Lemma}
\newtheorem{proposition}[theorem]{Proposition}
\newtheorem{definition}[theorem]{Definition}
\def \E{\mathsf{E}}
\def \P{\mathsf{P}}
\def \R{\mathbb{R}}
\def \Q{\mathsf{Q}}
\def\d{\mathrm{d}}
\DeclareMathOperator*{\esssup}{ess\,sup}
\DeclareMathOperator*{\essinf}{ess\,inf}
\title[Irreversible Investment under Knightian Uncertainty]{A Knightian Irreversible Investment Problem}
\author[Ferrari]{Giorgio Ferrari}
\author[Li]{Hanwu Li}
\author[Riedel]{Frank Riedel}
\address{G.~Ferrari: Center for Mathematical Economics (IMW), Bielefeld University, Universit\"atsstrasse 25, 33615, Bielefeld, Germany}
\email{\href{mailto:giorgio.ferrari@uni-bielefeld.de}{giorgio.ferrari@uni-bielefeld.de}}
\address{H.\ Li: Center for Mathematical Economics (IMW), Bielefeld University, Universit\"atsstrasse 25, 33615, Bielefeld, Germany}
\email{\href{mailto:hanwu.li@uni-bielefeld.de}{hanwu.li@uni-bielefeld.de}}
\address{F.\ Riedel: Center for Mathematical Economics (IMW), Bielefeld University, Universit\"atsstrasse 25, 33615, Bielefeld, Germany}
\email{\href{mailto:frank.riedel@uni-bielefeld.de}{frank.riedel@uni-bielefeld.de}}
\date{\today}
\numberwithin{equation}{section}
\begin{document}

\begin{abstract}
In this paper, we study an irreversible investment problem under Knightian uncertainty. In a general framework, in which Knightian uncertainty is modeled through a set of multiple priors, we prove existence and uniqueness of the optimal investment plan, and derive necessary and sufficient conditions for optimality. This allows us to construct the optimal policy in terms of the solution to a stochastic backward equation under the worst-case scenario. In a time-homogeneous setting -- where risk is driven by a geometric Brownian motion and Knightian uncertainty is realized through a so-called ``$\kappa$-ignorance'' -- we are able to provide the explicit form of the optimal irreversible investment plan.
\end{abstract}

\maketitle

\smallskip

{\textbf{Keywords}}: irreversible investment, Knightian uncertainty, singular stochastic control, base capacity policy, first-order conditions for optimality, backward equations.

\smallskip

{\textbf{MSC2010 subject classification}}: 93E20, 91B38, 65C30.

\smallskip

{\textbf{JEL classification}}: D81, C61, G11.

\section{Introduction}

The theory of irreversible investment under uncertainty has received much attention in Economics as well as in Mathematics (see, for example, the extensive review in Dixit and Pindyck \cite{DP}). Kenneth  Arrow pioneered the analysis  in \cite{A} by solving  a deterministic irreversible investment problem, while \cite{B98} and \cite{P88} extend the analysis to a stochastic setting. There, randomness enters the model through a geometric Brownian motion representing an exogenous economic shock, and the firm's profit function is of Cobb-Douglas type. Afterward, several other modeling features have been introduced, such as the presence of jumps in the dynamics of the economic shock (e.g., \cite{B04, BL}), or regime switching models \cite{GMM}. From a mathematical point of view, optimal irreversible investment problems under uncertainty can be modeled as singular stochastic control problems; in fact, the economic constraint that does not allow disinvestment is well encompassed by thinking of the cumulative investment as a monotone control process. Singular stochastic control problems with applications to irreversible investment have been addressed via different methods ranging from the dynamic programming approach (\cite{Kobila, LoZe, Pham}, among many others), to the theory of $r$-excessive mappings \cite{Alvarez1, Alvarez2}, the connection to optimal stopping (e.g., \cite{CH2, DeAFeFe}), and stochastic representation problems \cite{BaBe, CF, Fe15, FeSa}.

Recently, Riedel and Su investigated in \cite{RS} an irreversible investment problem in a general not necessarily Markovian setting. They proved existence and uniqueness of the optimal investment plan, and constructed it as the minimal investment needed to keep the production capacity above the so-called ``base capacity''. This is an endogenously determined minimal level of production, and it is mathematically characterized as the solution to a backward equation deriving from first-order conditions for optimality. The kind of backward equation considered in \cite{RS} appeared for the first time in \cite{BR} (in the context of an optimal consumption problem under intertemporal substitution), and its general mathematical analysis was later developed in \cite{BE}.

All the above works are based on the assumption that, although the firm does not know the realization of the future profits, their probability distribution is perfectly known. In this paper we study an irreversible investment problem under Knightian uncertainty (or ambiguity), where the latter is modeled through a set of multiple priors for the profits' distribution. In particular, we assume that the firm is ambiguity-averse and uses a so-called $g$-expectation in order to evaluate its net profits. The $g$-expectation, introduced by Peng in \cite{P97}, characterizes the nonlinear expectation of a random variable $X$ in terms of (the first component of) the solution to a backward stochastic differential equation (BSDE) with terminal value $X$ and driver $g$. The advantage of $g$-expectation is that the Knightian uncertainty is fully characterized by the real-valued function $g$. In order to reflect the firm's ambiguity aversion we assume that the driver $g$ is concave and, within this setting, we let the firm maximize its $g$-expected profits, net of the total proportional costs of investment. In our formulation the resulting optimization problem thus takes the form of a \emph{singular stochastic control problem under Knightian uncertainty}. Indeed, the firm increases the level of the production capacity not necessarily via investment rates; also lump-sum or singularly-continuous interventions are allowed. To the best of our knowledge, this is the first work addressing such a problem.

Our first result establishes existence and uniqueness of the optimal investment plan. As in \cite{RS}, this is accomplished by a suitable application of Koml\'{o}s theorem (in the version of \cite{K99}), upon proving that any feasible investment plan is not larger than the one which is optimal if one neglects the irreversibility constraint.

We then move on by proving necessary and sufficient first-order conditions for optimality. Since we consider the irreversible investment problem under a set of multiple priors $\mathcal{P}$, those conditions involve a subclass of probability measures under which the net expected profits are minimal; these are the so-called worst-case scenarios. Roughly speaking, if the capacity $C^{\star}$ induced by the investment plan $I^{\star}$ satisfies the first-order conditions under a worst-case scenario $\P^\star$, then (under suitable conditions) $I^{\star}$ is the optimal investment policy and viceversa. The first-order conditions naturally degenerate into those in \cite{RS} when $\mathcal{P}$ is a singleton. It is worth pointing out that all the above results, such as existence, uniqueness, and first-order conditions, are beyond the Markovian framework. Moreover, their proofs needed refined technical arguments employing results from BSDEs' theory, and as such they are a non trivial generalization of those in \cite{RS}.

Although the previous existence and uniqueness result has a clear mathematical value, it does not provide the structure of the optimal investment plan. Inspired by \cite{RS}, we then construct the optimal investment plan as a ``base capacity policy''; i.e., in terms of a base capacity process solving a suitable stochastic backward equation deriving from the first-order conditions for optimality. However, differently to \cite{RS}, in our problem the optimal base capacity has to be determined together with the probability measure realizing the worst-case scenario for the net profits that are accrued via the optimal base capacity policy. This clearly makes our problem much more involved than the one without Knightian uncertainty.


In order to obtain an explicit expression for the optimal base capacity and its associated optimal investment plan, we specialize our setting to a time-homogeneous one where: the time horizon is infinite, the economic shock driving the profits is a geometric Brownian motion, the profit function is of Cobb-Douglas type, and the nonlinear expectation is the so-called ``$\kappa$-ignorance" (denoted by $\mathcal{E}^{-\kappa}[\,\cdot\,]$). This is one of the most important examples of $g$-expectation and can be obtained by taking the driver $g$ of the form $g(t,z)=-\kappa|z|$. In this case, the $g$-expectation is a lower expectation, which means that there exists a set of probability measures $\mathcal{P}_\kappa$, such that $\mathcal{E}^{-\kappa}[X]=\inf_{\P\in\mathcal{P}_\kappa}\E^\P[X]$, for any square-integrable random variable $X$. More precisely, $\mathcal{P}_\kappa$ is the collection of all probability measures $\P^\xi$, that are equivalent to a reference one and with Girsanov kernel $\xi$ which is bounded by $\kappa$. The advantage of $\kappa$-ignorance is that the single parameter $\kappa$ completely characterizes the degree of ambiguity; i.e., the more ambiguity, the larger $\kappa$. Within such a setting, we show via a probabilistic verification theorem that the worst-case scenario is $\P^{-\kappa}$, under which the economic shock has the lowest drift, and that the optimal investment plan is the one which is optimal under $\P^{-\kappa}$. By borrowing ideas from \cite{RS}, the latter is explicitly constructed by solving the stochastic backward equation for the base capacity under $\P^{-\kappa}$. With the concrete form of optimal investment plan at hand, we can also show that: (i) the optimal investment is decreasing with respect to the interest rate $r$; (ii) the optimal investment is increasing in the economic shock; (iii) the firm would like to decrease the scale of investment as it faces more ambiguity.

It is worth pointing out that Nishimura and Ozaki \cite{NO} and Thijssen \cite{T} also study an irreversible investment problem under Knightian uncertainty. However, their setting is different from ours. In \cite{NO}, it is assumed that there is ambiguity over certain characteristics regarding the operating profits, which are characterized by a geometric Brownian motion, and the decision of the firm is to choose the best entry time into the market so as to maximize its net profits. On the other hand, in \cite{T} the ambiguity is about the discount rate. In fact, both \cite{NO} and \cite{T} formulate the problem as an optimal stopping one (see also Section 4.2 in \cite{CR}), rather than as a singular stochastic control problem like ours.

The rest of the paper is organized as follows. In Section \ref{sec:formulation} we present the setting and formulate the irreversible investment problem. The existence and uniqueness result is then proved in Section \ref{sec:existence}. Section \ref{sec:FOCs} presents the first-order conditions and then gives the characterization of the optimal investment plan in terms of the base capacity. Some properties of the optimal policy are also discussed in this section. In Section \ref{sec:casestudy} we finally provide the explicit solution to the irreversible investment problem in a time-homogeneous diffusive setting where the Knightian uncertainty is modeled by ``$\kappa$-ignorance". Appendix \ref{Appendix} contains an overview on $g$-expectation, while technical results are collected in Appendices \ref{AppendixC} and \ref{AppendixB}.

	
	\section{Setting and Problem Formulation}
	\label{sec:formulation}
	
	In this section, we introduce the irreversible investment problem under Knightian uncertainty which is the object of our study.
	
	Given a fixed time interval $[0,T]$, we let $(\Omega,\mathcal{F},\mathbb{F}:=\{\mathcal{F}_t\}_{t\in[0,T]}, \P_0)$ be a complete filtered probability space with filtration satisfying the usual conditions. Throughout this paper, $\E$ will be denoting the expectation under $\P_0$. Let $X:=\{X_t\}_{t\in[0,T]}$ be an $\mathbb{F}$-progressively measurable process taking values in some Banach space $E$. Here, $X$ can be regarded as an exogenous economic shock, e.g.\ the demand of a produced good, the state of technological improvement, or macroeconomic conditions.
	
Due to the irreversibility constraint, the firm chooses an investment plan $I=\{I_t\}_{t\in[0,T]}$ which is a nondecreasing, right-continuous and $\mathbb{F}$-adapted process such that $I_{0^-}=0$ a.s. The production capacity $C^I:=\{C^I_t\}_{t\in[0,T]}$ associated to an investment plan $I$ evolves as
	\begin{equation}
	\label{i2}
	\d C_t^I=-\delta C_t^I \d t + \d I_t, \ \  C^I_{0^-}=c \geq 0,
	\end{equation}
	where $\delta\geq 0$ is a given depreciation rate.
	Notice that, by the method of variation of constants, we can write
	\begin{equation}
	\label{solC}
	C_t^I = e^{-\delta t}\Big[c + \int_0^t e^{\delta s} \d I_s\Big], \quad t\geq 0.
	\end{equation}
	Here, and in the following, we interpret the integrals with respect to the (random) Borel-measure $\d I$ on $[0,T]$ in the Lebesgue-Stieltjes sense as $\int_0^T (\,\cdot\,) \d I_t:= \int_{[0,T]} (\,\cdot\,) \d I_t$. In such a way, a possible initial jump of the process $I$ (i.e.\ an initial lump sum investment) is taken into account in the integral.

When the economic shock at time $t$ is $X_t$ and the level of production capacity is $C_t$, the instantaneous operating profit of the firm is $\pi(X_t,C_t)$. Here, $\pi:E\times\mathbb{R}_+ \to \mathbb{R}_+$ is a continuous function satisfying the following \underline{standing assumption}.
	\begin{assumption}
	\label{a1}
	\hspace{10cm}
		\begin{itemize}
			\item[(1)] For each fixed $x\in E$, $\pi(x,\cdot)$ is strictly increasing, strictly concave and continuously differentiable;
			\item[(2)] For each fixed $x\in E$, the partial derivative $\pi_c(x,c)$ satisfies the so-called Inada conditions; that is,
			\begin{displaymath}
				\lim_{c \downarrow 0}\pi_c(x,c)=+\infty, \quad \textrm{ and } \quad \lim_{c \uparrow \infty}\pi_c(x,c)=0;
			\end{displaymath}
			\item[(3)] For each fixed $x\in E$, $\pi(x,0)=0$.
		\end{itemize}
	\end{assumption}

		Increasing the production capacity, the firm incurs proportional costs. We assume that the price of capital goods used to increase capacity is taken as num\'eraire so that the marginal cost of investment is normalized to one. Also, the firm discounts profits and costs at a constant interest rate $r>0$ (now expressed in terms of capital goods, not money). Hence, the cumulative profits of the company, net of the investment costs, are
$$\displaystyle \int_0^T e^{-rt}\pi (X_t,C_t^I) \d t - \int_0^T e^{-rt} \d I_t.$$

We assume that the firm is ambiguity-averse, and that the firm's evaluation of an uncertain gain or profit $\xi$ is given by a functional $\mathcal{E}^g[\xi]$, where $\mathcal{E}^g[\,\cdot\,]$ is a so-called $g$-expectation, whose generator $g:[0,T]\times\Omega\times\mathbb{R}\to\mathbb{R}$ satisfies the following assumption (we refer to Appendix \ref{Appendix} for the definition and important properties of $g$-expectation).
    \begin{assumption}
		\label{a2}
		\hspace{10cm}
    	\begin{itemize}
    		\item[(i)] for any $z\in \mathbb{R}$, $(t,\omega) \mapsto g(t,\omega,z)$ is $\mathbb{F}$-progressively measurable and
   $$\displaystyle \E\bigg[\int_0^T |g(t,\omega, z)|^2 \d t\bigg]<\infty;$$
    		
    		\item[(ii)] There exists a constant $\kappa>0$ such that, for all $(t,\omega) \in [0,T] \times \Omega$ and $z,z'\in \R$,
    		$$\displaystyle |g(t,\omega,z)-g(t,\omega,z')|\leq \kappa|z-z'|;$$
				
    		\item[(iii)] For any $(t,\omega) \in [0,T] \times \Omega$, $z \mapsto g(t,\omega,z)$ is concave;
				
    		\item[(iv)] For any $(t,\omega) \in [0,T] \times \Omega$, $g(t,\omega,0)=0$.
\end{itemize}
\end{assumption}

We now recall here a representation for $g$-expectation (see \cite{EPQ}) that will be useful in most of our subsequent analysis.
\begin{proposition}
\label{prop:A1}
Let the function $g$ satisfy Assumption \ref{a2} (i)-(iii) and denote by $D:=D_g$ the collection of all progressively measurable processes $\{\theta_t\}_{t \in [0,T]}$ such that
$$\E\bigg[\int_0^T |f(s,\theta_s)|^2 \d s\bigg]<\infty,$$
with $f(t,\omega,\theta)=\sup_{z\in\mathbb{R}}(g(t,\omega,z)-z \theta)$ being the convex dual of $g$.

For any $0\leq t \leq s\leq T$ and any $\mathcal{F}_s$-measurable and square-integrable random variable $\xi$, we have the following representation for the $g$-conditional expectation at time $t$
	\begin{equation}
	\label{repr-g}
	\mathcal{E}_{t,s}^g[\xi]=\essinf_{\theta\in D_g}\big\{\E^{\P^\theta}_t[\xi]+\alpha_{t,s}(\theta)\big\},
	\end{equation}
	where the penalty function is
	\begin{displaymath}
	\alpha_{t,s}(\theta):=\E^{\P^\theta}_t\bigg[\int_t^s f(r,\theta_r) \d r\bigg],
	\end{displaymath}
	and where $\E^{\P^\theta}_t[\,\cdot\,]$ is the conditional expectation taken under the probability measure $\P^\theta$ such that
	\begin{displaymath}
	\frac{\d\P^\theta}{\d\P_0}\Big|_{\mathcal{F}_t}=\exp\Big(\int_0^t \theta_s \d B_s-\frac{1}{2}\int_0^t \theta_s^2 \d s\Big), \quad 0 \leq t \leq T.
	\end{displaymath}
\end{proposition}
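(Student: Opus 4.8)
The plan is to combine the BSDE characterization of the $g$-conditional expectation with the Fenchel--Moreau duality for the concave map $z\mapsto g(t,\omega,z)$. Denote by $(Y,Z)$ the unique square-integrable solution of the BSDE with terminal datum $\xi$ at time $s$ and driver $g$, so that $\mathcal{E}^g_{u,s}[\xi]=Y_u$ and
\begin{equation*}
Y_u=\xi+\int_u^s g(r,Z_r)\,\d r-\int_u^s Z_r\,\d B_r,\qquad t\le u\le s.
\end{equation*}
Since $g(t,\omega,\cdot)$ is concave and $\kappa$-Lipschitz, its conjugate $f(t,\omega,\cdot)$ is finite only on $[-\kappa,\kappa]$, and the biconjugate theorem yields the pointwise identity $g(t,\omega,z)=\inf_{\theta\in[-\kappa,\kappa]}\big(z\theta+f(t,\omega,\theta)\big)$. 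In particular any $\theta\in D_g$ is valued in $[-\kappa,\kappa]$, $\d\P_0\otimes\d t$-a.e., so each exponential density $\d\P^\theta/\d\P_0$ is a true martingale with moments of every order and all the changes of measure used below are harmless.

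For the inequality ``$\le$'' fix $\theta\in D_g$ and set $B^\theta_u:=B_u-\int_0^u\theta_r\,\d r$, a Brownian motion under $\P^\theta$ by Girsanov's theorem. Rewriting the BSDE under $\P^\theta$,
\begin{equation*}
Y_u=\xi+\int_u^s\big(g(r,Z_r)-Z_r\theta_r\big)\,\d r-\int_u^s Z_r\,\d B^\theta_r,
\end{equation*}
and $g(r,Z_r)-Z_r\theta_r\le f(r,\theta_r)$ by definition of $f$. After localizing the stochastic integral by the stopping times at which $\int_t^{\cdot}Z_r^2\,\d r$ first exceeds $n$ — so that it is a genuine $\P^\theta$-martingale — and removing the localization using the standard $L^2$-bounds on $(Y,Z)$ and uniform integrability, taking $\E^{\P^\theta}_u[\,\cdot\,]$ gives $Y_u\le\E^{\P^\theta}_u[\xi]+\alpha_{u,s}(\theta)$; the essential infimum over $\theta\in D_g$ then yields one direction (with $u=t$).

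The reverse inequality is the crux and requires exhibiting an attaining kernel. By a measurable selection argument I would pick a progressively measurable $\theta^\star$ with $\theta^\star_r\in\partial_z g(r,\omega,Z_r(\omega))$ (superdifferential in the spatial variable), equivalently $g(r,Z_r)=Z_r\theta^\star_r+f(r,\theta^\star_r)$. The Lipschitz bound forces $|\theta^\star|\le\kappa$, and since $|g(r,Z_r)|\le|g(r,0)|+\kappa|Z_r|$ with $\E\int_0^s|g(r,0)|^2\,\d r<\infty$ and $Z\in L^2(\d\P_0\otimes\d t)$, one checks $f(\cdot,\theta^\star_{\cdot})\in L^2(\d\P_0\otimes\d t)$, so $\theta^\star\in D_g$. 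For this choice every inequality in the previous step becomes an equality, so $Y_u=\E^{\P^{\theta^\star}}_u[\xi]+\alpha_{u,s}(\theta^\star)\ge\essinf_{\theta\in D_g}\{\E^{\P^\theta}_u[\xi]+\alpha_{u,s}(\theta)\}$, and combining the two bounds proves \eqref{repr-g}.

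The obstacle I expect is technical rather than conceptual and has two parts: the progressively measurable selection of $\theta^\star$ inside the superdifferential of $g(r,\cdot)$ evaluated at the random point $Z_r$ (handled via a measurable selection theorem, or an explicit one-sided-derivative formula, exploiting joint measurability of $g$ and progressivity of $Z$), and the localization/uniform-integrability bookkeeping needed to push the conditional expectations through the $\P^\theta$-stochastic integrals. Both become routine once one knows all admissible kernels are bounded by $\kappa$; the overall scheme is the one in \cite{EPQ}.
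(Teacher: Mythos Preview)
Your argument is correct and is precisely the standard route from \cite{EPQ}: Fenchel--Moreau duality for the concave driver, Girsanov to linearize the BSDE under each $\P^\theta$, and a measurable selection in the superdifferential $\partial_z g(\cdot,Z_\cdot)$ to exhibit an attaining kernel. The paper does not give its own proof of this proposition but simply recalls it with a reference to \cite{EPQ}, so your proposal matches the intended source exactly; the two technical points you flag (measurable selection and localization under the bounded kernels) are handled there just as you outline.
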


On the other hand, because of the ambiguity-aversion, the evaluation of an uncertain cost $\eta$ is given by $\mathcal{E}^{\widetilde{g}}[\eta]$, where $\widetilde{g}(t,z):=-g(t,-z)$. It is easy to check that $\mathcal{E}^{\widetilde{g}}[\eta]=-\mathcal{E}^g[-\eta]$. For example, if $g(t,z)=-\kappa|z|$, we set $\mathcal{E}^g[\xi]:=\mathcal{E}^{-\kappa}[\xi]$ and $\mathcal{E}^{\widetilde{g}}[\xi]:=\mathcal{E}^\kappa[\xi]$. In fact, by Proposition \ref{prop:A1}, if the function $g$ satisfies (i)-(iii), the $g$-expectation corresponds to a variational preference. Furthermore, if the function $g$ also satisfies (iv), then the $g$-expectation fulfills almost all the properties of the classical expectation, such as translation invariance, local property, with the exception of linearity (see Appendix \ref{Appendix} for details).

\begin{remark}\label{re}
Suppose that $g$ satisfies  Assumption \ref{a2}.  By the comparison theorem for BSDEs, for any $\mathcal{F}_T$-measurable and square integrable random variable $\xi$, we have
$$\mathcal{E}^g[\xi]\leq \mathcal{E}^\kappa[\xi], \ \ \mathcal{E}^{\widetilde{g}}[\xi]\leq \mathcal{E}^\kappa[\xi].$$
\end{remark}

Denoting by $\mathcal{A}_g$ the collection of all the investment plans such that
\begin{equation}
\label{eq:constraint}
\mathcal{E}^{\widetilde{g}}\bigg[\int_0^T e^{-rt} \d I_t\bigg]<\infty,
\end{equation}
and defining, for any $I \in \mathcal{A}_g$,
\begin{equation}
	\label{ii}
	\Pi(I):=\mathcal{E}^{g}\bigg[\int_0^T e^{-rt}\pi (X_t,C_t^I) \d t- \int_0^T e^{-rt} \d I_t\bigg],
	\end{equation}
the firm aims at solving the problem
	\begin{equation}
	\label{OC}
	V^{\star}:=\sup_{I \in \mathcal{A}_g}\Pi(I)
	\end{equation}
	In the following, any investment plan belonging to $\mathcal{A}_g$ will be called \emph{admissible}. Notice that, under \eqref{eq:constraint}, $V^{\star}$ is well defined, even if possibly infinite. It is also worth stressing that no Markovian assumption has been made.
	

\section{Existence and Uniqueness of the Optimal Investment Plan}
\label{sec:existence}

In this section we show that there exists a unique irreversible investment plan that is optimal for problem \eqref{OC}. In order to accomplish that, we first need to impose some integrability condition on the underlying stochastic process $X$. For this purpose, for each fixed $x \in E$, $r>0$ and $\delta\geq 0$, define
\begin{equation}
\label{pistar}
\pi^{\star}(x,r,\delta):=\sup_{c\geq 0}\big(\pi(x,c)-(r+\delta)c\big),
\end{equation}
and denote by $c^{\star}(x,r,\delta)$ the unique maximizer of $c\mapsto \pi(x,c)-(r+\delta)c$ which, by Assumption \ref{a1}, is completely characterized by the first-order condition
\begin{equation}
\label{cstar}
\pi_c\big(x,c^{\star}(x,r,\delta)\big)=r+\delta.
\end{equation}
Then we make the following assumption.
	\begin{assumption}
	\label{a3}
	\hspace{10cm}
	 	\begin{itemize}
	 	\item[(H1)] For any $t\in[0,T]$, $\E\big[|\pi^{\star}\big(X_t,r,\delta\big)|^2\big]<\infty$;
		
	 	\item[(H2)] $\E\big[\sup_{s\in[0,T]}|c^{\star}(X_s,r,\delta)|^2\big]<\infty$.
	 \end{itemize}
	\end{assumption}

\begin{remark}\label{remark3.2}
The auxiliary function $\pi^{\star}$ defines the maximal profit that the company can achieve if the investment plan were perfectly reversible; in fact, in such a case, the marginal operating profit $\pi_c$ equates to the user cost of capital $r+\delta$.

If the operating profit function is of the form $\pi(x,c)=e^x \frac{c^{1-\alpha}}{1-\alpha}$, with $\alpha\in(0,1)$, and if $X$ is a L\'{e}vy process with bounded positive jumps, then Assumption \ref{a3} holds true (see Example 2.2 in \cite{RS}).
\end{remark}
	
We can now state the existence and uniqueness result for a solution to problem \eqref{OC}.
	\begin{theorem}
	\label{ii1}
	Under Assumptions \ref{a1}, \ref{a2} and \ref{a3}, there exists a unique optimal investment plan $I^{\star}$ for problem \eqref{OC}.
	\end{theorem}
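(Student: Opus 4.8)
The plan is to follow the strategy of \cite{RS}, replacing the classical tools used there (Fatou's lemma, dominated convergence, strict concavity of a linear expectation) by their nonlinear analogues for $g$-expectations. \emph{First}, I would prove the a priori bounds. Writing $\d I_t=\d C^I_t+\delta C^I_t\,\d t$ in \eqref{i2} and integrating by parts gives $\int_0^T e^{-rt}\,\d I_t\ge -c+(r+\delta)\int_0^T e^{-rt}C^I_t\,\d t$, whence, using \eqref{pistar}, the pathwise estimate
\[
\int_0^T e^{-rt}\pi(X_t,C^I_t)\,\d t-\int_0^T e^{-rt}\,\d I_t\le \int_0^T e^{-rt}\pi^\star(X_t,r,\delta)\,\d t+c.
\]
By monotonicity of $\mathcal{E}^g$, Remark \ref{re} and (H1), $\Pi(I)\le\mathcal{E}^\kappa\big[\int_0^T e^{-rt}\pi^\star(X_t,r,\delta)\,\d t+c\big]<\infty$ for all admissible $I$, and since $I\equiv 0\in\mathcal{A}_g$ we obtain $V^\star\in\R$. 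Next, as in \cite{RS}, I would show that the supremum in \eqref{OC} is not affected if one restricts to plans whose capacity satisfies $C^I_t\le \Gamma_t:=c\vee\sup_{s\in[0,t]}c^\star(X_s,r,\delta)$ for all $t$: since $\Gamma_t\ge c^\star(X_t,r,\delta)$ and $c\mapsto\pi(X_t,c)-(r+\delta)c$ is decreasing beyond $c^\star(X_t,r,\delta)$, ``capping'' an arbitrary plan at $\Gamma$ -- an operation that only postpones or cancels investment -- does not decrease the objective random variable \emph{pathwise}, hence (by monotonicity of $\mathcal{E}^g$) it does not decrease $\Pi$; by (H2), $\sup_{t\in[0,T]}\Gamma_t\in L^2(\P_0)$.

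\emph{Second}, I would extract an optimizer. For a maximizing sequence $\{I^n\}_n$ in the restricted class, \eqref{solC} yields $I^n_T\le e^{\delta T}\Gamma_T-c$, so $\sup_n\E[I^n_T]<\infty$ by (H2); the Koml\'{o}s theorem in the form of \cite{K99}, applied to the random measures $\d I^n$ jointly with the terminal values $I^n_T$, produces a subsequence whose Ces\`aro means $\bar I^n:=\frac1n\sum_{k=1}^n I^{n_k}$ converge $\P_0$-a.s.\ to a nondecreasing process; after taking the right-continuous modification and using completeness of $\mathbb{F}$, this is an admissible investment plan $I^\star$ (admissibility passes to the limit via convexity and lower semicontinuity of $\mathcal{E}^{\widetilde g}$) with capacity still dominated by $\Gamma$. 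Since $I\mapsto C^I$ is affine by \eqref{solC}, $\pi(x,\cdot)$ is concave and $\mathcal{E}^g$ is concave and monotone, $\Pi$ is concave, so $\Pi(\bar I^n)\ge\frac1n\sum_{k=1}^n\Pi(I^{n_k})\to V^\star$; on the other hand $C^{\bar I^n}_t\to C^{I^\star}_t$ for a.e.\ $t$ and $\int_0^T e^{-rt}\,\d\bar I^n_t\to\int_0^T e^{-rt}\,\d I^\star_t$ $\P_0$-a.s., all dominated through $\Gamma$, so an upper-semicontinuity (reverse Fatou) property of $\mathcal{E}^g$ gives $\Pi(I^\star)\ge\limsup_n\Pi(\bar I^n)\ge V^\star$; as $I^\star$ is admissible, $\Pi(I^\star)=V^\star$.

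\emph{Third}, for uniqueness, suppose $I^1\ne I^2$ are both optimal and put $\bar I:=\frac12(I^1+I^2)\in\mathcal{A}_g$. Since $I\mapsto C^I$ is injective, $C^{I^1}\ne C^{I^2}$ on a set of positive $\d t\otimes\d\P_0$-measure, so by strict concavity of $\pi(X_t,\cdot)$ the nonnegative random variable $\int_0^T e^{-rt}\pi(X_t,C^{\bar I}_t)\,\d t-\frac12\sum_{i=1}^2\int_0^T e^{-rt}\pi(X_t,C^{I^i}_t)\,\d t$ is strictly positive with positive probability, while the investment-cost term is linear in $I$. Hence the objective random variable of $\bar I$ dominates the half-sum of those of $I^1$ and $I^2$, strictly on a set of positive probability; by the strict comparison theorem for BSDEs $\mathcal{E}^g$ is strictly monotone, and by concavity $\mathcal{E}^g$ of a half-sum is at least the half-sum of the $\mathcal{E}^g$'s, so $\Pi(\bar I)>\frac12(\Pi(I^1)+\Pi(I^2))=V^\star$, contradicting the definition of $V^\star$. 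Therefore $I^\star$ is unique.

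I expect the main obstacle to be the second step: pushing the a.s./pointwise Koml\'{o}s convergence through the nonlinear functional $\Pi$. This needs dominated-convergence and reverse-Fatou statements for $g$-expectations -- which rely on a priori $L^2$-estimates and stability of BSDEs under convergence of the terminal data, with the representation of Proposition \ref{prop:A1} providing the uniform control via the family $\{\P^\theta\}$ -- and it needs care with a possible atom of $I^\star$ at the terminal time $T$, which forces the Koml\'{o}s selection to be performed simultaneously for the running processes and for their terminal values. The reduction to the bounded class in the first step, itself a nontrivial pathwise ``no over-investment'' argument adapted from \cite{RS}, is precisely what makes the required domination available.
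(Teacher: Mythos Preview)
Your proposal is correct and follows essentially the same route as the paper: the a~priori bound via $\pi^\star$ (Lemma~\ref{ii2}), reduction to capacities dominated by an $L^2$ running-supremum process (Lemma~\ref{ii3}, where the paper uses the cap $\widehat C_t=e^{-\delta t}\big[c+\sup_{s\le t}(c^\star_s e^{\delta s})\big]$ and argues via the dual representation of Proposition~\ref{prop:A1}, while your pathwise argument with $\Gamma$ is equally valid and in fact slightly more direct), then Koml\'os/Ces\`aro compactness combined with concavity of $\Pi$ and the reverse-Fatou property of $\mathcal{E}^g$ from Proposition~\ref{a21} to pass to the limit, and uniqueness via strict comparison. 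The only minor point is that admissibility of the limit $I^\star$ is obtained in the paper not through lower semicontinuity of $\mathcal{E}^{\widetilde g}$ but directly from the inherited bound $C^{I^\star}\le\widehat C$ (cf.\ \eqref{i1}), which is the cleaner route once the reduction step is in place.
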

	
The previous theorem extends Theorem 2.3 in \cite{RS} in our general setting under Knightian uncertainty. Since its proof is developed through some auxiliary lemmata, we first discuss its main idea. The uniqueness of the optimal investment plan can be derived from the strict concavity of $\pi(x,\cdot)$, the affine structure of the production capacity with respect to the investment, and the strict comparison theorem for $g$-expectation.

In order to prove existence, a usual argument is to choose a suitable converging subsequence, which should exist by compactness of the set of admissible controls. Then, by the continuity of the payoff functional, the limit of such a subsequence is indeed optimal (whenever admissible). In our case, the main difficulty that we face is that it seems hard to find a topology ensuring at the same time compactness of the admissible set $\mathcal{A}_g$ and continuity of the payoff functional $\Pi$. To overcome this problem, as in \cite{RS} we first prove that it suffices to consider only those investment plans whose induced production capacity stays bounded from above by the overall maximal capacity under perfect reversibility (cf.\ Lemma \ref{ii3} below). The latter is integrable by Assumption \ref{a3}. Then, by choosing a maximizing sequence $\{I^n\}_{n\in\mathbb{N}}$ and applying the Koml\'{o}s' theorem (as in \cite{K99}), we conclude that there exists a subsequence $\{I_{n_k}\}_{k\in\mathbb{N}}$ whose arithmetic average suitably converges to some $I^{\star}$. Finally, it turns out that such $I^{\star}$ is indeed the optimal policy.
	
\begin{lemma}
	\label{ii2}
	Recall \eqref{OC}. Under Assumptions \ref{a1}, \ref{a2}, and \ref{a3}, the value $V^{\star}$ is finite.
	\end{lemma}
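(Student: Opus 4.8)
The plan is to show that the value $V^\star$ is finite by exhibiting an upper bound that depends only on the economic shock $X$ and the model primitives, and which is finite under Assumption \ref{a3}. The key observation is that for \emph{any} admissible investment plan $I \in \mathcal{A}_g$, the integrand $\pi(X_t, C_t^I)$ can be controlled pathwise by decoupling the profit term from the cost term. Concretely, for each fixed $t$ and each $\omega$, the definition \eqref{pistar} of $\pi^\star$ gives the pointwise inequality
\begin{displaymath}
\pi(X_t, C_t^I) \leq \pi^\star(X_t, r, \delta) + (r+\delta) C_t^I.
\end{displaymath}
Hence the payoff inside the nonlinear expectation in \eqref{ii} is bounded above by
\begin{displaymath}
\int_0^T e^{-rt}\pi^\star(X_t,r,\delta)\,\d t + \int_0^T e^{-rt}(r+\delta)C_t^I\,\d t - \int_0^T e^{-rt}\,\d I_t.
\end{displaymath}
The second step is to handle the last two terms together by an integration-by-parts argument on $\int_0^T e^{-rt}(r+\delta)C_t^I\,\d t - \int_0^T e^{-rt}\,\d I_t$, using the capacity dynamics \eqref{i2}. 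Writing $\d C_t^I = -\delta C_t^I\,\d t + \d I_t$ and integrating $e^{-rt}\,\d C_t^I$ by parts, one finds that $\int_0^T e^{-rt}\,\d I_t = \int_0^T e^{-rt}\,\d C_t^I + \delta \int_0^T e^{-rt}C_t^I\,\d t$, and then $\int_0^T e^{-rt}\,\d C_t^I = e^{-rT}C_T^I - c + r\int_0^T e^{-rt}C_t^I\,\d t$. Substituting, the two capacity-related terms collapse and we are left with $-(e^{-rT}C_T^I - c) = c - e^{-rT}C_T^I \leq c$, since $C_T^I \geq 0$. Therefore the payoff inside $\mathcal{E}^g[\cdot]$ is dominated, pathwise and uniformly in $I$, by the random variable $\Lambda := c + \int_0^T e^{-rt}\pi^\star(X_t,r,\delta)\,\d t$.

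The third step is to pass this pathwise bound through the nonlinear expectation. By monotonicity of $g$-expectation (Appendix \ref{Appendix}), $\Pi(I) \leq \mathcal{E}^g[\Lambda]$, and since $\Lambda$ is deterministic plus an integral of $X$, and in fact $\Lambda$ does not depend on $I$, taking the supremum over $I$ gives $V^\star \leq \mathcal{E}^g[\Lambda]$. It remains to check that $\mathcal{E}^g[\Lambda] < \infty$. Using Remark \ref{re} (or directly the representation \eqref{repr-g}), $\mathcal{E}^g[\Lambda] \leq \mathcal{E}^\kappa[\Lambda]$, which in turn is controlled by $\sup_{\theta}\E^{\P^\theta}[\Lambda]$ up to a penalty term; alternatively one invokes the standard estimate that a square-integrable terminal value has finite $g$-expectation under Assumption \ref{a2}. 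The square-integrability of $\Lambda$ follows from (H1): by Jensen/Cauchy--Schwarz, $\E\big[\big(\int_0^T e^{-rt}\pi^\star(X_t,r,\delta)\,\d t\big)^2\big] \leq T\int_0^T e^{-2rt}\E\big[|\pi^\star(X_t,r,\delta)|^2\big]\,\d t < \infty$. For the lower bound, $V^\star \geq \Pi(0) = \mathcal{E}^g\big[\int_0^T e^{-rt}\pi(X_t, ce^{-\delta t})\,\d t\big] > -\infty$ (indeed $\geq 0$, since $\pi \geq 0$ and $\mathcal{E}^g$ preserves nonnegativity), so $V^\star$ is finite.

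The main obstacle I anticipate is not the pathwise estimate, which is essentially the same decoupling used in \cite{RS}, but rather making the passage through the nonlinear expectation fully rigorous: one must ensure that the dominating random variable $\Lambda$ lies in the domain on which $\mathcal{E}^g$ is defined and monotone (i.e.\ square-integrable under $\P_0$), and one must be careful that the integration-by-parts identity for $\int_0^T e^{-rt}\,\d I_t$ correctly accounts for a possible initial lump-sum investment (the jump of $I$ at $0$), consistently with the convention $\int_0^T(\cdot)\,\d I_t = \int_{[0,T]}(\cdot)\,\d I_t$ fixed after \eqref{solC}. Since admissibility in \eqref{eq:constraint} only requires $\mathcal{E}^{\widetilde g}\big[\int_0^T e^{-rt}\,\d I_t\big] < \infty$ and not square-integrability of the investment itself, the argument must avoid estimating $\int_0^T e^{-rt}\,\d I_t$ in isolation — which is precisely why the integration by parts that cancels it against the capacity term is the crucial manoeuvre.
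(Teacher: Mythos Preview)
Your proof is correct and follows essentially the same route as the paper: the same integration-by-parts identity (the paper's \eqref{i8}) combined with the pointwise bound $\pi(X_t,C_t^I)-(r+\delta)C_t^I\le \pi^\star(X_t,r,\delta)$ yields the uniform pathwise dominant $\Lambda=c+\int_0^T e^{-rt}\pi^\star(X_t,r,\delta)\,\d t$, and then monotonicity of $\mathcal{E}^g$ together with (H1) gives finiteness. The only cosmetic difference is that the paper passes through $\mathcal{E}^\kappa$ and splits the time integral before applying BSDE estimates, whereas you verify $\Lambda\in L^2(\P_0)$ directly; both are equally valid.
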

	
	\begin{proof}
		Simple manipulations on \eqref{i2} imply that
		\begin{equation}
		\label{i8}
		\begin{split}
		\int_0^T e^{-rt} \d I_t=&\int_0^T e^{-rt} \d C_t^I+\int_0^T \delta e^{-rt}C_t^I \d t\\
		=&e^{-rT}C_T^I - c + \int_0^T (\delta+r) e^{-rt} C_t^I \d t.
		\end{split}
		\end{equation}
	It thus follows that
		\begin{equation}
		\label{i0}
		\begin{split}
		& \int_0^T e^{-rt}\pi (X_t,C_t^I) \d t- \int_0^T e^{-rt} \d I_t \leq \int_0^T e^{-rt}\big(\pi (X_t,C_t^I) -(\delta+r)C_t^I\big) \d t + c\\
		& \leq \int_0^T e^{-rt} \pi^{\star}(X_t,r,\delta) \d t + c.
		\end{split}
		\end{equation}
		
		Now, standard estimates for BSDEs and the comparison theorem (see \cite{EPQ} and Proposition \ref{a21} in Appendix \ref{Appendix})) give that for any $I\in\mathcal{A}_g$ and for some $M>0$
		\begin{align*}
		\Pi(I) & \leq \mathcal{E}^{g}\bigg[\int_0^T e^{-rt} \pi^{\star}(X_t,r,\delta) \d t+ c\bigg] \leq \mathcal{E}^{\kappa}\bigg[\int_0^T \pi^{\star}(X_t,r,\delta) \d t+ c\bigg]\\
		&\leq \int_0^T \mathcal{E}^\kappa\big[\pi^{\star}(X_t,r,\delta)\big]\d t + c \leq M\Big(1 + \int_0^T \E\big[(\pi^{\star}(X_t,r,\delta))^2\big]^{\frac{1}{2}} \d t \Big) <\infty,
		\end{align*}
		where (H1) of Assumption \ref{a3} has been used. Since the last term on the right-hand side of the latter equation is independent of $I\in\mathcal{A}_g$, the desired result follows.
	\end{proof}

	
We now claim that we can restrict our analysis to those investment plans whose associated capacity is required to be below some overall maximal capacity under perfect reversibility. More precisely, recall $c^{\star}$ solving \eqref{cstar} and set
	\begin{equation}
	\label{Chat}
	\widehat{C}_t:=e^{-\delta t}\big[c + \sup_{s\in[0,t]}(c_s^{\star} e^{\delta s})\big],
	\end{equation}
	where, for simplicity, we have put $c_s^{\star}:=c^{\star}(X_s,r,\delta)$. Note that for any $t\in[0,T]$,
	$$
	\widehat{C}_t= ce^{-\delta t} + \sup_{s\in[0,t]}(c_s^{\star} e^{\delta(s-t)})\leq c + \sup_{s\in[0,t]}c_s^{\star} \leq c + \sup_{s\in[0,T]}c_s^{\star}.
	$$
	Then the investment plan corresponding to $\widehat{C}$, i.e.
	$$
	\widehat{I}_t=\widehat{C}_t - c + \int_0^t \delta \widehat{C}_s \d s, \quad t \geq 0,
  $$
	is admissible due to the fact that, for some $M>0$ (changing from line to line),
	\begin{equation}
	\label{i1}
	\mathcal{E}^{\widetilde{g}}\bigg[\int_0^T e^{-rt} \d\widehat{I}_t\bigg]\leq \mathcal{E}^\kappa\big[\widehat{I}_T\big]\leq M \mathcal{E}^\kappa\big[\sup_{s\in[0,T]}|c_s^{\star}|\big]\leq M \E\big[\sup_{s\in[0,T]}|c^{\star}_s|^2\big]^{\frac{1}{2}}<\infty.
	\end{equation}
	Here, the comparison theorem (cf.\ Proposition \ref{a21} in Appendix \ref{Appendix}) and standard estimates for BSDEs (cf.\ \cite{EPQ}), as well as (H2) of Assumption \ref{a3}, have been employed.

	\begin{lemma}
	\label{ii3}
		Set $\widehat{\mathcal{A}}_g:=\{I:\, I \in \mathcal{A}_g\,\,\text{such that}\,\,C^I_t \leq \widehat{C}_t\,\,\P_0\text{-a.s.}\,\,\forall t\geq 0\}$. Then we have
		\begin{displaymath}
		\sup_{I\in\mathcal{A}_g}\mathcal{E}^{g}\bigg[\int_0^T e^{-rt}\pi (X_t,C_t^I) \d t - \int_0^T e^{-rt} \d I_t\bigg]=
		\sup_{I\in\widehat{\mathcal{A}}_g}\mathcal{E}^{g}\bigg[\int_0^T e^{-rt}\pi (X_t,C_t^I) \d t - \int_0^T e^{-rt} \d I_t\bigg].
		\end{displaymath}
	\end{lemma}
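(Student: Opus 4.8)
The plan is to prove the two inequalities separately, the nontrivial one being ``$\leq$''. For that direction, I would take an arbitrary admissible $I\in\mathcal{A}_g$ and construct a ``truncated'' investment plan $I'\in\widehat{\mathcal{A}}_g$ whose induced capacity never exceeds $\widehat{C}$, and which does at least as well in the payoff. The natural candidate is to set $C' := C^I \wedge \widehat{C}$ (the pointwise minimum) and to let $I'$ be the investment plan that generates $C'$ via \eqref{i2}; one must first check that $C'$ is indeed the capacity of a \emph{legitimate} (nondecreasing, right-continuous, adapted, $I'_{0^-}=0$) investment plan, which follows from the fact that $C' = e^{-\delta t}[c + \int_0^t e^{\delta s}\d I'_s]$ forces $\d I'_t = \d C'_t + \delta C'_t \d t$, and the minimum of $C^I$ with the process $\widehat C$ (whose ``investment'' $\widehat I$ is nondecreasing) inherits enough regularity; here I would invoke a Lebesgue--Stieltjes/minimum-of-two-processes argument to see $I'$ is nondecreasing. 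Admissibility of $I'$ then follows because $C'\leq \widehat C$ gives, via the identity \eqref{i8}, the bound $\int_0^T e^{-rt}\d I'_t \leq e^{-rT}\widehat C_T - c + \int_0^T(\delta+r)e^{-rt}\widehat C_t\d t \leq M\sup_{s\in[0,T]}|c^\star_s| + \text{const}$, which is $\mathcal{E}^{\widetilde g}$-integrable by the monotonicity of $\mathcal{E}^{\widetilde g}$, Remark \ref{re}, and (H2) of Assumption \ref{a3}.

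The core of the argument is then the pathwise inequality
\[
\int_0^T e^{-rt}\pi(X_t,C'_t)\,\d t - \int_0^T e^{-rt}\,\d I'_t \ \geq\ \int_0^T e^{-rt}\pi(X_t,C^I_t)\,\d t - \int_0^T e^{-rt}\,\d I_t \quad \P_0\text{-a.s.},
\]
which by monotonicity of $\mathcal{E}^g$ (Proposition \ref{a21}) immediately yields $\Pi(I')\geq\Pi(I)$ and hence the claimed inequality of suprema. To prove this pathwise, I would split $[0,T]$ according to whether $C^I_t \leq \widehat C_t$ or not. On the set where $C^I_t\leq\widehat C_t$ we have $C'_t=C^I_t$, so profit rates agree there; on the set where $C^I_t>\widehat C_t$ we have $C'_t=\widehat C_t$, and because $\pi(X_t,\cdot)$ is increasing the profit is smaller under $C'$ — so the operating-profit term can only \emph{decrease}. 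The gain must therefore come from the cost term: whenever $I$ pushes capacity above $\widehat C$, the truncated plan $I'$ avoids (or defers) that spending. The clean way to package this is again via the identity \eqref{i8}: writing both sides in the form $-c + e^{-rT}C_T + \int_0^T(\delta+r)e^{-rt}C_t\,\d t$ for the cost and comparing, the desired inequality reduces to
\[
\int_0^T e^{-rt}\big[\pi(X_t,C'_t)-(r+\delta)C'_t\big]\d t \;-\; e^{-rT}C'_T \ \geq\ \int_0^T e^{-rt}\big[\pi(X_t,C^I_t)-(r+\delta)C^I_t\big]\d t \;-\; e^{-rT}C^I_T .
\]
Now I would use the defining property of $c^\star_t$: the function $c\mapsto \pi(X_t,c)-(r+\delta)c$ is strictly concave with maximizer $c^\star_t\leq \widehat C_t$; since $\widehat C_t \leq C^I_t$ on the relevant set, the map $c\mapsto \pi(X_t,c)-(r+\delta)c$ is decreasing on $[\widehat C_t,\infty)$, so replacing $C^I_t$ by the smaller value $C'_t=\widehat C_t$ increases this integrand, while the terminal term $-e^{-rT}C^I_T \leq -e^{-rT}C'_T$ also moves in the favorable direction. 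Both effects point the same way, giving the inequality.

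The reverse inequality ``$\geq$'' is immediate because $\widehat{\mathcal{A}}_g\subseteq\mathcal{A}_g$ (the plan $\widehat I$ is admissible by \eqref{i1}, and any $I\in\widehat{\mathcal A}_g$ satisfies the constraint \eqref{eq:constraint} since its cost is dominated by that of $\widehat I$ via \eqref{i8} and monotonicity of $\mathcal{E}^{\widetilde g}$), so the supremum over the smaller set cannot exceed the supremum over the larger one. I expect the main obstacle to be the measure-theoretic bookkeeping in showing that $C':=C^I\wedge\widehat C$ genuinely corresponds to an admissible investment plan — in particular that $\d I'= \d C' + \delta C'\d t$ defines a nonnegative (hence nondecreasing $I'$) Lebesgue--Stieltjes measure, handling the right-continuity and the possible initial jump, and carefully justifying the decomposition of $[0,T]$ into the two (random, $\d t$- and $\d I'$-measurable) sets used in the pathwise comparison. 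Everything else is a direct consequence of concavity of $\pi$, the identity \eqref{i8}, the monotonicity and comparison properties of $g$-expectation recalled in Appendix \ref{Appendix}, and Assumption \ref{a3}.
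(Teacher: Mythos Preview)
Your approach is essentially the same as the paper's: truncate to $C':=C^I\wedge\widehat C$, rewrite the cost via \eqref{i8}, and compare the integrand $\tilde\pi(X_t,c)=\pi(X_t,c)-(r+\delta)c$ using that $\widehat C_t\ge c^\star_t$ sits to the right of the maximizer. The one presentational difference is that you establish the pathwise inequality first and then invoke monotonicity of $\mathcal{E}^g$, whereas the paper passes through the variational representation \eqref{repr-g} and uses $\inf f-\inf g\ge\inf(f-g)$; your route is slightly more direct, and the paper's only buys the observation (recorded in the remark following the proof) that condition (iv) of Assumption~\ref{a2} is not needed --- which your argument also avoids, since the comparison theorem in Proposition~\ref{a21} requires only (i)--(iii). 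Your identified ``main obstacle'' is handled in one line: $e^{\delta t}C'_t=\min\{e^{\delta t}C^I_t,\,e^{\delta t}\widehat C_t\}$ is the minimum of two nondecreasing, right-continuous, adapted processes (by \eqref{solC} and \eqref{Chat}), hence itself nondecreasing, so $\d I'_t=e^{-\delta t}\,\d(e^{\delta t}C'_t)$ defines a bona fide investment plan.
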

	
	\begin{proof}
		It suffices to prove that for any $I\in\mathcal{A}_g$, we can find some $\bar{I}\in\widehat{\mathcal{A}}_g$ such that $\Pi(\bar{I})\geq \Pi(I)$. Let $I\in\mathcal{A}_g$ be given and fixed with associated capacity denoted, for simplicity, by $C$. Set $\bar{C}_t:=\min\{C_t,\widehat{C}_t\}$ and $\bar{A}_t:=e^{\delta t}\bar{C}_t$. Then, $\bar{C}$ is a production capacity associated to the investment plan $\bar{I}_t:=\int_0^t e^{-\delta s}\d\bar{A}_s$. Clearly, $\bar{I}\in\widehat{\mathcal{A}}_g$. We claim that $\Pi(\bar{I})\geq \Pi(I)$. By \eqref{i8} and the representation of $g$-expectation \eqref{repr-g} of Proposition \ref{prop:A1}, we obtain that
		\begin{align*}
		&\Pi(\bar{I})- \Pi(I)\\
		=&\mathcal{E}^{g}\bigg[\int_0^T e^{-rt}\big(\pi (X_t,\bar{C}_t)-(\delta+r)\bar{C}_t\big) \d t - e^{-rT}\bar{C}_T + c \bigg]\\
		&-\mathcal{E}^{g}\bigg[\int_0^T e^{-rt}\big(\pi (X_t,C_t)-(\delta+r)C_t\big) \d t -e^{-rT}C_T + c \bigg]\\
			=&\inf_{\theta\in D} \Big(\E^\theta\bigg[\int_0^T e^{-rt}\big(\pi (X_t,\bar{C}_t)-(\delta+r)\bar{C}_t\big)\d t-e^{-rT}\bar{C}_T\bigg]+\alpha_{0,T}(\theta)\Big)\\
		&-\inf_{\theta\in D} \Big(\E^\theta\bigg[\int_0^T e^{-rt}\big(\pi (X_t,C_t)-(\delta+r)C_t\big) \d t-e^{-rT}C_T\bigg]+\alpha_{0,T}(\theta)\Big)\\
		\geq &\inf_{\theta\in D} \E^\theta\bigg[\int_0^T e^{-rt}\big(\widetilde{\pi}(X_t,\bar{C}_t)-\widetilde{\pi}(X_t,C_t)\big) \d t-e^{-rT}\big(\bar{C}_T-C_T\big)\bigg],
		\end{align*}
		where $\widetilde{\pi}(x,c):=\pi(x,c)-(\delta+r)c$ is concave with respect to $c$. Now, on the set of times for which $C_t>\bar{C}_t=\widehat{C}_t \geq c_t^{\star}$, we have
		$$\widetilde{\pi}(X_t,\bar{C}_t)\geq \widetilde{\pi}(X_t,C_t),$$
		upon recalling that $c^{\star}$ attains the maximum of the concave function $\widetilde{\pi}(x,\cdot)$ (cf.\ \eqref{cstar}).
		On the other hand, on the set of times for which $C_t=\bar{C}_t$, it follows that $\widetilde{\pi}(X_t,\bar{C}_t)=\widetilde{\pi}(X_t,C_t)$. This, together with the fact that $\bar{C}_T\leq C_T$ $\P$-a.s.\ (hence, $\P^{\theta}$-a.s.\ for any $\theta \in D$) finally implies that $\Pi(\bar{I})\geq \Pi(I)$.
	\end{proof}
	
	\begin{remark}
	Notice that the result of Lemma \ref{ii3} still holds removing the requirement $g(t,\omega,0)=0$, for any $(t,\omega)\in[0,T]\times \Omega$.
	\end{remark}
	
	We are now finally able to prove Theorem \ref{ii1}.
	
	\begin{proof}[Proof of Theorem \ref{ii1}]
		We first show uniqueness. Assume that $I^1$ and $I^2$ are two distinguishable admissible optimal investment plans. Obviously, also $I:=\frac{1}{2}(I^1+I^2)$ is an admissible investment. By \eqref{i2}, we have $C^I_t=\frac{1}{2}(C_t^{I^1}+C_t^{I^2})$. Since $\pi(x,\cdot)$ is strictly concave, $\mathcal{E}^{g}[\cdot]$ is concave and satisfies the strict comparison theorem (cf.\ Proposition \ref{a21} in Appendix \ref{Appendix}), we can write
		\begin{align*}
		&\mathcal{E}^{g}\bigg[\int_0^T e^{-rt}\pi(X_t,C_t^I) \d t- \int_0^T e^{-rt} \d I_t\bigg]\\
		=&\mathcal{E}^{g}\bigg[\int_0^T e^{-rt}\pi(X_t,\frac{1}{2}(C_t^{I^1}+C_t^{I^2}))\d t- \int_0^T\frac{1}{2}e^{-rt}\big(\d I^1_t+ \d I_t^2\big)\bigg]\\
		>&\mathcal{E}^{g}\bigg[\frac{1}{2}\Big(\int_0^T e^{-rt}\pi(X_t,C_t^{I^1})\d t - \int_0^T e^{-rt} \d I^1_t +\int_0^T e^{-rt}\pi(X_t,C_t^{I^2})\d t - \int_0^T e^{-rt} \d I^2_t\Big)\bigg]\\
		\geq &\frac{1}{2}\Big(\Pi(I^1) + \Pi(I^2)\Big),
		\end{align*}
		which contradicts the optimality of $I^1$ and $I^2$. Hence, the optimal investment plan is unique (whenever it exists).
		\vspace{0.15cm}
		
		We are now in the position to show the existence. By Lemma \ref{ii3}, it is sufficient to consider only those $I$ such that $I\in\widehat{\mathcal{A}}_g$ since
		$$V^{\star}=\sup_{I\in\widehat{\mathcal{A}}_g} \Pi(I).$$
		Let $\{I^n\}_{n\in \mathbb{N}}\subset \widehat{\mathcal{A}}_g$ be a maximizing sequence; that is, such that $V^{\star}=\lim_{n\rightarrow\infty}\Pi(I^n)$. By Equation \eqref{i1} and noting that $I^n_T\leq \widehat{I}_T$ for any $n\in\mathbb{N}$, we have
		$$
		\sup_{n}\E^{\theta}[I^n_T]\leq \E^{\P^\theta}[\widehat{I}_T]\leq \mathcal{E}^\kappa[\widehat{I}_T]<\infty.
		$$
		for any $\P^{\theta}$ such that $\theta$ is progressively measurable and $|\theta|\leq \kappa$..
		Applying the Koml\'{o}s theorem (in the version of \cite{K99}; see \cite{K67} for the classical version), we can choose a subsequence, still denoted by $\{I^n\}_{n\in \mathbb{N}}$, such that the induced (optional) Borel-measure on $[0,T]$ converges in the Ces\'aro sense weakly to some measure $\d I^{\star}$ $\P^\theta$-a.s.; that is, by Portmanteau Theorem,
		$$\lim_{n\rightarrow\infty} J_t^n:=\lim_{n\rightarrow\infty}\frac{1}{n}\sum_{k=1}^n I_t^k = I_t^{\star},$$
		for all continuity points of $t \mapsto I_t^{\star}$ and for $t=T$ $\P^\theta$-a.s.
		Since $\P^\theta$ and $\P_0$ are equivalent, the above equation also holds $\P_0$-a.s. It thus follows that
		$$C^{I^{\star}}_t=\lim_{n\rightarrow\infty} C^{J^n}_t = \lim_{n\rightarrow\infty}\frac{1}{n}\sum_{k=1}^n C^{I^k}_t,$$
	for all continuity points of $t \mapsto C^{I^{\star}}_t$ and for $t=T$ $\P_0$-a.s. Combined with the fact that $C^{I^k}_t \leq \widehat{C}_t$ $\P_0$-a.s.\ for any $t\geq0$ and any $k\in \mathbb{N}$, the latter implies that $I^{\star}\in\widehat{\mathcal{A}}_g$.
	
	Since $\pi(x,\cdot)$ and $\mathcal{E}^{g}[\cdot]$ are both concave, we have by Jensen's inequality
		$$\Pi(J^n)\geq \frac{1}{n}\sum_{k=1}^n \Pi(I^k).$$
		Recalling now \eqref{i0} and noting that $\E[\int_0^T e^{-rt}|\pi^*(X_t,r,\delta)|^2 \d t]<\infty$, we can invoke Fatou's lemma (as in Proposition \ref{a21} in Appendix \ref{Appendix}) and obtain
	$$\Pi(I^{\star})\geq \limsup_{n\rightarrow\infty}\Pi(J^n)=V^{\star},$$
		which yields that $I^{\star}$ is an optimal investment plan.
	\end{proof}
	
\begin{remark}
	\label{r1}
		For any $\xi\in L^2(\mathcal{F}_T)$, consider general variational preferences given by
		$$\mathcal{E}[\xi]:=\inf_{\P\in\mathcal{P}}\big(\E^{\P}[\xi]+c(\P)\big).$$
		Here, $\mathcal{P}$ consists of all the probability measures $\P$ equivalent to $\P_0$ and such that
		$$\sup_{P\in\mathcal{P}}\E\Big[\Big|\frac{\d\P}{\d\P_0}\Big|^2\Big]<\infty,$$
		 and $c:\mathcal{P}\rightarrow \mathbb{R}$ is a penalty function such that $-\infty<\inf_{\P\in\mathcal{P}}c(\P)\leq \sup_{\P\in\mathcal{P}}c(\P)<\infty$.
		Also, let $\widetilde{\mathcal{E}}[\xi]:=-\mathcal{E}[-\xi]=\sup_{\P\in\mathcal{P}}\big(\E^{\P}[\xi]-c(\P)\big)$.
		
		We can then consider the irreversible investment problem where the company has to pick $I^{\star}\in \widetilde{\mathcal{A}}$ such that the following supremum is attained:
		\begin{equation}
		\label{e3}
		\sup_{I\in\widetilde{\mathcal{A}}}\mathcal{E}\bigg[\int_0^T e^{-rt}\pi(X_t,C_t^I) \d t - \int_0^T e^{-rt} \d I_t\bigg],
		\end{equation}
		where
		$$\widetilde{\mathcal{A}}:=\Big\{I\,\big|\,\widetilde{\mathcal{E}}\bigg[\int_0^T e^{-rt} \d I_t\bigg]<\infty,\,\, I \textrm{ is a nondecreasing, right-continuous and $\mathbb{F}$-adapted process}\Big\}.$$
		
		Under Assumptions \ref{a1} and \ref{a3}, all the results previously obtained in this section still hold, possibly with the exception of the uniqueness claim for the solution to \eqref{e3}. Indeed, $\mathcal{E}$ may not satisfy the strict comparison property. For example, in order to show that the value of \eqref{e3} is finite (cf.\ Lemma \ref{ii2}) one exploits the definition of $\mathcal{E}$ and \eqref{pistar} and obtains that for any $I\in\widetilde{\mathcal{A}}$
		\begin{align*}
			&\mathcal{E}\bigg[\int_0^T e^{-rt}\pi(X_t,C_t^I)\d t- \int_0^T e^{-rt}\d I_t\bigg] \leq \sup_{\P\in\mathcal{P}} \E^{\P}\bigg[\int_0^T e^{-rt}\pi^{\star}(X_t,r,\delta)\d t\bigg]-\inf_{\P\in\mathcal{P}}c(\P)\\
			& \leq \sup_{\P\in\mathcal{P}} \E\Big[\Big|\frac{\d\P}{\d\P_0}\Big|^2\Big]^{1/2}\int_0^T \E\big[\big|\pi^{\star}(X_t,r,\delta)\big|^2\big]^{1/2}\d t-\inf_{\P\in\mathcal{P}}c(\P) < \infty.
		\end{align*}
		\end{remark}

\begin{remark}
\label{Tinfty}
We could prove an existence and uniqueness result also for the case $T=+\infty$, by imposing suitable integrability requirements on $c^{\star}$ and $\pi^{\star}$ (cf.\ Assumption B.2 and Theorem B.3 in \cite{RS}). We refrain from doing that as a similar result will not be strictly needed in the subsequent analysis. Also, when the setting is time-homogeneous, one might be able to explicitly construct an optimal investment plan, as we show in the case study of Section \ref{sec:casestudy} below.
\end{remark}

\section{First-order Conditions and Base Capacity Policies}
\label{sec:FOCs}

In this section, we establish necessary and sufficient conditions for the optimality of an investment plan, and we then construct the optimal investment plan with the help of a suitable stochastic backward equation (see also \cite{BE} and \cite{RS}).

Recall Proposition \ref{prop:A1} and for any $I\in \mathcal{A}_g$ we set
	\begin{align*}
	\mathcal{P}_g(I) & :=\Big\{\P^\theta\,\big|\,\theta\in D, \mathcal{E}^{g}\bigg[\int_0^T e^{-rt}\pi (X_t,C^I_t)\d t - \int_0^T e^{-rt}\d I_t\bigg]\\
	& = \E^{\P^\theta}\bigg[\int_0^T e^{-rt} \pi (X_t,C^I_t) \d t - \int_0^T e^{-rt} \d I_t\bigg] + \alpha_{0,T}(\theta)\Big\}.
	\end{align*}
As a matter of fact, $\mathcal{P}_g(I)$ can be interpreted as the collection of all worst-case scenarios for $I$. The following first-order conditions for optimality then holds.

	\begin{theorem}
	\label{ii13}
		Under Assumptions \ref{a1} and \ref{a3}, and (i)-(iii) of Assumption \ref{a2}, an investment plan $I^{\star}$ is optimal for problem \eqref{OC} if it is admissible and there exists some $\P^\theta\in\mathcal{P}_g(I^{\star})$ such that the following conditions hold $\P^\theta$-a.s.:
			\begin{equation}
		\label{ii14}
		\begin{split}
		&\E^{\P^\theta}_t\bigg[\int_t^T e^{-(r+\delta)(s-t)}\pi_c(X_s,C^{\star}_s) \d s - 1\bigg] \leq 0, \quad \text{for all}\,\,t\geq 0, \\
		&\int_0^T e^{-rt} \E^{\P^\theta}_t\bigg[\int_t^T e^{-(r+\delta)(s-t)}\pi_c(X_s,C^{\star}_s) \d s - 1\bigg] \d I^{\star}_t = 0.
		\end{split}\end{equation}
		Here, $C^{\star}$ is the production capacity induced by $I^{\star}$ through \eqref{i2}.
		  \end{theorem}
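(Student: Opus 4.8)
The plan is to show $\Pi(I^\star)\geq \Pi(I)$ for every admissible $I\in\mathcal{A}_g$, the main point being that the dual representation \eqref{repr-g} of Proposition \ref{prop:A1} lets us ``freeze'' the worst-case scenario. Indeed, with $\P^\theta\in\mathcal{P}_g(I^\star)$ as in the statement, the very definition of $\mathcal{P}_g(I^\star)$ gives the identity
\[
\Pi(I^\star)=\E^{\P^\theta}\Big[\int_0^T e^{-rt}\pi(X_t,C^\star_t)\,\d t-\int_0^T e^{-rt}\,\d I^\star_t\Big]+\alpha_{0,T}(\theta),
\]
while \eqref{repr-g} applied to the \emph{same} $\theta$ yields, for any admissible $I$,
\[
\Pi(I)\leq\E^{\P^\theta}\Big[\int_0^T e^{-rt}\pi(X_t,C^I_t)\,\d t-\int_0^T e^{-rt}\,\d I_t\Big]+\alpha_{0,T}(\theta).
\]
Subtracting, it suffices to prove that $I^\star$ is optimal under the single prior $\P^\theta$; that is, the $\E^{\P^\theta}$-expectation in the first display dominates the one in the second. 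This reduces matters to a first-order-condition argument in the classical, no-ambiguity spirit of \cite{RS,BR,BE}. By Lemma \ref{ii3} we may moreover assume $I,I^\star\in\widehat{\mathcal{A}}_g$, so that $C^I,C^\star$ are dominated by the square-integrable envelope $\widehat C$; and, using $\pi\geq0$, Assumption \ref{a1}(3) and Lemma \ref{ii2}, all payoffs are finite, which avoids $\pm\infty$ issues for the outer expectations.

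For the single-prior step, fix $I$ and set $\Delta I:=I^\star-I$, a process of finite variation with $\Delta I_{0^-}=0$. From \eqref{solC} one gets $C^\star_t-C^I_t=\int_0^t e^{-\delta(t-s)}\,\d\Delta I_s$, and concavity of $\pi(x,\cdot)$ (Assumption \ref{a1}(1)) gives the gradient inequality $\pi(X_t,C^\star_t)-\pi(X_t,C^I_t)\geq \pi_c(X_t,C^\star_t)\,(C^\star_t-C^I_t)$. Hence the pathwise difference of payoffs,
\[
\int_0^T e^{-rt}\big(\pi(X_t,C^\star_t)-\pi(X_t,C^I_t)\big)\,\d t-\int_0^T e^{-rt}\,\d\Delta I_t,
\]
is bounded below by $\int_0^T e^{-rt}\pi_c(X_t,C^\star_t)\big(\int_0^t e^{-\delta(t-s)}\,\d\Delta I_s\big)\d t-\int_0^T e^{-rt}\,\d\Delta I_t$. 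Applying Fubini to the first term (interchanging $\d t$ and $\d\Delta I_s$, and using $e^{-rt}e^{-\delta(t-s)}=e^{-rs}e^{-(r+\delta)(t-s)}$) rewrites the whole lower bound as
\[
\int_0^T e^{-rs}\Big(\int_s^T e^{-(r+\delta)(t-s)}\pi_c(X_t,C^\star_t)\,\d t-1\Big)\,\d\Delta I_s.
\]
Taking $\E^{\P^\theta}$ and pulling the conditional expectation inside the Lebesgue--Stieltjes integral against $\d\Delta I$ (a conditional Fubini / optional-projection argument, legitimate because $I^\star,I$ are adapted, so their variation measures are optional), this becomes $\E^{\P^\theta}\big[\int_0^T e^{-rs}\ell_s\,\d I^\star_s\big]-\E^{\P^\theta}\big[\int_0^T e^{-rs}\ell_s\,\d I_s\big]$, where $\ell_s:=\E^{\P^\theta}_s\big[\int_s^T e^{-(r+\delta)(t-s)}\pi_c(X_t,C^\star_t)\,\d t-1\big]$.

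At this point the first-order conditions \eqref{ii14} close the argument: the complementary-slackness equation makes the first term vanish, while the inequality $\ell_s\leq 0$ together with $\d I_s\geq 0$ makes the second term $\leq 0$ (hence its negative is $\geq 0$, well-defined in $[0,+\infty]$ even without extra integrability). Tracing back, $\E^{\P^\theta}[\,\cdot^\star\,]\geq\E^{\P^\theta}[\,\cdot^I\,]$, and combined with the two displays of the first paragraph this gives $\Pi(I^\star)\geq\Pi(I)$; since $I$ was arbitrary, $I^\star$ is optimal.

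I expect the main obstacle to be the measure-theoretic middle step, namely the (conditional) Fubini interchange and the passage from $\E^{\P^\theta}\big[\int_0^T Y_s\,\d\Delta I_s\big]$, with $Y_s=\int_s^T e^{-(r+\delta)(t-s)}\pi_c(X_t,C^\star_t)\,\d t-1$, to $\E^{\P^\theta}\big[\int_0^T \ell_s\,\d\Delta I_s\big]$: this needs control of the integrability of $\pi_c(X_t,C^\star_t)$ (where the Inada condition is delicate near $c=0$) against the variation of $\Delta I$, for which one invokes the domination $C^I,C^\star\leq\widehat C$ from Lemma \ref{ii3}, Assumption \ref{a3}, and standard BSDE estimates (cf.\ Appendix \ref{Appendix}). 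A minor additional point is handling lump-sum components of $I,I^\star$ at $t=0$ and $t=T$, which is absorbed by the convention $\int_0^T=\int_{[0,T]}$ and right-continuity of the investment plans.
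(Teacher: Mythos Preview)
Your proof is correct and follows essentially the same route as the paper: freeze the worst-case prior $\P^\theta$ via the dual representation \eqref{repr-g}, linearize using concavity of $\pi(x,\cdot)$, swap the $\d t$ and $\d\Delta I$ integrals by Fubini, and pass to the conditional expectation $\ell_s$ via an optional-projection argument (the paper cites Theorem~1.33 in \cite{J79} for this step), after which the two conditions in \eqref{ii14} finish the job exactly as you describe. The only minor differences are presentational---you invoke Lemma~\ref{ii3} for integrability safeguards whereas the paper proceeds directly---but the structure and key ideas coincide.
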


\begin{proof}
		 Let $I \in \mathcal{A}_g$, recall \eqref{solC}, and, in order to simplify exposition, set $C=C^I$. Using Proposition \ref{prop:A1} we have for $P^\theta\in\mathcal{P}_g(I^{\star})$, $\theta \in D$,
		\begin{align}
		\label{verify}
		&\Pi(I^{\star})-\Pi(I) \nonumber \\
		=&\mathcal{E}^{g}\bigg[\int_0^T e^{-rt}\pi (X_t,C^{\star}_t)\d t - \int_0^T e^{-rt}\d I_t^{\star}\bigg]-\mathcal{E}^{g}\bigg[\int_0^T e^{-rt}\pi (X_t,C_t)\d t - \int_0^T e^{-rt}\d I_t\bigg] \nonumber \\
		\geq &\E^{\P^\theta}\bigg[\int_0^T e^{-rt}\pi (X_t,C^{\star}_t)\d t - \int_0^T e^{-rt}\d I_t^{\star}\bigg] + \alpha_{0,T}(\theta) \nonumber \\
		&\hspace{0.2cm} - \E^{\P^\theta}\bigg[\int_0^T e^{-rt}\pi (X_t,C_t)\d t - \int_0^T e^{-rt}\d I_t\bigg] -\alpha_{0,T}(\theta)  \\
		= &\E^{\P^\theta}\bigg[\int_0^T e^{-rt}\pi_c(X_t,C_t^{\star})\big(C_t^{\star}-C_t\big)\d t - \int_0^T e^{-rt}\d\big(I_t^{\star}-I_t\big)\bigg] \nonumber \\
		=&\E^{\P^\theta}\bigg[\int_0^T e^{-rt}\pi_c(X_t,C_t^{\star})\Big(\int_0^t e^{\delta(s-t)}\d\big(I_s^{\star}-I_s\big)\Big) \d t - \int_0^T e^{-rt}\d\big(I_t^{\star}-I_t\big)\bigg] \nonumber \\
		=&\E^{\P^\theta}\bigg[\int_0^T e^{-rt}\Big(\int_t^T e^{-(r+\delta)(s-t)}\pi_c(X_s,C_s^{\star})\d s - 1\Big)\d\big(I_t^{\star}-I_t\big)\bigg],\nonumber
		\end{align}
		where Fubini-Tonelli's theorem has been employed.
		
		By Theorem 1.33 in \cite{J79} and the second equation in \eqref{ii14} we obtain that
		\begin{align*}
		&\E^{\P^\theta}\bigg[\int_0^T e^{-rt}\Big(\int_t^T e^{-(r+\delta)(s-t)}\pi_c(X_s,C_s^{\star})\d s - 1\Big)\d I_t^{\star}\bigg] \\
		=&\E^{\P^\theta}\bigg[\int_0^T e^{-rt}\E^{\P^\theta}_t\bigg[\int_t^T e^{-(r+\delta)(s-t)}\pi_c(X_s,C_s^{\star})\d s-1\bigg]\d I_t^{\star}\bigg]=0.
		\end{align*}
		Also, by the first equation in \eqref{ii14} one has
		\begin{align*}
		&\E^{\P^\theta}\bigg[\int_0^T e^{-rt}\Big(\int_t^T e^{-(r+\delta)(s-t)}\pi_c(X_s,C_s^{\star})\d s - 1\Big)\d I_t\bigg] \\
		=&\E^{\P^\theta}\bigg[\int_0^T e^{-rt}\E^{\P^\theta}_t\bigg[\int_t^T e^{-(r+\delta)(s-t)}\pi_c(X_s,C_s^{\star})\d s-1\bigg]\d I_t\bigg]\leq 0.
		\end{align*}
		Feeding those last two equations back into \eqref{verify} we find $\Pi(I^{\star})\geq \Pi(I)$; that is, $I^{\star}$ is optimal.
	\end{proof}
	
	 \begin{remark}\label{r2}
		Consider the irreversible investment problem \eqref{e3} in Remark \ref{r1}. Let $I$ be an admissible investment plan and $C^I$ its associated production capacity. Set
		\begin{align}
		\mathcal{P}(I) &:=\Big\{\P\in\mathcal{P}\,\Big|\,\inf_{\P\in\mathcal{P}}\Big\{\E^{\P}\bigg[\int_0^T e^{-rt}\pi(X_t,C^I_t) \d t - \int_0^T e^{-rt}\d I_t\bigg]+c(\P)\Big\} \nonumber \\
		& = \E^{\P}\bigg[\int_0^T e^{-rt}\pi(X_t,C^I_t) \d t - \int_0^T e^{-rt}\d I_t\bigg]+c(\P)\Big\}. \nonumber
		\end{align}
		
		Then, an investment plan $I^{\star}$ is optimal if it is admissible and there exists some $\P\in\mathcal{P}(I^{\star})$ such that $\P$-a.s.:
		\begin{displaymath}
		\begin{split}
		&\E^{\P}_t\bigg[\int_t^T e^{-(r+\delta)(s-t)}\pi_c(X_s,C^{\star}_s) \d s - 1\bigg] \leq 0, \quad \text{for all}\,\,t\geq 0,\\
		&\int_0^T e^{-rt} \E^{\P}_t\bigg[\int_t^T e^{-(r+\delta)(s-t)}\pi_c(X_s,C^{\star}_s) \d s - 1\bigg] \d I^{\star}_t = 0.
		\end{split}
		\end{displaymath}
	\end{remark}
	
	We now prove that the first-order conditions \eqref{ii14} are also necessary for optimality under an additional assumption on the driver $g$. For the detailed proof, we may refer to Appendix \ref{AppendixC}.
		
		\begin{theorem}
		\label{ii15}
			Suppose that Assumptions \ref{a1}, \ref{a3} and Assumption \ref{a2} (i)-(iii) hold. Furthermore, assume that $g$ satisfies the following condition:
 \begin{itemize}
 \item[(v)] For each $\omega\in\Omega$, $t\in[0,T]$ and $z\in\mathbb{R}$, the equation $g(t,\omega,z)-xz=f(t,\omega,x)$ admits a unique solution $x\in[-\kappa,\kappa]$, denoted by $x(t,\omega,z)$. Furthermore, $z \mapsto x(t,\omega,z)$ is continuous, for any $(t,\omega) \in [0,T]\times\Omega$.
 \end{itemize}
  Suppose that for any constant $a>0$ and for any $t\in[0,T]$, one has $\E\big[|\pi_c(X_t,a)|^2\big]<\infty$.
 If an investment plan $I^{\star}$ is optimal for problem \eqref{OC}, then there exists some $\P^\theta\in\mathcal{P}_g(I^{\star})$ such that the following conditions hold $\P^\theta$-a.s.:
		\begin{displaymath}
		\begin{split}
		&\E^{\P^\theta}_t\bigg[\int_t^T e^{-(r+\delta)(s-t)}\pi_c(X_s,C^{\star}_s) \d s - 1\bigg] \leq 0, \quad \text{for all}\,\,t\geq 0, \\
		&\int_0^T e^{-rt} \E^{\P^\theta}_t\bigg[\int_t^T e^{-(r+\delta)(s-t)}\pi_c(X_s,C^{\star}_s) \d s - 1\bigg] \d I^{\star}_t = 0.
		\end{split}\end{displaymath}
		Here, $C^{\star}$ is the production capacity induced by $I^{\star}$ through \eqref{i2}.
	 \end{theorem}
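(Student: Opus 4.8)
The plan is to carry out the perturbation (variational-inequality) argument of \cite{RS} at the level of the nonlinear functional $\mathcal{E}^{g}$, the decisive new input being that condition (v) produces a \emph{canonical} worst-case Girsanov kernel and makes it \emph{stable} under $L^2$-perturbations of the terminal payoff. By Lemma \ref{ii3} we may assume $I^{\star}\in\widehat{\mathcal{A}}_g$, so that, writing $\xi^{\star}:=\int_0^T e^{-rt}\pi(X_t,C^{\star}_t)\,\d t-\int_0^T e^{-rt}\,\d I^{\star}_t$, the bound $\pi(x,c)\le\pi^{\star}(x,r,\delta)+(r+\delta)c$ and Assumption \ref{a3} give $\xi^{\star}\in L^2(\mathcal{F}_T)$. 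Let $(Y^{\star},Z^{\star})$ solve the BSDE with terminal datum $\xi^{\star}$ and driver $g$, and set $\theta^{\star}_t:=x(t,\omega,Z^{\star}_t)$, with $x$ the function from condition (v). Since the $\kappa$-Lipschitz property forces $f(t,\theta)=+\infty$ whenever $|\theta|>\kappa$, one has $|\theta^{\star}|\le\kappa$, hence $\theta^{\star}\in D$; rewriting the BSDE under $\P^{\theta^{\star}}$ by Girsanov yields $Y^{\star}_t=\E^{\P^{\theta^{\star}}}_t[\xi^{\star}]+\alpha_{t,T}(\theta^{\star})$, so that $\P^{\theta^{\star}}\in\mathcal{P}_g(I^{\star})$. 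Conversely, any $\P^{\theta}\in\mathcal{P}_g(I^{\star})$ must satisfy $g(t,Z^{\star}_t)-\theta_tZ^{\star}_t=f(t,\theta_t)$ for $\d t\otimes\P_0$-a.e.\ $(t,\omega)$, whence $\theta=\theta^{\star}$ by (v); thus $\mathcal{P}_g(I^{\star})=\{\P^{\theta^{\star}}\}$ is a singleton, and the same construction and uniqueness apply to an arbitrary square-integrable payoff.

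Fix an admissible $I$ with $\xi^{I}:=\int_0^T e^{-rt}\pi(X_t,C^{I}_t)\,\d t-\int_0^T e^{-rt}\,\d I_t\in L^2$ (this covers $I\equiv0$ and $I^{\star}$ perturbed by a lump sum), and let $\epsilon\in(0,1)$. Put $I^{\epsilon}:=(1-\epsilon)I^{\star}+\epsilon I\in\mathcal{A}_g$ (a convex combination of admissible plans); by \eqref{solC}, $C^{I^{\epsilon}}=(1-\epsilon)C^{\star}+\epsilon C^{I}$, so concavity of $\pi(x,\cdot)$ gives $\xi^{I^{\epsilon}}=\xi^{\star}+\epsilon\eta^{\epsilon}\in L^2$, where
$$\eta^{\epsilon}:=\int_0^T e^{-rt}\,\frac{\pi(X_t,C^{\star}_t+\epsilon(C^{I}_t-C^{\star}_t))-\pi(X_t,C^{\star}_t)}{\epsilon}\,\d t-\int_0^T e^{-rt}\,\d(I_t-I^{\star}_t).$$
Let $\theta^{\epsilon}$ be the kernel with $\mathcal{P}_g(I^{\epsilon})=\{\P^{\theta^{\epsilon}}\}$, obtained as in the first paragraph from the BSDE with terminal datum $\xi^{I^{\epsilon}}$. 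Since $\mathcal{E}^{g}[\xi^{I^{\epsilon}}]=\E^{\P^{\theta^{\epsilon}}}[\xi^{I^{\epsilon}}]+\alpha_{0,T}(\theta^{\epsilon})$ while $\mathcal{E}^{g}[\xi^{\star}]\le\E^{\P^{\theta^{\epsilon}}}[\xi^{\star}]+\alpha_{0,T}(\theta^{\epsilon})$ (the latter because $\theta^{\epsilon}\in D$ is only feasible, not necessarily optimal, for $\xi^{\star}$), the penalties cancel and
$$\Pi(I^{\epsilon})-\Pi(I^{\star})=\mathcal{E}^{g}[\xi^{I^{\epsilon}}]-\mathcal{E}^{g}[\xi^{\star}]\ \ge\ \E^{\P^{\theta^{\epsilon}}}[\xi^{I^{\epsilon}}-\xi^{\star}]=\epsilon\,\E^{\P^{\theta^{\epsilon}}}[\eta^{\epsilon}].$$
As $I^{\star}$ is optimal, $\Pi(I^{\epsilon})\le\Pi(I^{\star})$, and hence $\E^{\P^{\theta^{\epsilon}}}[\eta^{\epsilon}]\le0$ for every $\epsilon\in(0,1)$.

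Next I would let $\epsilon\downarrow0$. The difference quotient in $\eta^{\epsilon}$ converges pointwise to $\pi_c(X_t,C^{\star}_t)(C^{I}_t-C^{\star}_t)$ and, by concavity of $\pi(x,\cdot)$, stays between this limit and $\pi(X_t,C^{I}_t)-\pi(X_t,C^{\star}_t)$, both of which lie in $L^2$ by Assumption \ref{a3} and the assumed $\E[|\pi_c(X_t,a)|^2]<\infty$ (using $C^{\star}_t\ge ce^{-\delta t}$ when $c>0$; for $c=0$ one first perturbs on $\{C^{\star}_t>0\}$ and then removes an auxiliary lower bound). Hence $\eta^{\epsilon}\to\eta^{0}:=\int_0^T e^{-rt}\pi_c(X_t,C^{\star}_t)(C^{I}_t-C^{\star}_t)\,\d t-\int_0^T e^{-rt}\,\d(I_t-I^{\star}_t)$ in $L^2$, so $\xi^{I^{\epsilon}}\to\xi^{\star}$ in $L^2$. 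By $L^2$-stability of BSDEs, $Z^{\epsilon}\to Z^{\star}$ in $L^2(\d t\otimes\P_0)$, hence $\d t\otimes\P_0$-a.e.\ along a subsequence; by continuity of $z\mapsto x(t,\omega,z)$ (condition (v)) and $|\theta^{\epsilon}|\le\kappa$, dominated convergence gives $\theta^{\epsilon}\to\theta^{\star}$ in $L^2(\d t\otimes\P_0)$, and the whole family converges since the limit is unique. The uniform bound $|\theta^{\epsilon}|\le\kappa$ makes the Dol\'eans densities $\d\P^{\theta^{\epsilon}}/\d\P_0$ bounded in every $L^p(\P_0)$ and convergent to $\d\P^{\theta^{\star}}/\d\P_0$ in $L^2(\P_0)$; together with $\eta^{\epsilon}\to\eta^{0}$ in $L^2(\P_0)$ this yields $\E^{\P^{\theta^{\epsilon}}}[\eta^{\epsilon}]\to\E^{\P^{\theta^{\star}}}[\eta^{0}]$. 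We conclude
$$\E^{\P^{\theta^{\star}}}\bigg[\int_0^T e^{-rt}\pi_c(X_t,C^{\star}_t)(C^{I}_t-C^{\star}_t)\,\d t-\int_0^T e^{-rt}\,\d(I_t-I^{\star}_t)\bigg]\le0$$
for every admissible $I$ with $\xi^{I}\in L^2$.

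It then remains to turn this into \eqref{ii14}, which is the computation in the proof of Theorem \ref{ii13} read in reverse. By Fubini-Tonelli the last display equals $\E^{\P^{\theta^{\star}}}[\int_0^T e^{-rt}\Psi_t\,\d(I_t-I^{\star}_t)]\le0$, with $\Psi_t:=\int_t^T e^{-(r+\delta)(s-t)}\pi_c(X_s,C^{\star}_s)\,\d s-1$; replacing $\Psi_t$ by $M_t:=\E^{\P^{\theta^{\star}}}_t[\Psi_t]$ via Theorem 1.33 in \cite{J79} (applied to $\d I$ and $\d I^{\star}$ separately) gives $\E^{\P^{\theta^{\star}}}[\int_0^T e^{-rt}M_t\,\d I_t]\le\E^{\P^{\theta^{\star}}}[\int_0^T e^{-rt}M_t\,\d I^{\star}_t]$ for all admissible $I$ with $\xi^I\in L^2$. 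Taking $I=I^{\star}$ plus a lump sum $\Delta>0$ at a stopping time $\tau\le T$ on an event $A\in\mathcal{F}_{\tau}$, and then $A=\{M_{\tau}>0\}$, forces $M_t\le0$ for all $t$, $\P^{\theta^{\star}}$-a.s.\ (by a section argument, after passing to the c\`adl\`ag modification of $M$, which differs from a $\P^{\theta^{\star}}$-martingale by a continuous finite-variation process); taking $I\equiv0$ gives $\E^{\P^{\theta^{\star}}}[\int_0^T e^{-rt}M_t\,\d I^{\star}_t]\ge0$, which with $M\le0$ and $\d I^{\star}\ge0$ forces $\int_0^T e^{-rt}M_t\,\d I^{\star}_t=0$, $\P^{\theta^{\star}}$-a.s. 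These are exactly the two conditions of \eqref{ii14} for $\P^{\theta}=\P^{\theta^{\star}}\in\mathcal{P}_g(I^{\star})$. I expect the main obstacle to be the passage $\epsilon\downarrow0$ in the third step: one must simultaneously control the $L^2$-convergence $\eta^{\epsilon}\to\eta^{0}$ — where the extra integrability $\E[|\pi_c(X_t,a)|^2]<\infty$ enters and where $c=0$ needs a short approximation — and the joint convergence of the worst-case kernels $\theta^{\epsilon}\to\theta^{\star}$ together with the associated densities, which is precisely what the uniqueness-plus-continuity condition (v) is tailored to deliver; a secondary point is to verify that $\mathcal{E}^{g}$ and the representation \eqref{repr-g} are available for all the square-integrable payoffs that occur.
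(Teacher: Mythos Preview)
Your proposal is correct and follows essentially the same route as the paper's proof in Appendix \ref{AppendixC}: perturb via $I^\epsilon=(1-\epsilon)I^\star+\epsilon I$, lower-bound $\Pi(I^\epsilon)-\Pi(I^\star)$ using the worst-case kernel $\theta^\epsilon$ attached to $\xi^{I^\epsilon}$, pass to the limit through BSDE $L^2$-stability combined with condition (v) to get $\theta^\epsilon\to\theta^\star$, and then read off the two first-order conditions from optimality of $I^\star$ in the resulting linear problem (your lump-sum argument is the paper's Lemma \ref{l1-nec}). One place where the paper's treatment is cleaner than your proposed workaround is the case $c=0$: the Inada condition forces $I^\star_0>0$, so $C^\star_t\ge e^{-\delta T}(c\vee I^\star_0)=:c_0>0$ uniformly on $[0,T]$, which gives the required domination by $\pi_c(X_t,c_0)$ directly without any auxiliary approximation.
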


\begin{remark}
\begin{enumerate}
\item The proof of the necessity of the first-order condition needs a careful and technical extension of the results in \cite{BR} in order to accommodate our multiple-priors setting. In particular, it requires estimates for BSDEs and condition (v) above (see Lemma \ref{l6}). That is the main reason why we cannot extend the necessary conditions to the general variational preferences, as instead we do for the sufficient conditions in Remark \ref{r2}.
\item The requirement $\E[|\pi_c(X_t,a)|^2]<\infty$, for any $a>0$ and $t\in[0,T]$, holds for the separable operating profit function discussed in Remark \ref{remark3.2}.
\end{enumerate}
\end{remark}

Motivated by the analysis of \cite{RS} and the sufficiency of the first-order conditions for optimality, we can now establish an appropriate construction for the optimal investment plan. A key building block is the so-called \emph{base capacity} which induces the minimal production capacity that the firm would keep. In other words, if the capacity is greater than the minimal one, no more investment would be made. If capacity is below the minimal one, the firm should invest ``just enough" in order to reach the minimal level.

\begin{definition}[cf.\ Definition 3.1 in \cite{RS}]
	\label{RS}
		For a given optional process $\ell$ and depreciation rate $\delta\geq 0$,
		\begin{equation}\label{ii4}
		C_t^{\ell,\delta}:=e^{-\delta t}\big[c \vee \sup_{s\in[0,t]}(\ell_s e^{\delta s})\big]
		\end{equation}
		is the capacity that tracks $\ell$ at depreciation rate $\delta$. Setting
		\begin{equation}
		\label{zeta}
		\zeta_t^{\ell,\delta}:= \sup_{s\in[0,t]}\Big(\frac{\ell_s - c e^{-\delta s}}{e^{-\delta s}}\Big) \vee 0, \quad \zeta_{0^-}^{\ell,\delta}:=0,
		\end{equation}
		the investment plan that finances $C^{\ell,\delta}$ is denoted by $I^{\ell,\delta}$ and it is such that
		\begin{equation}
		\label{ii5}
		I_t^{\ell,\delta}:= \int_0^t e^{-\delta s} \d \zeta_s^{\ell,\delta}, \quad  I_{0^-}^{\ell,\delta}:=0.
		\end{equation}
		We call $I^{\ell,\delta}$ the base capacity policy with depreciation rate $\delta$ and base capacity $\{\ell_t\}_{t\in[0,T]}$.
		\end{definition}


Due to ambiguity, the irreversible investment problem is considered under multiple priors $\{\P^\theta,\theta\in D\}$ (cf.\ Proposition \ref{prop:A1}). Therefore, the base capacity changes if one chooses a different belief $\P^{\theta}$. We shall see below that, in order to obtain an explicit description for the optimal policy of \eqref{OC}, we are faced with the problem of finding a probability measure $\P^{\theta}$ and an $\mathbb{F}$-progressively measurable process $\ell^{\P^{\theta}}:=\ell^{\theta}$ such that: (i) $\ell^{\theta}$ is a solution to a suitable backward equation under $\P^{\theta}$ and, at the same time, (ii) under $\P^{\theta}$ the net expected profit resulting from an implementation of the base capacity policy $I^{\ell^{\theta},\delta}$ is minimal.

	\begin{theorem}
	\label{i6}
	For each $\P^\theta$ with $\theta\in D$ there exists a unique $\mathbb{F}$-progressively measurable process $\{\ell^\theta_t\}_{t\in [0,T]}$ that solves the backward equation
		\begin{equation}
		\label{ii6}
		\E^{\P^\theta}_t\bigg[\int_t^T e^{-(r+\delta)(s-t)}\pi_c\big(X_s,e^{-\delta s}\sup_{t\leq u\leq s}\big(\ell^\theta_u e^{\delta u}\big)\big) \d s\bigg]=1
		\end{equation}
		for all $t<T$. Let $C^{\ell^\theta,\delta}$ be the capacity that tracks $\ell^\theta$ at depreciation rate $\delta$ and $I^{\ell^\theta,\delta}$ be the base capacity policy with depreciation rate $\delta$ and base capacity $\ell^\theta$. Then $I^{\ell^\theta,\delta}$ is optimal for the irreversible investment problem \eqref{OC} if it is admissible and
		\begin{align}
		\label{ii12}
		& \mathcal{E}^{g}\bigg[\int_0^T e^{-rt}\pi (X_t,C_t^{\ell^\theta,\delta})\d t - \int_0^T e^{-rt} \d I_t^{\ell^\theta,\delta})\bigg] \nonumber \\
		& = \E^{\P^\theta}\bigg[\int_0^T e^{-rt}\pi (X_t,C_t^{l^\theta,\delta})\d t - \int_0^T e^{-rt} \d I_t^{\ell^\theta,\delta}\bigg] + \alpha_{0,T}(\theta).
		\end{align}
	\end{theorem}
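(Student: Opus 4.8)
The plan is to split the statement into two parts: first, the existence and uniqueness of an $\mathbb{F}$-progressively measurable process $\ell^\theta$ solving the backward equation \eqref{ii6} under each fixed prior $\P^\theta$; second, the verification that the associated base capacity policy $I^{\ell^\theta,\delta}$ is optimal for \eqref{OC} once \eqref{ii12} holds. The second part will be essentially a corollary of the first-order conditions already established in Theorem \ref{ii13}, so the real work is in the first part.

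For existence and uniqueness of $\ell^\theta$, I would work entirely under the fixed measure $\P^\theta$ (which is equivalent to $\P_0$, so the integrability Assumption \ref{a3} transfers, up to the bound $\sup_\theta \E[|\d\P^\theta/\d\P_0|^2]$ implicit in $\theta \in D$). The key observation is that \eqref{ii6} is exactly the backward equation of the form appearing in \cite{BR, BE, RS}, but with the ordinary conditional expectation $\E_t$ replaced by $\E^{\P^\theta}_t$. Concretely, define the operator $F$ acting on optional processes $\ell$ by
\begin{displaymath}
F_t(\ell) := \E^{\P^\theta}_t\bigg[\int_t^T e^{-(r+\delta)(s-t)}\pi_c\big(X_s, e^{-\delta s}\sup_{t\leq u\leq s}(\ell_u e^{\delta u})\big) \d s\bigg],
\end{displaymath}
and seek $\ell^\theta$ with $F_t(\ell^\theta) = 1$ for all $t < T$. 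Since $\pi_c(x,\cdot)$ is strictly decreasing with the full Inada range $(0,\infty)$ by Assumption \ref{a1}, the map $\ell \mapsto F_t(\ell)$ is strictly decreasing in the running-supremum sense, which yields uniqueness: two solutions would have to coincide by a pathwise comparison argument exactly as in \cite[proof of Theorem 3.2]{RS}. For existence, I would reproduce the construction of \cite{RS}: approximate by a Picard–type or backward iteration, or alternatively define $\ell^\theta_t$ directly via a family of auxiliary optimal-stopping / representation problems indexed by $t$ and then glue them using the supermartingale structure. The integrability hypotheses (H1)–(H2) of Assumption \ref{a3} guarantee that $c^\star(X_\cdot,r,\delta)$ is an $L^2$ upper bound for the candidate $\ell^\theta$, so all conditional expectations are finite and the process is well-defined; the continuity/regularity of the resulting process follows from the dominated convergence theorem applied under $\P^\theta$.

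Given $\ell^\theta$, the second part is to check that $C^{\star} := C^{\ell^\theta,\delta}$ and $I^\star := I^{\ell^\theta,\delta}$ satisfy the two conditions \eqref{ii14} of Theorem \ref{ii13} under the prior $\P^\theta$, and that \eqref{ii12} says precisely $\P^\theta \in \mathcal{P}_g(I^\star)$. By Definition \ref{RS}, $C^{\ell^\theta,\delta}_s = e^{-\delta s}\sup_{u\leq s}(\ell^\theta_u e^{\delta u})$ whenever $c$ does not bind, so \eqref{ii6} immediately gives $\E^{\P^\theta}_t[\int_t^T e^{-(r+\delta)(s-t)}\pi_c(X_s,C^\star_s)\d s] = 1 \le 1$, i.e. the first (inequality) condition in \eqref{ii14} holds with equality on $\{t : c\text{ not binding}\}$ and with $\le$ when $c$ binds (there $C^\star_s \ge c \ge \ell^\theta$-level, so $\pi_c$ is smaller). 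For the complementarity condition, the measure $\d I^\star_t = \d I^{\ell^\theta,\delta}_t$ is carried by the set of times where the running supremum in \eqref{zeta} strictly increases; at such times the "tracking" is exact and $C^\star$ sits on the base capacity, so the bracket in \eqref{ii14} equals $1 - 1 = 0$ — this is the standard flat-off / Skorokhod-type argument, and it makes $\int_0^T e^{-rt}(\cdots)\d I^\star_t = 0$. Then Theorem \ref{ii13} (whose hypotheses are exactly Assumptions \ref{a1}, \ref{a3}, \ref{a2}(i)-(iii)) delivers optimality, provided $I^\star$ is admissible, which is the hypothesis we assumed.

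The main obstacle I expect is the existence half of the first part: carrying the \cite{RS} construction of the base-capacity process through with $\E^{\P^\theta}_t$ in place of $\E_t$. One must check that the relevant processes are genuine $\P^\theta$-(super)martingales with the right path regularity, that the running-supremum nonlinearity does not destroy the monotone approximation, and that the $L^2(\P^\theta)$ bounds — which require combining (H1)–(H2) with the square-integrability of the density $\d\P^\theta/\d\P_0$ built into $D = D_g$ — are uniform enough to pass to the limit via dominated/monotone convergence. Once that machinery is in place, uniqueness and the verification step are routine adaptations of arguments already available in \cite{RS} and in Theorem \ref{ii13} above.
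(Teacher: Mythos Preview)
Your proposal is correct and follows essentially the same route as the paper: defer existence and uniqueness of $\ell^\theta$ to the Bank--El Karoui/Riedel--Su machinery applied under the fixed prior $\P^\theta$, then verify the first-order conditions \eqref{ii14} of Theorem \ref{ii13} (with \eqref{ii12} asserting $\P^\theta\in\mathcal{P}_g(I^{\ell^\theta,\delta})$). One small imprecision: you claim the first condition in \eqref{ii14} holds \emph{with equality} whenever $c$ does not bind, but \eqref{ii6} involves $\sup_{t\le u\le s}$ while $C^{\ell^\theta,\delta}_s$ involves $\sup_{0\le u\le s}$, so in general you only get $\le$; the paper handles this cleanly via the single inequality $C^{\ell^\theta,\delta}_s\ge e^{-\delta s}\sup_{t\le u\le s}(\ell^\theta_u e^{\delta u})$ together with the fact that $\pi_c$ is decreasing, and reserves equality for times $t$ in the support of $\d I^{\ell^\theta,\delta}$, where indeed $\sup_{0\le u\le s}=\sup_{t\le u\le s}$ for all $s\ge t$.
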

	
	\begin{proof}
		For the existence and uniqueness of the solution to backward equation \eqref{ii6}, we may refer to the proof of Theorem 3.2 in \cite{RS}. In fact, Bank and El Karoui \cite{BE} study this kind of backward equation in a more general setting.
		
		To complete the proof, it suffices to show that $C^{\ell^\theta,\delta}$ satisfies the sufficient first-order conditions \eqref{ii14}. Note that $\pi$ is strictly concave in its second variable and
		\begin{displaymath}
		C^{\ell^\theta,\delta}_s \geq e^{-\delta s}\sup_{t\leq u\leq s}\big(\ell^\theta_u e^{\delta u}\big), \quad \forall s \in [0,T].
		\end{displaymath}
		It thus follows that
		\begin{align*}
		&\E^{\P^\theta}_t\bigg[\int_t^T e^{-(r+\delta)(s-t)}\pi_c(X_s,C^{\ell^\theta,\delta}_s) \d s\bigg]
		\leq \E^{\P^\theta}_t\bigg[\int_t^T e^{-(r+\delta)(s-t)}\pi_c(X_s,e^{-\delta s}\sup_{t\leq u\leq s}(\ell^\theta_u e^{\delta u}))\d s\bigg]=1.
		\end{align*}
		That is, the first equation of \eqref{ii14} is satisfied.
		
		In particular, if $t$ belongs to the support of the random Borel-measure $\d I^{\ell^\theta,\delta}_t$,
		for any $s\geq t$, we have
		\begin{displaymath}
		C^{\ell^\theta,\delta}_s= e^{-\delta s}\big[c \vee \sup_{0\leq u\leq s}\big(\ell^\theta_u e^{\delta u}\big)\big]= e^{-\delta s}\sup_{t\leq u\leq s}\big(\ell^\theta_u e^{\delta u}\big),
		\end{displaymath}
		and therefore also the second of \eqref{ii14} holds.
	\end{proof}

\begin{remark}
\label{r3}
Consider the general irreversible investment problem \eqref{e3} in Remark \ref{r1}. Let $\ell^{\P}$ be the solution to \eqref{ii6} under probability $\P\in\mathcal{P}$ and let $C^{\ell^{\P},\delta}$, $I^{\ell^{\P},\delta}$ be, respectively, the capacity and investment policy associated to base capacity $\ell^{\P}$ and depreciation rate $\delta$. Then $I^{\ell^{\P},\delta}$ is optimal for \eqref{e3} if $I^{\ell^{\P},\delta}\in\widetilde{\mathcal{A}}$ and $\P\in \mathcal{P}(I^{\ell^{\P},\delta})$, where $\mathcal{P}(I^{\ell^{\P},\delta})$ is given in Remark \ref{r2}.
\end{remark}



In the following, we provide some interesting properties of the optimal investment plan. First of all, Proposition \ref{p1} below can be viewed as a ``martingale optimality principle'': that is to say, an investment plan which is optimal at the original time is its best continuation at any time afterwards.

Before proceeding, we need some preliminary definition and material.
\begin{definition}
\label{def:StronSM}
Let $\mathcal{T}_{0,T}$ be the collection of all $\mathbb{F}$-stopping times taking values between $0$ and $T$. A family of random variables $\{X_\tau,\tau\in\mathcal{T}_{0,T}\}$ is said to be an {\rm{$\mathcal{E}^g$-supermartingale in the strong sense}}, if $X_\tau \in L^2(\mathcal{F}_\tau)$ and $\mathcal{E}^g_\tau[X_\sigma] \leq X_\tau$ for any $\tau,\sigma\in\mathcal{T}_{0,T}$ with $\tau\leq \sigma$.
\end{definition}

Given the optimal investment plan $I^{\star}$ for \eqref{OC}, let $S\in \mathcal{T}_{0,T}$. We set
\begin{align}
\label{AS}
\mathcal{A}_S(I^{\star}):=&\Big\{I\,\Big|\, I \textrm{ is nondecreasing, right-continuous and $\mathbb{F}$-adapted, } \nonumber \\
&\ \  I|_{[0,S)}=I^{\star}|_{[0,S)},\,\,{\E}^{\widetilde{g}}\Big[\int_0^T e^{-rt} \d I_t\Big]<\infty\Big\}.
\end{align}

\begin{proposition}
\label{p1}
Suppose that Assumptions \ref{a1}, \ref{a2} and \ref{a3} hold. Furthermore, assume that $g$ is super-additive. Let $I^{\star}$ be optimal for \eqref{OC}, recall \eqref{AS}, and consider then the optimal irreversible investment problem
\begin{equation}
\label{ii18}
V_S:=\esssup_{I\in\mathcal{A}_S(I^{\star})}\mathcal{E}^{g}_S\bigg[\int_0^T e^{-rt}\pi(X_t,C^{I}_t) \d t - \int_0^T e^{-rt}\d I_t\bigg], \quad S\in\mathcal{T}_{0,T}.
\end{equation}
Then, $\big\{V_S,\, S\in\mathcal{T}_{0,T}\big\}$ is an $\mathcal{E}^{g}$-supermartingale in the strong sense. Moreover, $I^{\star}$ is also optimal for \eqref{ii18}.
\end{proposition}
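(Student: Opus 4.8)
The plan is to run the classical ``martingale optimality principle'' argument: first show that the family of $\mathcal{F}_S$-measurable payoffs over which the essential supremum in \eqref{ii18} is taken is directed upward (closed under pasting), and then transfer this structure through the $g$-conditional expectation by a Fatou-type interchange. Throughout write $\Phi(I):=\int_0^T e^{-rt}\pi(X_t,C^I_t)\,\d t-\int_0^T e^{-rt}\,\d I_t$, so that $\Pi(I)=\mathcal{E}^{g}[\Phi(I)]$ and $V_S=\esssup_{I\in\mathcal{A}_S(I^{\star})}\mathcal{E}^{g}_S[\Phi(I)]$. Well-posedness is quick: by \eqref{i0}, $\Phi(I)\le\bar\xi:=\int_0^T e^{-rt}\pi^{\star}(X_t,r,\delta)\,\d t+c\in L^2(\mathcal{F}_T)$ for every $I\in\mathcal{A}_S(I^{\star})$, so the comparison theorem and the conditional version of Remark \ref{re} give $\mathcal{E}^{g}_S[\Phi(I)]\le\mathcal{E}^{\kappa}_S[\bar\xi]\in L^2(\mathcal{F}_S)$; since $I^{\star}\in\widehat{\mathcal{A}}_g\subseteq\mathcal{A}_S(I^{\star})$ the family is nonempty, and $\mathcal{E}^{g}_S[\Phi(I^{\star})]\in L^2(\mathcal{F}_S)$ because $\Phi(I^{\star})\in L^2(\mathcal{F}_T)$ by \eqref{i0}, \eqref{i1} and (H2); hence $V_S$, being $\mathcal{F}_S$-measurable and squeezed between two $L^2$-functions, lies in $L^2(\mathcal{F}_S)$.

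Next I would establish the pasting property. Given $I^1,I^2\in\mathcal{A}_S(I^{\star})$, set $A:=\{\mathcal{E}^{g}_S[\Phi(I^1)]\ge\mathcal{E}^{g}_S[\Phi(I^2)]\}\in\mathcal{F}_S$ and define $I^3:=\mathbf{1}_A I^1+\mathbf{1}_{A^c}I^2$, which is again a nondecreasing, right-continuous, $\mathbb{F}$-adapted process agreeing with $I^{\star}$ on $[0,S)$. Since \eqref{solC} is affine in $I$, $C^{I^3}=\mathbf{1}_A C^{I^1}+\mathbf{1}_{A^c}C^{I^2}$, hence $\Phi(I^3)=\mathbf{1}_A\Phi(I^1)+\mathbf{1}_{A^c}\Phi(I^2)$, and by the local property of $g$-expectation $\mathcal{E}^{g}_S[\Phi(I^3)]=\mathcal{E}^{g}_S[\Phi(I^1)]\vee\mathcal{E}^{g}_S[\Phi(I^2)]$. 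The only nontrivial point is admissibility of $I^3$, and this is exactly where the super-additivity of $g$ is used: it makes $\widetilde g(t,z)=-g(t,-z)$ sub-additive, hence $\mathcal{E}^{\widetilde g}$ sub-additive, so from $\int_0^T e^{-rt}\,\d I^3_t\le\int_0^T e^{-rt}\,\d I^1_t+\int_0^T e^{-rt}\,\d I^2_t$ and monotonicity one gets $\mathcal{E}^{\widetilde g}\big[\int_0^T e^{-rt}\,\d I^3_t\big]\le\mathcal{E}^{\widetilde g}\big[\int_0^T e^{-rt}\,\d I^1_t\big]+\mathcal{E}^{\widetilde g}\big[\int_0^T e^{-rt}\,\d I^2_t\big]<\infty$; thus $I^3\in\mathcal{A}_S(I^{\star})$. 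Consequently $\{\mathcal{E}^{g}_S[\Phi(I)]:I\in\mathcal{A}_S(I^{\star})\}$ is directed upward, so there is a sequence $(I^n)\subseteq\mathcal{A}_S(I^{\star})$, which may be chosen with $\mathcal{E}^{g}_S[\Phi(I^n)]\ge\mathcal{E}^{g}_S[\Phi(I^{\star})]$ for all $n$, such that $\mathcal{E}^{g}_S[\Phi(I^n)]\uparrow V_S$.

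For the supermartingale inequality, fix $\tau\le\sigma$ in $\mathcal{T}_{0,T}$. Agreeing with $I^{\star}$ on $[0,\sigma)$ forces agreement on $[0,\tau)$, so $\mathcal{A}_\sigma(I^{\star})\subseteq\mathcal{A}_\tau(I^{\star})$, and time consistency of the $g$-conditional expectation gives $\mathcal{E}^{g}_\tau[\mathcal{E}^{g}_\sigma[\Phi(I)]]=\mathcal{E}^{g}_\tau[\Phi(I)]\le V_\tau$ for every $I\in\mathcal{A}_\sigma(I^{\star})$. Taking $(I^n)\subseteq\mathcal{A}_\sigma(I^{\star})$ with $\mathcal{E}^{g}_\sigma[\Phi(I^n)]\uparrow V_\sigma$ as above and applying Fatou's lemma for $g$-expectations (Proposition \ref{a21}) one obtains \[ \mathcal{E}^{g}_\tau[V_\sigma]\le\liminf_{n\to\infty}\mathcal{E}^{g}_\tau\big[\mathcal{E}^{g}_\sigma[\Phi(I^n)]\big]=\liminf_{n\to\infty}\mathcal{E}^{g}_\tau[\Phi(I^n)]\le V_\tau, \] which, together with the first paragraph, is the strong $\mathcal{E}^{g}$-supermartingale property. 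For the optimality of $I^{\star}$ note $V_S\ge\mathcal{E}^{g}_S[\Phi(I^{\star})]$ since $I^{\star}\in\mathcal{A}_S(I^{\star})$; were this strict on a set of positive probability, then by definition of the essential supremum there would be $I\in\mathcal{A}_S(I^{\star})$ and $B\in\mathcal{F}_S$ with $\P(B)>0$ and $\mathcal{E}^{g}_S[\Phi(I)]>\mathcal{E}^{g}_S[\Phi(I^{\star})]$ on $B$, so the pasted plan $\hat I:=\mathbf{1}_B I+\mathbf{1}_{B^c}I^{\star}\in\mathcal{A}_S(I^{\star})\subseteq\mathcal{A}_g$ would satisfy $\mathcal{E}^{g}_S[\Phi(\hat I)]\ge\mathcal{E}^{g}_S[\Phi(I^{\star})]$ with strict inequality on $B$, and applying $\mathcal{E}^{g}$, time consistency and the strict comparison theorem (Proposition \ref{a21}) would give $\Pi(\hat I)>\Pi(I^{\star})=V^{\star}$, a contradiction; hence $V_S=\mathcal{E}^{g}_S[\Phi(I^{\star})]$ and $I^{\star}$ is optimal for \eqref{ii18}.

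The main obstacle is the admissibility of pasted investment plans in the second step, which is precisely what forces the super-additivity hypothesis on $g$ (without it $\mathcal{E}^{\widetilde g}$ need not be sub-additive and $\mathcal{A}_S(I^{\star})$ need not be closed under pasting), together with the interchange of $\mathcal{E}^{g}_\tau$ with the essential supremum along the directed sequence. Everything else --- time consistency, the local property, comparison and strict comparison --- is standard $g$-expectation machinery from Appendix \ref{Appendix}; one should nonetheless check that these properties, usually stated for deterministic times, carry over to the stopping times $\tau,\sigma,S$, which they do here.
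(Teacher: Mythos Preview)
Your argument is correct and, for the supermartingale part, follows the paper's route essentially verbatim: the paper also proves an ``upwards directed'' lemma (Lemma~\ref{l1}) by pasting $I^1,I^2$ on the set $\{\mathcal{E}^{g}_t[\mathcal{J}(I^1)]\ge\mathcal{E}^{g}_t[\mathcal{J}(I^2)]\}$, invokes super-additivity of $g$ solely to keep the pasted plan admissible, uses the local property to identify the pasted payoff with the maximum, and then pushes an increasing sequence through $\mathcal{E}^{g}_s$ via time consistency. Your Fatou step plays the role of the paper's monotone interchange, and your remark that the arguments must be checked at stopping times mirrors the paper's ``the proof for stopping times is similar.''

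Where you diverge is the optimality of $I^{\star}$ for \eqref{ii18}. The paper re-runs the existence/uniqueness machinery of Theorem~\ref{ii1} inside $\mathcal{A}_S(I^{\star})$ to produce a unique optimizer $I^{S,\star}$, and then argues that if $I^{S,\star}\neq I^{\star}$ on $[S,T]$, strict comparison lifted through $\mathcal{E}^{g}=\mathcal{E}^{g}\!\circ\mathcal{E}^{g}_S$ would contradict global optimality. Your approach avoids this detour: you paste a would-be improver with $I^{\star}$ on an $\mathcal{F}_S$-set and apply strict comparison directly. This is shorter and uses only ingredients already established (pasting closure of $\mathcal{A}_S(I^{\star})$, local property, strict comparison), whereas the paper's version has the advantage of exhibiting that \eqref{ii18} itself admits a unique optimizer, a slightly stronger intermediate statement.
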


\begin{proof}
We start by proving the supermartingale property. We prove it only for deterministic times, since the proof for stopping times is similar. For simplicity, Let
\begin{displaymath}
\mathcal{J}(I):=\int_0^T e^{-rt}\pi(X_t,C^{I}_t) \d t- \int_0^T e^{-rt} \d I_t,
\end{displaymath}
Now for any $0\leq s\leq t \leq T$, we claim that there exists a sequence $\{I^n\}_{n\in\mathbb{N}}\subset\mathcal{A}_t(I^{\star})\subset\mathcal{A}_s(I^{\star})$ such that $\mathcal{E}^{g}_t[\mathcal{J}(I^n)]$ is increasing in $n$ and
	 \begin{displaymath}
	 V_t=\lim_{n\rightarrow\infty}\mathcal{E}^{g}_t\big[\mathcal{J}(I^n)\big].
 \end{displaymath}
Such a claim is actually a consequence of Lemma \ref{l1} below. By Proposition \ref{A2} it follows
	 \begin{align*}
	 \mathcal{E}^{g}_s\big[V_t\big]&=\mathcal{E}^{g}_s\big[\lim_{n\rightarrow\infty} \mathcal{E}^{g}_t\big[\mathcal{J}(I^n)\big]\big]=\lim_{n\rightarrow\infty}\mathcal{E}^{g}_s\big[\mathcal{E}^{g}_t\big[\mathcal{J}(I^n)\big]\big]\\
	 &=\lim_{n\rightarrow\infty}\mathcal{E}^{g}_s\big[\mathcal{J}(I^n)\big]\leq \esssup_{I\in\mathcal{A}_s(I^{\star})}\mathcal{E}^{g}_s\big[\mathcal{J}(I^n)\big]=V_s.
	 \end{align*}
	 Therefore, $\{V_t\}_{t\in[0,T]}$ is an $\mathcal{E}^{g}$-supermartingale.
	
	We now move one by showing the second claim of the proposition; that is, the optimal investment plan for problem \eqref{OC}, $I^{\star}$, is also optimal for \eqref{ii18}. Arguing as in the proof of Theorem \ref{ii1}, we can show that there exists a unique investment plan $I^{S,\star}$ which is optimal for problem \eqref{ii18}.

Suppose now that $I^{S,\star}$ and $I^{\star}$ are distinguishable on $[S,T]$. Then we have
	 \begin{displaymath}
	 	\mathcal{E}_S^{g}\big[\mathcal{J}(I^{S,\star})\big]>\mathcal{E}_S^{g}\big[\mathcal{J}(I^{\star})\big],
	 \end{displaymath}
	 which, by the strict comparison theorem for $g$-expectation (cf.\ Proposition \ref{a21} in Appendix \ref{Appendix}), gives
	 \begin{displaymath}
	 \mathcal{E}^{g}\big[\mathcal{J}(I^{S,\star})\big]=\mathcal{E}^{g}\big[\mathcal{E}_S^{g}\big[\mathcal{J}(I^{S,\star})\big]\big]>\mathcal{E}^{g}\big[\mathcal{E}_S^{g}\big[\mathcal{J}(I^{\star})\big]\big]=\mathcal{E}^{g}\big[\mathcal{J}(I^{\star})\big].
	 \end{displaymath}
	 Hence a contradiction is reached since $I^{S,\star}\in\mathcal{A}_g$ and the proof is complete.
\end{proof}

\begin{lemma}
\label{l1}
Under the same assumption as Proposition \ref{p1}. Let
$I^{\star}$ be optimal for \eqref{OC}, and recall \eqref{AS}. Then for any $0\leq s\leq t\leq T$, the family $\big\{\mathcal{E}^{g}_t\big[\mathcal{J}(I)\big],\, I \in \mathcal{A}_t(I^{\star})\big\}$ is upwards directed\footnote{That is, for any $(I^1,I^2) \in \mathcal{A}_t(I^{\star})$ there exists $I \in \mathcal{A}_t(I^{\star})$ such that $\mathcal{E}^{g}_t\big[\mathcal{J}(I)\big] \geq \max\big\{\mathcal{E}^{g}_t\big[\mathcal{J}(I^1)\big],\mathcal{E}^{g}_t\big[\mathcal{J}(I^2)\big]\big\}$.} and $\mathcal{A}_t(I^{\star})\subseteq \mathcal{A}_s(I^{\star})$.
\end{lemma}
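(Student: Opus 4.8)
The plan is to establish the two claims separately, starting with the easier inclusion $\mathcal{A}_t(I^{\star}) \subseteq \mathcal{A}_s(I^{\star})$ and then turning to the upwards-directedness, which is the crux of the argument. For the inclusion: if $I \in \mathcal{A}_t(I^{\star})$, then $I$ agrees with $I^{\star}$ on $[0,t)$, and since $s \leq t$ this forces $I|_{[0,s)} = I^{\star}|_{[0,s)}$; the nondecreasing, right-continuous, $\mathbb{F}$-adapted structure and the integrability constraint $\mathcal{E}^{\widetilde{g}}[\int_0^T e^{-rt}\d I_t]<\infty$ are intrinsic to $I$ and carry over verbatim. Hence $I \in \mathcal{A}_s(I^{\star})$, proving the inclusion.

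For the upwards-directedness, fix $I^1, I^2 \in \mathcal{A}_t(I^{\star})$ and set $A := \{ \omega : \mathcal{E}^{g}_t[\mathcal{J}(I^1)] \geq \mathcal{E}^{g}_t[\mathcal{J}(I^2)] \} \in \mathcal{F}_t$. The natural candidate is to ``splice'' the two plans at time $t$: define $I := I^1$ on $[0,t)$ (which equals $I^{\star}$, hence equals $I^2$ too there), and on $[t,T]$ let $I$ follow $I^1$ on $A$ and $I^2$ on $A^c$. Concretely, $I_u := \mathbf{1}_A (I^1_u - I^1_{t^-}) + \mathbf{1}_{A^c}(I^2_u - I^2_{t^-}) + I^{\star}_{t^-}$ for $u \geq t$ (with the obvious right-continuous convention for the jump at $t$), and $I_u := I^{\star}_u$ for $u < t$. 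Because $A \in \mathcal{F}_t$, the process $I$ is nondecreasing, right-continuous and $\mathbb{F}$-adapted; it agrees with $I^{\star}$ on $[0,t)$; and the cost constraint holds since $\int_0^T e^{-ru}\d I_u = \mathbf{1}_A \int_0^T e^{-ru}\d I^1_u + \mathbf{1}_{A^c}\int_0^T e^{-ru}\d I^2_u$ is dominated by $\int_0^T e^{-ru}\d I^1_u + \int_0^T e^{-ru}\d I^2_u$, whose $\mathcal{E}^{\widetilde{g}}$-value is finite by subadditivity of $\mathcal{E}^{\widetilde g}$ (equivalently, super-additivity of $g$, which is assumed here). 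Thus $I \in \mathcal{A}_t(I^{\star})$.

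It remains to check $\mathcal{E}^g_t[\mathcal{J}(I)] \geq \max\{\mathcal{E}^g_t[\mathcal{J}(I^1)], \mathcal{E}^g_t[\mathcal{J}(I^2)]\}$. Since the dynamics of $C^I$ on $[t,T]$ are driven only by the increments of $I$ after $t^-$ together with the common value $C^{I^{\star}}_{t^-}$, the capacity satisfies $C^I = C^{I^1}$ on $A$ and $C^I = C^{I^2}$ on $A^c$ over $[t,T]$; the same splitting then holds for the random variable $\mathcal{J}(I)$ up to the part of the payoff accrued before $t$, which is identical across all three plans. By the local property of $g$-expectation (Assumption \ref{a2} and Appendix \ref{Appendix}), $\mathbf{1}_A \mathcal{E}^g_t[\mathcal{J}(I)] = \mathbf{1}_A \mathcal{E}^g_t[\mathcal{J}(I^1)]$ and $\mathbf{1}_{A^c}\mathcal{E}^g_t[\mathcal{J}(I)] = \mathbf{1}_{A^c}\mathcal{E}^g_t[\mathcal{J}(I^2)]$, whence $\mathcal{E}^g_t[\mathcal{J}(I)] = \mathbf{1}_A \mathcal{E}^g_t[\mathcal{J}(I^1)] + \mathbf{1}_{A^c}\mathcal{E}^g_t[\mathcal{J}(I^2)] = \max\{\mathcal{E}^g_t[\mathcal{J}(I^1)], \mathcal{E}^g_t[\mathcal{J}(I^2)]\}$ by the definition of $A$. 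This is exactly the required inequality (with equality, in fact).

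The main obstacle I anticipate is not the spliced construction itself but the careful bookkeeping of the initial-jump conventions at time $t$ (so that $I$ is genuinely right-continuous with $I_{t^-} = I^{\star}_{t^-}$ and the Lebesgue--Stieltjes integral $\int_{[t,T]}$ picks up any lump at $t$ correctly), together with verifying that the only structural property of $g$-expectation actually needed for the payoff splitting is the local property — the super-additivity of $g$ is used solely to keep the cost constraint closed under taking sums. If one prefers to avoid the local property, an alternative is to argue directly via the BSDE representation of $\mathcal{E}^g_t$ and the comparison theorem applied pathwise on $A$ and $A^c$; either route works, but the local-property argument is the cleanest.
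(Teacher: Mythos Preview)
Your proof is correct and follows essentially the same approach as the paper's: the same splicing construction $I := I^1\mathds{1}_A + I^2\mathds{1}_{A^c}$ (your more elaborate formula collapses to this since $I^1 = I^2 = I^\star$ on $[0,t)$), the same use of super-additivity of $g$ for admissibility, and the same appeal to the local property of $\mathcal{E}^g_t$ to obtain $\mathcal{E}^g_t[\mathcal{J}(I)] = \max\{\mathcal{E}^g_t[\mathcal{J}(I^1)],\mathcal{E}^g_t[\mathcal{J}(I^2)]\}$. The only cosmetic difference is that the paper explicitly cites Assumption~\ref{a1}-(3) when splitting $\pi(X_t,C^I_t)$, which your argument handles implicitly.
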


\begin{proof}
We first show that the family $\big\{\mathcal{E}^{g}_t\big[\mathcal{J}(I)],\, I\in\mathcal{A}_t(I^{\star})\big\}$ is upwards directed. For any $I^1,I^2\in\mathcal{A}_t(I^{\star})$, set $A:=\big\{\omega \in \Omega:\, \mathcal{E}^{g}_t\big[\mathcal{J}(I^1)\big](\omega)\geq \mathcal{E}^{g}_t\big[\mathcal{J}(I^2)\big](\omega)\big\}$ and $I:= I^1 \mathds{1}_{A}+I^2 \mathds{1}_{A^c}$. It is easy to check that $I\in\mathcal{A}_t(I^{\star})$ by the super-additivity of $g$.

By \eqref{i2}, we obtain that $C^I=C^{I^1}\mathds{1}_A+C^{I^2}\mathds{1}_{A^c}$. Then, Assumption \ref{a1}-(3) implies that $\pi(X_t,C^I_t)=\pi(X_t,C^{I^1}_t)\mathds{1}_A+\pi(X_t,C^{I^2}_t)\mathds{1}_{A^c}$. Hence, we have $\mathcal{J}(I)=\mathcal{J}(I^1)\mathds{1}_A + \mathcal{J}(I^2)\mathds{1}_{A^c}$. This fact, together with  Proposition \ref{A2} in Appendix \ref{Appendix}, yield that
\begin{align*}
\mathcal{E}^{g}_t\big[\mathcal{J}(I)\big]& = \mathcal{E}^{g}_t\big[\mathcal{J}(I^1)\mathds{1}_A + \mathcal{J}(I^2)\mathds{1}_{A^c}\big] = \mathcal{E}^{g}_t\big[\mathcal{J}(I^1)\big]\mathds{1}_A + \mathcal{E}^{g}_t\big[\mathcal{J}(I^2)\big]\mathds{1}_{A^c} \\
&=\max\big\{\mathcal{E}^{g}_t\big[\mathcal{J}(I^1)\big],\mathcal{E}^{g}_t\big[\mathcal{J}(I^2)\big]\big\}.
\end{align*}
Hence the claim follows and, in particular, we may choose an increasing sequence $\big\{\mathcal{E}^{g}_t\big[\mathcal{J}(I^n)\big]\big\}_{n\in\mathbb{N}}$ such that
\begin{displaymath}
\esssup_{I\in\mathcal{A}_t(I^{\star})}\mathcal{E}^{g}_t\big[\mathcal{J}(I)\big] = \lim_{n\rightarrow\infty}\mathcal{E}^{g}_t\big[\mathcal{J}(I^n)\big].
\end{displaymath}
Clearly, for any $s\leq t$, $\mathcal{A}_t(I^{\star})\subseteq \mathcal{A}_s(I^{\star})$. The proof is complete.
\end{proof}

Assuming that the economic shock $X$ is deterministic, we may conjecture that the optimal policy is deterministic as well. It means that, when the firm has full knowledge about the future economic conditions, also the investment plan is certain. This is in fact proven in the next proposition.
\begin{proposition}
Let $h:E \times \R \to \R$ be defined as
\begin{displaymath}
		h(x,\ell):=\begin{cases}
		\pi_c(x,-\frac{e^{-\delta t}}{\ell})e^{-(r+\delta)t}, & \ell<0,\\
		-\ell, &\ell\geq 0.
		\end{cases}
	\end{displaymath}
and assume that $X$ is deterministic and such $\int_0^T |h(X_t,\ell)| \d t < \infty$ for any $\ell \in\mathbb{R}$. Suppose also that the driver $g$ satisfies Assumption \ref{a2}.

Then, the optimal investment plan is also deterministic.
\end{proposition}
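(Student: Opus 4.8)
The plan is to combine the base-capacity characterization of Theorem~\ref{i6} with the representation \eqref{repr-g} of $g$-expectation, exploiting that when $X$ is deterministic the only source of randomness left in the problem is the choice of the density $\theta\in D$. First I would observe that, under the current hypotheses, Theorem~\ref{i6} applies: there is a unique $\mathbb{F}$-progressively measurable $\ell^\theta$ solving the backward equation \eqref{ii6} under each $\P^\theta$, and the associated base capacity policy $I^{\ell^\theta,\delta}$ is optimal whenever it is admissible and $\P^\theta$ realizes the worst-case scenario \eqref{ii12}. So the goal reduces to producing a \emph{deterministic} $\theta$ — equivalently a deterministic Girsanov kernel — for which the corresponding base capacity $\ell^\theta$ is deterministic and the worst-case identity \eqref{ii12} holds.

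The key step is the reduction of \eqref{ii6} to a deterministic backward equation. When $X$ is deterministic, the integrand $e^{-(r+\delta)(s-t)}\pi_c(X_s, e^{-\delta s}\sup_{t\le u\le s}(\ell_u e^{\delta u}))$ is, for a deterministic candidate $\ell$, a deterministic function of $s$; hence $\E^{\P^\theta}_t[\,\cdot\,]$ of it equals the plain Lebesgue integral, \emph{independently of $\theta$}. Thus the deterministic Bank--El Karoui backward equation
\[
\int_t^T e^{-(r+\delta)(s-t)}\pi_c\Big(X_s, e^{-\delta s}\sup_{t\le u\le s}\big(\ell_u e^{\delta u}\big)\Big)\d s = 1, \qquad t<T,
\]
has (by the same existence/uniqueness argument invoked for Theorem~\ref{i6}, i.e.\ \cite{BE, RS}) a unique deterministic solution $\ell^{\mathrm{det}}$, and by uniqueness in \eqref{ii6} we must have $\ell^\theta = \ell^{\mathrm{det}}$ for \emph{every} $\theta\in D$. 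Consequently the capacity $C^{\ell^{\mathrm{det}},\delta}$ defined through \eqref{ii4} and the policy $I^{\ell^{\mathrm{det}},\delta}$ of \eqref{ii5} are deterministic, and the function $h$ in the statement is precisely the integrand appearing (after the change of variables $\ell_s = -e^{-\delta s}/(\text{running quantity})$) in this deterministic backward equation, which is why the hypothesis $\int_0^T|h(X_t,\ell)|\,\d t<\infty$ is imposed: it guarantees finiteness/well-posedness of the deterministic equation and admissibility, via \eqref{i1}-type estimates, of $I^{\ell^{\mathrm{det}},\delta}$.

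It remains to verify the worst-case condition \eqref{ii12} for some $\P^\theta$. Here I would use Proposition~\ref{prop:A1}: since $C^{\ell^{\mathrm{det}},\delta}$ and $I^{\ell^{\mathrm{det}},\delta}$ are deterministic, the payoff $\int_0^T e^{-rt}\pi(X_t,C^{\ell^{\mathrm{det}},\delta}_t)\d t - \int_0^T e^{-rt}\d I^{\ell^{\mathrm{det}},\delta}_t$ is a deterministic constant, call it $K$; hence for every $\theta\in D$ one has $\E^{\P^\theta}[K] = K$, and the infimum in \eqref{repr-g} reduces to $\inf_{\theta\in D}\alpha_{0,T}(\theta) = \inf_{\theta\in D}\E^{\P^\theta}[\int_0^T f(r,\theta_r)\d r]$, which by Assumption~\ref{a2}(iv) (so $f(t,\omega,0)=\sup_z g(t,\omega,z)\ge g(t,\omega,0)=0$, with equality at $\theta\equiv 0$) is attained at $\theta\equiv 0$, i.e.\ at $\P_0$ — a deterministic kernel. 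Thus $\P_0\in\mathcal{P}_g(I^{\ell^{\mathrm{det}},\delta})$ and \eqref{ii12} holds, so Theorem~\ref{i6} yields that $I^{\ell^{\mathrm{det}},\delta}$, which is deterministic, is optimal; by the uniqueness part of Theorem~\ref{ii1} it is \emph{the} optimal plan.

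The main obstacle I anticipate is the bookkeeping around the running-supremum / change-of-variables that turns \eqref{ii6} into an equation whose integrand is exactly $h(X_s,\ell_s)$, together with carefully checking that $\int_0^T|h(X_t,\ell)|\,\d t<\infty$ delivers both solvability of the deterministic backward equation (so that one may cite \cite{BE}) and the integrability needed to conclude $I^{\ell^{\mathrm{det}},\delta}\in\mathcal{A}_g$. The measure-theoretic step — that $\ell^\theta$ is deterministic — is then immediate from uniqueness, and the worst-case verification is essentially trivial once one notes the payoff is constant.
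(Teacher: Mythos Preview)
Your approach is essentially the same as the paper's: reduce the backward equation \eqref{ii6} to a deterministic Bank--El Karoui equation (invoking \cite{BE}), observe that the base capacity and hence $C^{\ell,\delta}$, $I^{\ell,\delta}$ are deterministic, then verify the worst-case condition \eqref{ii12} using that the payoff is a constant and apply Theorem~\ref{i6}. The paper is terser in the last step --- it simply uses the constant-preserving property of $\mathcal{E}^g$ (Proposition~\ref{A2}(3), which requires Assumption~\ref{a2}(iv)) to get $\mathcal{E}^g[K]=K=\E^{\P^\theta}[K]$ and does not explicitly isolate the penalty term --- whereas you try to name the minimizing kernel.

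One small slip: you claim the infimum $\inf_{\theta\in D}\alpha_{0,T}(\theta)$ is attained at $\theta\equiv 0$ because ``$f(t,\omega,0)=\sup_z g(t,\omega,z)\ge g(t,\omega,0)=0$, with equality at $\theta\equiv 0$.'' That equality need not hold: Assumption~\ref{a2} allows, e.g., $g(z)=\tfrac12 z$ for $z\le 1$ and $g(z)=1-\tfrac12 z$ for $z>1$, for which $f(0)=\sup_z g(z)=\tfrac12>0$. What is true is that $\alpha_{0,T}(\theta^\ast)=0$ for any $\theta^\ast_t$ in the superdifferential of the concave map $z\mapsto g(t,\cdot,z)$ at $z=0$ (equivalently, the kernel produced by the BSDE with constant terminal datum, whose $Z$-component vanishes); such $\theta^\ast$ always exists and lies in $[-\kappa,\kappa]$, but it need not be $0$. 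With this correction your verification of \eqref{ii12} goes through, and the rest of your plan matches the paper.
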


\begin{proof}
	By Theorem 2 in \cite{BE}, there exists a unique function $\ell':[0,T)\rightarrow\mathbb{R}\cup\{-\infty\}$ such that
	\begin{displaymath}
		\int_t^T h(X_s,\ell'_s) \d s = e^{-(r+\delta)t}.
	\end{displaymath}
	For any $\theta\in D$, set $\ell^{\theta}_t=-\frac{e^{-(r+\delta)t}}{\ell'_t}$. Let $C^{\ell^\theta,\delta}$ be the capacity that tracks $\ell^\theta$ at depreciation rate $\delta$, and $I^{\ell^\theta,\delta}$ be the base capacity policy with depreciation rate $\delta$ and base capacity $\ell^\theta$. Clearly, being $\ell'$ deterministic, also all the above processes are deterministic. Moreover,
	\begin{displaymath}
	\E^{P^\theta}_t\bigg[\int_t^T e^{-(r+\delta)s}\pi_c(X_s,e^{-\delta s}\sup_{t\leq u\leq s}(\ell_u^\theta e^{\delta u}))\d u\bigg]=e^{-(r+\delta)t}.
	\end{displaymath}
	However, it also holds that
	\begin{align*}
	& \mathcal{E}^{g}\bigg[\int_0^T e^{-rt} \pi (X_t,C_t^{\ell^\theta,\delta})\d t - \int_0^T e^{-rt}\d I_t^{\ell^\theta,\delta}\bigg] \\
&= \int_0^T e^{-rt} \pi (X_t,C_t^{\ell^\theta,\delta})\d t - \int_0^T e^{-rt}\d I_t^{\ell^\theta,\delta}\\
	& = \E^{P^\theta}\bigg[\int_0^T e^{-rt} \pi (X_t,C_t^{\ell^\theta,\delta})\d t - \int_0^T e^{-rt}\d I_t^{\ell^\theta,\delta}\bigg],
	\end{align*}
where we have used the constant preserving property for the $g$-expectation in the first equality. Hence, the desired result then follows from Theorem \ref{i6}.
\end{proof}


\section{Explicit Solution in an Homogeneous Setting with Infinite Horizon}
\label{sec:casestudy}

\subsection{Setting}
\label{sec:settingcase}

In this section we provide the explicit solution to the irreversible investment problem \eqref{OC} in the following setting:
\begin{assumption}
\label{ass:case}
\begin{itemize}
\hspace{10cm}
\item[(i)] $T=+\infty$ and $\delta=0$;
\vspace{0.15cm}

\item[(ii)] under $\P_0$,
$$X^x_t = x \exp\Big(\big(b - \frac{1}{2}\sigma^2\big) t + \sigma B_t\Big), \quad t\geq 0,\,\,x>0,$$
for some $b\in\R$ and $\sigma>0$;
\vspace{0.15cm}

\item[(iii)] $g(t,z)=-\kappa|z|,$ for any $(t,z)\in \R_+ \times \R$ and for some $\kappa>0$.
\vspace{0.15cm}

\item[(iv)] $\pi(x,c) = \frac{1}{1-\alpha} x^{\alpha}c^{1-\alpha}$, for some elasticity $\alpha\in(0,1)$.
\vspace{0.15cm}

\item[(v)]  $r> b + \sigma\kappa + \frac{1}{2}(\sigma + \kappa)^2$.
\end{itemize}
\end{assumption}

\begin{remark}
Notice that condition (v) above is consistent with the condition proposed in \cite{RS} (see Example 7.3 therein), that can indeed be obtained by taking $\kappa=0$.
\end{remark}

Our approach will be to exploit the sufficiency of the first-order conditions for the optimality (that actually also holds for $T=+\infty$; cf.\ Theorem \ref{ii13}) and construct a candidate optimal solution with the help of a suitable stochastic backward equation in the spirit of Theorem \ref{i6}.

Let
$$\Xi^\kappa=\Big\{\{\xi_t\}_{t\geq 0}\,\Big|\, \xi \textrm{ is $\mathbb{F}$-progressively measurable and } |\xi_t| \leq \kappa,\,\,\forall t\geq 0\,\,\P_0\textrm{-a.s.}\Big\},$$
and for any $\xi\in\Xi^\kappa$ and $t\geq 0$ set
\begin{displaymath}
\epsilon^{\xi}_t:= \exp\Big(\int_0^t \xi_s \d B_s-\frac{1}{2}\int_0^t\xi^2_s \d s\Big).
\end{displaymath}
The latter process defines a probability measure $\P^{\xi}$ such that $\frac{\d\P^{\xi}}{\d\P_0}\Big|_{\mathcal{F}_t}=\epsilon^{\xi}_t$. In the following, we will denote by $\E^{\xi}$ the expectation under $\P^{\xi}$, for any given $\xi\in\Xi^\kappa$.
Finally, let $\mathcal{A}_\kappa^\infty$ be the collection of all nondecreasing, right-continuous and $\mathbb{F}$-adapted processes $I$ such that  $I_{0^-}= 0$ and
\begin{displaymath}
\sup_{\xi\in\Xi^\kappa}\E^{\xi}\bigg[\int_0^\infty e^{-rt} \d I_t\bigg]<\infty.
\end{displaymath}

Within such a framework our aim is to solve
\begin{equation}
\label{e1}
\sup_{I\in\mathcal{A}^\infty_\kappa}\inf_{\xi\in\Xi^\kappa}\E^{\xi}\bigg[\int_0^\infty e^{-rt} \pi(X^x_t,C^{c,I}_t) \d t - \int_0^\infty e^{-rt} \d I_t\bigg],
\end{equation}
where, as usual, $C^{c,I}$ is the production capacity induced by any admissible investment plan $I$ with initial capacity $c$ (cf.\ \eqref{solC}, but now with $\delta=0$; i.e.\ $C^{c,I}_t=c + I_t$).

For any $I \in \mathcal{A}^\infty_\kappa$ set
\begin{align*}
& \Xi(I):=\Big\{\xi\in\Xi^{\kappa}\,\Big|\,\inf_{\xi'\in\Xi^\kappa}\E^{\xi'}\bigg[\int_0^\infty e^{-rt} \pi(X^x_t,C^{c,I}_t) \d t - \int_0^\infty e^{-rt} \d I_t\bigg] \\
& = \E^{\xi}\bigg[\int_0^\infty e^{-rt} \pi(X^x_t,C^{c,I}_t) \d t - \int_0^\infty e^{-rt} \d I_t\bigg]\Big\}.
\end{align*}
In fact, $\Xi(I)$ can be regarded as the collection of worst-case kernels for the given investment plan $I$.

Similarly to Theorem \ref{ii13} and Theorem \ref{i6} (whose proofs indeed do hold also for $T=\infty$), we have the following results.
	\begin{theorem}
	\label{}
	An investment plan $I^{\star}$ is optimal if there exists some $\xi\in\Xi(I^{\star})$ such that the following conditions hold $\P^{\xi}$-a.s.
	\begin{equation}
	\label{thm:casestudy1}
	\begin{split}
	&\E^{\xi}_t\bigg[\int_t^\infty e^{-r(s-t)}\pi_c(X^x_s,C^{c,I^{\star}}_s) \d s\bigg]\leq 1, \quad \textrm{ for any } t \geq 0,\\
	&\int_0^{\infty}\Big(\E^{\xi}_t\bigg[\int_t^\infty e^{-r(s-t)}\pi_c(X^x_s,C^{c,I^{\star}}_s) \d s\bigg] - 1 \Big) \d I^{\star}_t = 0.
	\end{split}
	\end{equation}
	\end{theorem}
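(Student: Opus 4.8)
The plan is to reproduce, in the present homogeneous infinite-horizon setting, the argument of Theorem \ref{ii13}. The key preliminary remark is that for the driver $g(t,z)=-\kappa|z|$ the convex dual $f(t,\omega,\theta)=\sup_{z\in\R}\big(-\kappa|z|-z\theta\big)$ equals $0$ if $|\theta|\le\kappa$ and $+\infty$ otherwise. Hence, in the notation of Proposition \ref{prop:A1}, $D_g=\Xi^\kappa$, the penalty function is identically zero, $\alpha_{0,T}(\theta)\equiv 0$, and therefore $\mathcal{E}^{-\kappa}[\,\cdot\,]=\inf_{\xi\in\Xi^\kappa}\E^\xi[\,\cdot\,]$, while $\Xi(I)$ coincides with the worst-case set $\mathcal{P}_g(I)$. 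Consequently, the whole chain of (in)equalities in the proof of Theorem \ref{ii13} carries over with $T$ replaced by $+\infty$, $\delta$ by $0$, and every occurrence of $\alpha_{0,T}$ deleted; I describe the steps next.

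Fix an admissible plan $I\in\mathcal{A}^\infty_\kappa$ and let $\xi\in\Xi(I^{\star})$ satisfy \eqref{thm:casestudy1}, so that in particular $I^{\star}\in\mathcal{A}^\infty_\kappa$ (as is implicit in the definition of $\Xi(I^{\star})$); write $C^{\star}:=C^{c,I^{\star}}$, $C:=C^{c,I}$, and recall $C^{c,J}_t=c+J_t$ since $\delta=0$. Here $\Pi$ denotes the functional $J\mapsto\inf_{\xi'\in\Xi^\kappa}\E^{\xi'}\big[\int_0^\infty e^{-rt}\pi(X^x_t,C^{c,J}_t)\,\d t-\int_0^\infty e^{-rt}\,\d J_t\big]$ appearing in \eqref{e1}. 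Using that $\xi$ attains the infimum defining $\mathcal{E}^{-\kappa}$ for the plan $I^{\star}$, while for $I$ that infimum is no larger than its value computed under the same $\xi$, followed by the concavity inequality $\pi(X^x_t,C^{\star}_t)-\pi(X^x_t,C_t)\ge\pi_c(X^x_t,C^{\star}_t)\,(C^{\star}_t-C_t)$, the identity $C^{\star}_t-C_t=I^{\star}_t-I_t=\int_{[0,t]}\d(I^{\star}_s-I_s)$, and Fubini--Tonelli's theorem, one obtains
\begin{align*}
\Pi(I^{\star})-\Pi(I)\ \ge\ \E^{\xi}\bigg[\int_0^\infty e^{-rt}\Big(\int_t^\infty e^{-r(s-t)}\pi_c(X^x_s,C^{\star}_s)\,\d s-1\Big)\,\d\big(I^{\star}_t-I_t\big)\bigg].
\end{align*}
Splitting the last integral into the $\d I^{\star}$ and the $\d I$ parts, and replacing the inner integrand by its optional projection $\E^{\xi}_t\big[\int_t^\infty e^{-r(s-t)}\pi_c(X^x_s,C^{\star}_s)\,\d s\big]$ under $\P^{\xi}$ (Theorem 1.33 in \cite{J79}, exactly as in the proof of Theorem \ref{ii13}), the $\d I^{\star}$ term vanishes by the second equality in \eqref{thm:casestudy1}, whereas the $\d I$ term is nonpositive because its integrand is $\le 0$ by the first inequality in \eqref{thm:casestudy1} and $\d I$ is a nonnegative measure. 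It follows that $\Pi(I^{\star})-\Pi(I)\ge 0$; since $I\in\mathcal{A}^\infty_\kappa$ was arbitrary, $I^{\star}$ is optimal for \eqref{e1}.

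The only part that genuinely differs from the finite-horizon argument, and the step I expect to be the main obstacle, is to ensure that $\Pi(I)$ and $\Pi(I^{\star})$ are finite and that the Fubini--Tonelli interchange above is legitimate over the unbounded time interval. This is precisely where Assumption \ref{ass:case} enters: the Cobb--Douglas form of $\pi$, the geometric Brownian dynamics of $X^x$, the bound $|\xi|\le\kappa$, and especially condition (v), $r>b+\sigma\kappa+\tfrac{1}{2}(\sigma+\kappa)^2$, provide estimates of the type $\sup_{\xi\in\Xi^\kappa}\E^{\xi}\big[\int_0^\infty e^{-rt}\pi^{\star}(X^x_t,r,0)\,\d t\big]<\infty$ together with $\sup_{\xi\in\Xi^\kappa}\E^{\xi}\big[\int_0^\infty e^{-rt}\,\d J_t\big]<\infty$ for every admissible $J$, which bound all the integrals appearing above uniformly in $\xi$ and make the interchange rigorous. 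I would record these moment estimates as a preparatory lemma — they are in any case needed for the explicit construction of the optimal plan in the rest of the section — and then run the computation just sketched.
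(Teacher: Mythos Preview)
Your proposal is correct and follows exactly the route the paper takes: the paper does not give a separate proof of this statement but simply notes that the argument of Theorem \ref{ii13} carries over to $T=\infty$ (with $\delta=0$ and, in the $\kappa$-ignorance case, vanishing penalty $\alpha_{0,T}$), which is precisely what you spell out. Your additional care about the integrability needed for Fubini--Tonelli on the infinite horizon is appropriate and is indeed handled in the paper via Assumption \ref{ass:case}(v) and the surrounding lemmata.
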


\begin{theorem}
\label{thm:casestudy2}
	For each $\xi\in \Xi^\kappa$, let $\{\ell^\xi_t\}_{t\in[0,T]}$ be the unique $\mathbb{F}$-progressively measurable process such that it solves the following backward equation
	\begin{equation}
	\label{e2}
	\E^{\xi}_t\bigg[\int_t^\infty e^{-r(s-t)}\pi_c\big(X^x_s,\sup_{t\leq u\leq s}\big(\ell^\xi_u\big)\big) \d s\bigg] = 1,
	\end{equation}
	for all $t\geq 0$. Let $C^{c,\ell^\xi}:=C^{c,\ell^\xi,0}$ be the capacity that tracks $\ell^\xi$ at depreciation rate $\delta=0$ with initial value $c$ and $I^{\ell^\xi}:=I^{\ell^\xi,0}$ be the base capacity policy with depreciation rate $\delta=0$ and base capacity $\ell^\xi$. Then $I^{\ell^\xi}$ is optimal for the irreversible investment problem \eqref{e1} if it is admissible and $\xi\in \Xi(I^{\ell^\xi})$; that is,
	\begin{equation}
	\label{}
	\inf_{\xi'\in\Xi^\kappa}\E^{\xi'}\bigg[\int_0^\infty e^{-rt}\pi(X^x_t,C_t^{c,\ell^\xi})\d t - \int_0^\infty e^{-rt} \d I_t^{\ell^\xi}\bigg]=\E^{\xi}\bigg[\int_0^\infty e^{-rt}\pi(X^x_t,C_t^{c,\ell^\xi})\d t - \int_0^\infty e^{-rt} \d I_t^{\ell^\xi}\bigg].
	\end{equation}
\end{theorem}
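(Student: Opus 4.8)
The statement to be proven is Theorem~\ref{thm:casestudy2}, which is essentially the infinite-horizon, $\delta=0$, $\kappa$-ignorance specialization of Theorem~\ref{i6}. The plan is to argue exactly as in Theorem~\ref{i6}, checking that every ingredient survives the passage to $T=+\infty$ and the concrete choice $g(t,z)=-\kappa|z|$. First I would invoke Theorem~2 in \cite{BE} (or, equivalently, the infinite-horizon construction of \cite{RS}) to obtain, for each fixed $\xi\in\Xi^\kappa$, the existence and uniqueness of the $\mathbb{F}$-progressively measurable base-capacity process $\{\ell^\xi_t\}_{t\geq 0}$ solving the backward equation \eqref{e2} under $\P^\xi$; here one uses that $c\mapsto\pi_c(x,c)$ is continuous, strictly decreasing, and satisfies the Inada conditions of Assumption~\ref{a1}, together with the integrability afforded by Assumption~\ref{ass:case}(iv)--(v) (which guarantees that the relevant discounted potentials are finite, cf.\ the estimates used later in the section).

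Next I would verify that the capacity $C^{c,\ell^\xi}$ that tracks $\ell^\xi$ satisfies the sufficient first-order conditions \eqref{thm:casestudy1} under $\P^\xi$. This is the same two-line argument as in the proof of Theorem~\ref{i6}: by the very definition \eqref{ii4} of a tracking capacity with $\delta=0$ one has
\begin{displaymath}
C^{c,\ell^\xi}_s = c\vee\sup_{0\leq u\leq s}\ell^\xi_u \geq \sup_{t\leq u\leq s}\ell^\xi_u, \qquad \forall\, 0\leq t\leq s,
\end{displaymath}
so the monotonicity of $-\pi_c(x,\cdot)$ (strict concavity of $\pi$ in the second argument) combined with \eqref{e2} yields
\begin{displaymath}
\E^{\xi}_t\bigg[\int_t^\infty e^{-r(s-t)}\pi_c(X^x_s,C^{c,\ell^\xi}_s)\,\d s\bigg] \leq \E^{\xi}_t\bigg[\int_t^\infty e^{-r(s-t)}\pi_c\big(X^x_s,\sup_{t\leq u\leq s}\ell^\xi_u\big)\,\d s\bigg] = 1,
\end{displaymath}
which is the first inequality in \eqref{thm:casestudy1}. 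For the complementarity condition, one observes that at any time $t$ in the support of $\d I^{\ell^\xi}_t$ the running supremum defining the tracking capacity is attained from $t$ onwards, i.e.\ $C^{c,\ell^\xi}_s = \sup_{t\leq u\leq s}\ell^\xi_u$ for all $s\geq t$, so the conditional expectation above equals $1$ on the support of $\d I^{\ell^\xi}$ and the integral in the second line of \eqref{thm:casestudy1} vanishes. One should also record that $I^{\ell^\xi}$ is nondecreasing, right-continuous, $\mathbb{F}$-adapted with $I^{\ell^\xi}_{0^-}=0$ by construction in Definition~\ref{RS}.

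Finally I would feed these verified first-order conditions into the sufficiency direction (the infinite-horizon analogue of Theorem~\ref{ii13}, quoted just above as the preceding unnumbered theorem): given the hypothesis $\xi\in\Xi(I^{\ell^\xi})$ — which is precisely the requirement that $\P^\xi$ realizes the infimum over $\Xi^\kappa$ of the net expected profit accrued by $I^{\ell^\xi}$, spelled out in the displayed identity of the statement — the computation in the proof of Theorem~\ref{ii13} (the chain of equalities using $\pi(x,C^\star)-\pi(x,C)\geq\pi_c(x,C^\star)(C^\star-C)$ by concavity, the affine dependence $C^{c,I}_t=c+\int_0^t\d I_s$ with $\delta=0$, and Fubini--Tonelli) gives $\Pi(I^{\ell^\xi})\geq\Pi(I)$ for every admissible $I$, so $I^{\ell^\xi}$ is optimal for \eqref{e1}. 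The main obstacle, and the only place where the argument is not a verbatim transcription of Theorems~\ref{ii13} and~\ref{i6}, is the control of the various $\sigma$-integrability and tail conditions at $T=+\infty$: one must check that under Assumption~\ref{ass:case}(v) the process $e^{-rt}\pi^\star(X^x_t,r,0)$ and the maximal reversible capacity $c^\star(X^x_t,r,0)$ are integrable on $[0,\infty)$ uniformly over $\xi\in\Xi^\kappa$ (so that $\Pi$, $\mathcal{A}^\infty_\kappa$, and $\Xi(I)$ are well defined and the Fubini--Tonelli exchanges are licit), which is a direct computation with the geometric Brownian motion $X^x$ and the explicit Cobb--Douglas derivative $\pi_c(x,c)=x^\alpha c^{-\alpha}$, exactly the kind of estimate carried out in \cite{RS} under the analogous condition with $\kappa=0$; I expect this to be routine but slightly tedious, and it is where condition (v), with its $\sigma\kappa+\tfrac12(\sigma+\kappa)^2$ correction, is used.
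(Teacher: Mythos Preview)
Your proposal is correct and follows exactly the paper's approach: the paper does not give a separate proof of Theorem~\ref{thm:casestudy2} but simply remarks that the proofs of Theorems~\ref{ii13} and~\ref{i6} ``indeed do hold also for $T=\infty$,'' and you have faithfully reproduced (in fact, spelled out in more detail) precisely those two arguments---the verification of the first-order conditions via the tracking-capacity inequality and the support characterization, followed by the concavity/Fubini chain from the sufficiency theorem. Your identification of the $T=+\infty$ integrability checks under Assumption~\ref{ass:case}(v) as the only nontrivial adaptation is also accurate and in line with what the paper leaves implicit.
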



\subsection{Solving Problem \eqref{e1}}
\label{sec:solution}

For any $x>0$, $c\geq 0$, let
\begin{equation}
\label{eq:OCkappa}
v(x,c):=\sup_{I\in\mathcal{A}_{-\kappa}}\E^{-\kappa}\bigg[\int_0^\infty e^{-rt} \pi(X^x_t,C^{c,I}_t) \d t - \int_0^\infty e^{-rt} \d I_t\bigg],
\end{equation}
where $\mathcal{A}_{-\kappa}$ is the set of all nondecreasing, $\mathbb{F}$-adapted, right-continuous processes such that $I_{0^-}=0$ and $\E^{-\kappa}[\int_0^\infty e^{-rt} \d I_t]<\infty$. Denote by $I^{\star,-\kappa}$ the corresponding optimal policy and by $C^{\star,-\kappa,c}$ the induced optimal capacity.

We shall prove that
\begin{align}
\label{e1-bis}
& \sup_{I\in\mathcal{A}^\infty_\kappa}\inf_{\xi\in\Xi^\kappa}\E^{\xi}\bigg[\int_0^\infty e^{-rt} \pi(X^x_t,C^{c,I}_t) \d t - \int_0^\infty e^{-rt} \d I_t\bigg] \nonumber
\\
& = \E^{-\kappa}\bigg[\int_0^\infty e^{-rt} \pi(X^x_t,C^{\star,-\kappa,c}_t) \d t - \int_0^\infty e^{-rt} \d I^{\star,-\kappa}_t\bigg].
\end{align}
Our plan is now the following:

\begin{enumerate}
\item Solve problem \eqref{eq:OCkappa} and then show that $I^{\star,-\kappa}\in\mathcal{A}^\infty_\kappa$. This is accomplished in Section \ref{sec:solvingOCkappa} below.

\item Show, via a probabilistic verification argument, that \eqref{e1-bis} indeed holds. This is done in Section \ref{sec:verifying} below.
\end{enumerate}


\subsubsection{Solving \eqref{eq:OCkappa}}
\label{sec:solvingOCkappa}

Notice that under $\P^{-\kappa}$, the process $X^x$ of Assumption \ref{ass:case}-(ii) is still a geometric Brownian motion, but with drift $b-\sigma \kappa$; that is, under $\P^{-\kappa}$ one has for any $t\geq0$
$$X^x_t = x \exp\Big(\big(b - \sigma\kappa - \frac{1}{2}\sigma^2\big) t + \sigma B^{-\kappa}_t\Big), \quad x>0,$$
where, for any $T>0$, $B^{-\kappa}_t:=B_t + \kappa t$, $t\in[0,T]$, is a Brownian motion under $\P^{-\kappa}$  according to Girsanov theorem.

In order to determine the optimal irreversible investment plan for problem \eqref{eq:OCkappa} we can rely on the result of Proposition 7.1 in \cite{RS}, where \eqref{eq:OCkappa} is solved under the more general case of an exponential L\'evy dynamics for the process $X$. In particular, the following holds.
\begin{theorem}
\label{thm:t1}
 	Under Assumption \ref{ass:case}, $\ell^{-\kappa}$ is the optimal base capacity for problem \eqref{eq:OCkappa}, where $\ell^{-\kappa}_t=KX^x_t$, $t\geq 0$, and
 \begin{equation}
\label{K-k}
 	K=K_{-\kappa}:=\left(\E^{-\kappa}\bigg[\int_0^\infty e^{-rs} \inf_{0\leq u\leq s} \Big(\frac{{X^{1}_{s}} }{{X^{1}_{u}}}\Big)^\alpha \d s\bigg]\right)^{\frac{1}{\alpha}}.
 \end{equation}
 Hence,
\begin{equation}
\label{I-k}
I^{\star,-\kappa}_t:= \sup_{0\leq s\leq t}\big(K X^x_s - c\big) \vee 0, \quad t \geq 0, \qquad I^{\star,-\kappa}_{0^-}=0,
\end{equation}
is optimal for problem \eqref{eq:OCkappa},and the induced optimal production capacity is
\begin{equation}
\label{C-k}
C^{\star,-\kappa,c}_t:= c \vee \sup_{0\leq s\leq t}\big(K X_s\big), \quad t \geq 0, \qquad C^{\star,-\kappa,c}_{0^-}=c.
\end{equation}
\end{theorem}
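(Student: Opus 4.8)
The plan is to reduce \eqref{eq:OCkappa} to a classical single-prior irreversible investment problem and then to solve the associated base-capacity backward equation in closed form. Observe first that \eqref{eq:OCkappa} is exactly a problem of the type treated in \cite{RS} with $T=+\infty$, $\delta=0$, driven by the single measure $\P^{-\kappa}$, under which $X^x_t=x\exp\big((b-\sigma\kappa-\tfrac12\sigma^2)t+\sigma B^{-\kappa}_t\big)$ is a geometric Brownian motion. With $\pi(x,c)=\tfrac{1}{1-\alpha}x^\alpha c^{1-\alpha}$ one has $\pi_c(x,c)=x^\alpha c^{-\alpha}$, hence $c^{\star}(x,r,0)=r^{-1/\alpha}x$ and $\pi^{\star}(x,r,0)$ is a positive multiple of $x$; since a geometric Brownian motion has all exponential moments and Assumption \ref{ass:case}-(v) leaves ample room for the discount rate $r$, the integrability hypotheses of \cite{RS} (in their $T=+\infty$ form, cf.\ Assumption B.2 and Proposition 7.1 therein) are met. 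Consequently, Proposition 7.1 in \cite{RS} applies (our geometric-Brownian-motion case being the no-jump instance of its exponential L\'evy setting): the optimal policy of \eqref{eq:OCkappa} is the base capacity policy generated by the unique $\mathbb{F}$-progressively measurable process $\ell^{-\kappa}$ solving the backward equation \eqref{e2} with $\xi\equiv-\kappa$, and by strict concavity of $\pi(x,\cdot)$ it is the unique optimizer.

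It then remains to solve \eqref{e2} explicitly, which I would do via the ansatz $\ell^{-\kappa}_t=KX^x_t$ for a constant $K>0$ to be identified. Since $\sup_{t\le u\le s}\ell^{-\kappa}_u=K\sup_{t\le u\le s}X^x_u$ and $c\mapsto\pi_c(x,c)$ is $(-\alpha)$-homogeneous, the integrand in \eqref{e2} becomes $K^{-\alpha}e^{-r(s-t)}\big(X^x_s/\sup_{t\le u\le s}X^x_u\big)^\alpha$. By the independence and stationarity of the increments of $B^{-\kappa}$, conditionally on $\mathcal{F}_t$ the process $\big(X^x_{t+v}/\sup_{t\le u\le t+v}X^x_u\big)_{v\ge0}$ has the same $\P^{-\kappa}$-law as $\big(X^1_v/\sup_{0\le u\le v}X^1_u\big)_{v\ge0}$; as the integrand is dominated by $e^{-r(s-t)}$, Fubini's theorem turns \eqref{e2} into the requirement $K^{-\alpha}\int_0^\infty e^{-rv}\,\E^{-\kappa}\big[\big(X^1_v/\sup_{0\le u\le v}X^1_u\big)^\alpha\big]\,\d v=1$. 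Writing $X^1_v/\sup_{0\le u\le v}X^1_u=\inf_{0\le u\le v}(X^1_v/X^1_u)$ and applying Fubini once more identifies this with $K^{-\alpha}K_{-\kappa}^\alpha=1$, i.e.\ $K=K_{-\kappa}$ as in \eqref{K-k}; moreover $0<K_{-\kappa}^\alpha\le 1/r$ because the integrand lies in $(0,e^{-rv}]$ and is a.s.\ strictly positive, so $K_{-\kappa}\in(0,\infty)$ and the ansatz is legitimate. By the uniqueness statement invoked above, $\ell^{-\kappa}_t=K_{-\kappa}X^x_t$ is therefore the optimal base capacity.

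Finally, evaluating Definition \ref{RS} with $\delta=0$, initial capacity $c\ge0$ and $\ell^{-\kappa}_t=KX^x_t$ yields $\zeta^{\ell^{-\kappa},0}_t=\sup_{0\le s\le t}(KX^x_s-c)\vee0$, whence the base capacity policy is $I^{\star,-\kappa}_t=\sup_{0\le s\le t}(KX^x_s-c)\vee0$ and the tracked capacity is $C^{\star,-\kappa,c}_t=c\vee\sup_{0\le s\le t}(KX^x_s)$, which are precisely \eqref{I-k} and \eqref{C-k}. The one genuine verification left is admissibility, $\E^{-\kappa}\big[\int_0^\infty e^{-rt}\,\d I^{\star,-\kappa}_t\big]<\infty$: integrating by parts this is at most $rK\int_0^\infty e^{-rt}\,\E^{-\kappa}\big[\sup_{0\le s\le t}X^x_s\big]\,\d t$, and a routine Doob/$L^p$-maximal-inequality estimate shows that $\E^{-\kappa}\big[\sup_{0\le s\le t}X^x_s\big]$ grows at an exponential rate strictly below $r$ thanks to Assumption \ref{ass:case}-(v) — which is exactly the content of the hypothesis under which Proposition 7.1 in \cite{RS} is stated. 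I expect this integrability bookkeeping (together with the careful transcription of the $T=+\infty$ hypotheses from \cite{RS}) to be the main, though entirely routine, obstacle; the explicit identification of $K$ via the ansatz is short.
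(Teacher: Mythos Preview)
Your proposal is correct and follows essentially the same approach as the paper: both reduce \eqref{eq:OCkappa} to the single-prior setting of \cite{RS} and invoke Proposition~7.1 therein (the geometric Brownian motion being the no-jump instance of the exponential L\'evy framework). You spell out in more detail than the paper does the verification that the ansatz $\ell^{-\kappa}_t=KX^x_t$ solves the backward equation and the evaluation of Definition~\ref{RS} to obtain \eqref{I-k}--\eqref{C-k}, but this is exactly the computation underlying the cited result.
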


Notice that the irreversible investment plan $I^{\star,-\kappa}$ is such that
\begin{equation}
\label{Sk1}
C^{\star,-\kappa,c}_t \geq K X^x_t, \quad  \forall t \geq 0 \quad \P^{-\kappa}-a.s.
\end{equation}
Moreover, it is a standard result that
\begin{equation}
\label{Sk2}
I^{\star,-\kappa}_t = \int_0^t \mathds{1}_{\{C^{\star,-\kappa,c}_s \leq K X^x_s\}} \d I^{\star,-\kappa}_s, \quad t \geq 0 \quad \P^{-\kappa}-a.s.;
\end{equation}
that is, if $(C^{\star,-\kappa,c}_t, X^x_t)$ belong to $\{(x',c')\in\R\times\R_+:\, c' \leq K x'\}$, then such a time $t$ is a time of increase for $I^{\star,-\kappa}$.

It is also easy to see from \eqref{K-k} that (cf.\ \cite{handbook})
\begin{equation}
\label{K-k-bis}
K^\alpha = \frac{1}{r}\E^{-\kappa}\bigg[\int_0^{\infty} re^{-rt} e^{\alpha \underline{Z}_t} \d t\bigg] = \frac{1}{r}\E^{-\kappa}\Big[e^{\alpha \underline{Z}_{\tau_r}}\Big] = \frac{1}{r} \frac{\beta_{-}}{\beta_{-} - \alpha\sigma^2},
\end{equation}
where $\tau_r$ is an independent random time, exponentially distributed with parameter $r$,
$$\beta_{-}:=- (b - \sigma \kappa-\frac{1}{2}{\sigma^2}) - \sqrt{\big(b - \sigma \kappa-\frac{1}{2}\sigma^2\big)^2 + 2r\sigma^2},$$
and $\underline{Z}_t = \inf_{0\leq s \leq t}\big((b - \sigma \kappa - \frac{1}{2}\sigma^2) t + \sigma B^{-\kappa}_t\big)$.

\begin{remark}
\label{rem:prop}
\begin{enumerate}
\item By Lemma \ref{ii3} we know that the optimal investment plan $I^{\star,-\kappa}$ should be such that the corresponding capacity $C^{\star,-\kappa,c}$ is not larger than $\widehat{C}$, where $\widehat{C}_t= c + \sup_{0 \leq s \leq t}c^{\star}_s$, with $c^{\star}_s$ satisfying
\begin{displaymath}
\pi_c(X^x_s,c^{\star}_s)=r, \quad s\geq0.
\end{displaymath}
It is easy to see that in our setting $c^{\star}_s=\frac{X^x_s}{r^{1/\alpha}}$. Since $K^\alpha \leq \frac{1}{r}$, it readily follows that $C^{\star,-\kappa,c} \leq \widehat{C}$ as desired.

\item It is easy to check that $K$ from \eqref{K-k-bis} is decreasing with respect to the interest rate $r$. Therefore, such is the optimal investment plan under $\P^{-\kappa}$ as well.
\end{enumerate}
\end{remark}

The following lemma shows that the investment plan $I^{\star,-\kappa}$ induced by the base capacity $\ell^{-\kappa}$ is admissible for the initial problem \eqref{e1}; i.e., it belongs to $\mathcal{A}_{\kappa}^{\infty}$.

\begin{lemma}
\label{admissible}
Under Assumption \ref{ass:case}, we have
\begin{displaymath}
\sup_{\xi\in \Xi^\kappa}\E^\xi\left[\int_0^\infty  e^{-rt}\d I^{\star,-\kappa}_t\right]<\infty.
\end{displaymath}
\end{lemma}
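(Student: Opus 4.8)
The plan is to control the "expensive" expectation $\sup_{\xi\in\Xi^\kappa}\E^\xi[\int_0^\infty e^{-rt}\,\d I^{\star,-\kappa}_t]$ by a single, more tractable quantity. First I would use the integration-by-parts identity \eqref{i8} (specialized to $\delta=0$, $T=\infty$), which gives $\int_0^\infty e^{-rt}\,\d I^{\star,-\kappa}_t = r\int_0^\infty e^{-rt} C^{\star,-\kappa,c}_t\,\d t - c$, provided the boundary term $e^{-rt}C^{\star,-\kappa,c}_t\to 0$; this latter decay follows from the explicit form \eqref{C-k}, namely $C^{\star,-\kappa,c}_t = c\vee\sup_{0\le s\le t}(KX^x_s)$, together with Assumption \ref{ass:case}-(v), which forces the relevant exponential moments of the running supremum of $X^x$ under every $\P^\xi$ to decay faster than $e^{rt}$. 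Hence it suffices to bound
\[
\sup_{\xi\in\Xi^\kappa}\E^\xi\bigg[r\int_0^\infty e^{-rt} C^{\star,-\kappa,c}_t\,\d t\bigg]
\le rc \int_0^\infty e^{-rt}\,\d t + rK\sup_{\xi\in\Xi^\kappa}\E^\xi\bigg[\int_0^\infty e^{-rt}\sup_{0\le s\le t}X^x_s\,\d t\bigg],
\]
the first term being simply $c$.

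The core estimate is therefore a uniform (in $\xi\in\Xi^\kappa$) bound on $\E^\xi[\sup_{0\le s\le t}X^x_s]$, or more conveniently on $\E^\xi[\sup_{0\le s\le t}e^{\sigma B_s}]$ after factoring out the deterministic part of $X^x$. By Girsanov's theorem, under $\P^\xi$ the process $B_s - \int_0^s\xi_u\,\d u$ is a Brownian motion; since $|\xi|\le\kappa$, one has $B_s \le \WW_s + \kappa s$ where $\WW$ is a $\P^\xi$-Brownian motion, so $\sup_{0\le s\le t}X^x_s \le x\exp((b-\tfrac12\sigma^2)t)\sup_{0\le s\le t}\exp((\sigma\kappa) s + \sigma\WW_s)$ (being careful with the sign of $b-\tfrac12\sigma^2$; if it is negative one simply drops it or bounds it by $1$ for the supremum up to time $t$, re-inserting the worst case). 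Then the reflection principle / the known law of the maximum of a Brownian motion with drift gives $\E^\xi[\sup_{0\le s\le t}\exp(\mu s + \sigma\WW_s)] \le C_1 e^{\lambda t}$ for an explicit $\lambda = \lambda(\sigma,\kappa,b)$ and a constant $C_1$ independent of $\xi$; the exponent $\lambda$ is exactly of the order $b+\sigma\kappa+\tfrac12(\sigma+\kappa)^2$ once all contributions are collected. Plugging this back, $\int_0^\infty e^{-rt} e^{\lambda t}\,\d t < \infty$ precisely because of Assumption \ref{ass:case}-(v), which was tailored so that $r > \lambda$. Taking the supremum over $\xi$ of a bound that is already $\xi$-free completes the argument.

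The main obstacle is getting the drift/exponent bookkeeping right: one must track how the Girsanov shift of size up to $\kappa$ interacts with the intrinsic volatility $\sigma$ inside a running supremum (not just at a fixed time), so that the resulting exponential growth rate matches $b+\sigma\kappa+\tfrac12(\sigma+\kappa)^2$ and no better rate is needed. A clean way to avoid case distinctions on the sign of $b-\tfrac12\sigma^2$ is to bound $\sup_{0\le s\le t}X^x_s \le x\,\sup_{0\le s\le t}\exp\big((b-\tfrac12\sigma^2)^+ s\big)\exp(\sigma B_s)$ and then estimate $\sup_{0\le s\le t}\exp(\sigma B_s)$ under $\P^\xi$; alternatively, one integrates in $t$ first (Tonelli) and uses $\int_0^\infty e^{-rt}\mathds{1}_{\{t\ge s\}}\,\d t = e^{-rs}/r$ to convert the time integral of a running supremum into an expectation over the single exponential time $\tau_r$, as already done in \eqref{K-k-bis}. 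With either route the remaining step is a routine exponential-moment computation for geometric Brownian motion under the equivalent measures $\P^\xi$, uniformly in $\xi$, and condition (v) delivers finiteness.
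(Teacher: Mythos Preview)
Your plan is correct and follows essentially the same route as the paper: reduce via integration by parts to bounding $\sup_\xi \E^\xi\big[\int_0^\infty e^{-rt}\sup_{s\le t}X^x_s\,\d t\big]$, use Girsanov to replace $B$ by a $\P^\xi$-Brownian motion shifted by at most $\kappa$, and evaluate the resulting integral via the law of the running maximum of a drifted Brownian motion at an independent exponential time $\tau_r$, finiteness coming from $\beta_+>1$ under Assumption~\ref{ass:case}-(v). The paper's only cosmetic difference is that it carries the density $\epsilon^\xi_t$ through the integration by parts under $\P_0$ (producing an extra stochastic-integral term of zero mean) rather than integrating by parts pathwise and then taking $\E^\xi$ as you do; note also that the sharp threshold emerging from the exponential-time computation is merely $r>b+\sigma\kappa$, strictly weaker than (v), so your worry about making the growth rate match $b+\sigma\kappa+\tfrac12(\sigma+\kappa)^2$ exactly is unnecessary.
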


\begin{proof}

Since for any $\xi\in\Xi^\kappa$
\begin{displaymath}
\E^\xi\left[\int_0^\infty  e^{-rt}\d I^{\star,-\kappa}_t\right]=\E\left[\int_0^\infty e^{-rt}\epsilon^\xi_t\d I^{\star,-\kappa}_t\right],
\end{displaymath}
an integration by parts gives
	\begin{align}
	\label{admiss0}
	\int_0^T e^{-rt}\epsilon^\xi_t\d I_t^{\star,-\kappa}&=\int_0^T e^{-rt}  \epsilon^\xi_t \d C_t^{\star,-\kappa,c} \nonumber \\
	&= e^{-rT} \epsilon_T^\xi C_T^{\star,-\kappa,c}-c-\int_0^T C^{\star,-\kappa,c}_t \d (e^{-rt}\epsilon^\xi_t) \\
	&= e^{-rT} \epsilon_T^\xi C_T^{\star,-\kappa,c}-c +\int_0^T r e^{-rt} \epsilon^\xi_t C_t^{\star,-\kappa,c}\d t+\int_0^T  e^{-rt} \xi_t \epsilon^\xi_t C_t^{\star,-\kappa,c}\d B_t, \nonumber
	\end{align}
	for any $T>0$ given and fixed.
	Let $\tilde{C}^{-\kappa}_t:=\sup_{s\in[0,t]}\ell^{-\kappa}_s =\sup_{s\in[0,t]}(KX^x_s)$. It is easy to check that $C^{\star,-\kappa,c}_t\leq c+ \tilde{C}^{-\kappa}_t$. Hence,
	\begin{align}
	\label{admiss1}
	& \E\left[\int_0^\infty r e^{-rt} \epsilon^\xi_t C_t^{\star,-\kappa,c}\d t\right] \leq c + \E\left[\int_0^\infty r e^{-rt} \epsilon^\xi_t \tilde{C}_t^{-\kappa}\d t\right]= c + K x \E\left[\int_0^\infty  r e^{-rt+\bar{Z}^\xi_t}\d t\right],
	\end{align}
	where $\bar{Z}^{\xi}_t=\sup_{0\leq s\leq t}Z^\xi_s$ and $Z^\xi_t=\sigma B_t+(b-\frac{1}{2}\sigma^2)t+\sigma\int_0^t \xi_sds$. Since, for any $\xi\in \Xi^\kappa$,
 \begin{displaymath}
 \E\left[\int_0^\infty  r e^{-rt+\bar{Z}^\xi_t}\d t\right]\leq \E\left[\int_0^\infty  r e^{-rt+\bar{Z}^\kappa_t}\d t\right],
 \end{displaymath}
we can continue from \eqref{admiss1} and write
\begin{equation}
	\label{admiss2}
\E\left[\int_0^\infty r e^{-rt} \epsilon^\xi_t C_t^{\star,-\kappa,c}\d t\right] \leq c + K x \E\left[\int_0^\infty r e^{-rt+\bar{Z}^\kappa_t}\d t\right].
\end{equation}

Let now $\tau_r$ be an independent exponentially distributed random time with parameter $r$ and $\beta_+:=\beta_{+}(\kappa)$ be the largest solution to the equation $\frac{1}{2}\sigma^2 \beta^2+(b-\frac{1}{2}\sigma^2+\sigma\kappa)\beta-r=0$. Notice that, by Assumption \ref{ass:case}, $\beta_{+}>1$. Then, by Equation (1.1.1) in \cite{handbook}, we can write
	\begin{displaymath}
		\E\left[\int_0^\infty re^{-rt+\bar{Z}^\kappa_t}\d t\right]=\E\left[\exp(\bar{Z}^\kappa_{\tau_r})\right]= \frac{\beta_{+}}{\beta_{+}-1}.
	\end{displaymath}
The latter, together with \eqref{admiss2}, imply
 \begin{equation}
\label{admiss3}
 \E\left[\int_0^\infty r e^{-rt} \epsilon^\xi_t {C}_t^{\star,-\kappa,c}\d t\right] \leq c + K x \frac{\beta_{+}}{\beta_{+}-1}.
 \end{equation}

 By arguments similar to those employed in the proof of Lemma 4.9 in \cite{BR}, we also obtain that
	\begin{equation}
	\label{e4}
		\lim_{T\rightarrow\infty}e^{-rT} \epsilon^\xi_T C_T^{\star,-\kappa,c}=0, \quad \P_0-\text{a.s.},
	\end{equation}
	which together with \eqref{admiss0} and \eqref{admiss3} give
	\begin{align*}
	\E\left[\int_0^\infty \epsilon^\xi_t e^{-rt}\d I_t^{\star,-\kappa}\right]=\E\left[\int_0^\infty r \epsilon^\xi_t e^{-rt}C_t^{\star,-\kappa,c}\d t\right]-c
	\leq    K x \frac{\beta_{+}}{\beta_{+}-1}.
	\end{align*}
	Therefore, $I^{\star,-\kappa}$ belongs to $\mathcal{A}_{\kappa}^{\infty}$.
	\end{proof}

We now move on by determining the expression for $v(x,c)$. For this we slightly generalize the results of Proposition 7.2 in \cite{RS} to the case in which $c$ is not necessarily null as assumed therein. The detailed proof can be found in Appendix \ref{AppendixB}.
\begin{proposition}
\label{prop:v}
The value function of problem \eqref{eq:OCkappa} is such that $v\in C^{2,1}(\R_+ \times [0,\infty);\R)$ and it is given by
\begin{equation}
\label{eq:vcase}
v(x,c)=
\begin{cases}
c - K x + v(x, K x), & c \leq K x \\
\frac{x^\alpha c^{1-\alpha}}{(1-\alpha) \tilde{r} }
+ \frac{1}{\lambda-1}
 x^\lambda   c^{1-\lambda} K^\mu
 \left( \frac{1}{\tilde{r}} - K^\alpha\right)& c > K x,
\end{cases}
\end{equation}
where $\lambda$ is the largest solution to $\phi(\lambda)=r$, with $\phi(\lambda)=\frac{1}{2}\sigma^2\lambda^2+(b-\frac{1}{2}\sigma^2-\sigma \kappa)\lambda$, $\mu=\lambda-\alpha$, and $\tilde{r}=r-\phi(\alpha)$.
\end{proposition}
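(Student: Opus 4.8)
The plan is to follow the strategy of Proposition 7.2 in \cite{RS}, but carefully tracking the dependence on the initial capacity $c$, which there was set to zero. First I would recall the structure of the optimal policy from Theorem \ref{thm:t1}: under $\P^{-\kappa}$ the optimal capacity is $C^{\star,-\kappa,c}_t = c \vee \sup_{0\leq s\leq t}(KX^x_s)$, so the firm never invests as long as $c > KX^x_t$ (the waiting region) and tracks the boundary $\{c = Kx\}$ otherwise (the action region). This dichotomy is exactly what produces the two branches of \eqref{eq:vcase}.

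In the \emph{action region} $c \leq Kx$, since $X^x$ is a geometric Brownian motion, scaling gives the homogeneity relation $v(x,c) = c - Kx + v(x,Kx)$: the firm immediately invests the lump sum $Kx - c$ (at unit cost), after which it is on the boundary and the continuation value is $v(x,Kx)$. This needs a short argument: the optimal plan $I^{\star,-\kappa}$ from \eqref{I-k} makes an initial jump of size $(Kx-c)^+$, and then the residual problem started from $(x,Kx)$ has value $v(x,Kx)$ by the Markov property. In the \emph{waiting region} $c > Kx$, no investment occurs until $X^x$ first hits the level $c/K$, so
\begin{displaymath}
v(x,c) = \E^{-\kappa}\bigg[\int_0^{\tau} e^{-rt}\pi(X^x_t,c)\,\d t\bigg] + \E^{-\kappa}\big[e^{-r\tau}\big]\, v(X^x_\tau, c),
\end{displaymath}
where $\tau := \inf\{t\geq 0:\, X^x_t = c/K\}$ and $X^x_\tau = c/K$. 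The first expectation is computed explicitly using the fact that, under $\P^{-\kappa}$, $e^{-rt}(X^x_t)^\alpha$ has a known expectation and stopping at $\tau$ yields a geometric term; this produces the term $\frac{x^\alpha c^{1-\alpha}}{(1-\alpha)\tilde r}$ with $\tilde r = r - \phi(\alpha)$, after checking $\tilde r > 0$ via Assumption \ref{ass:case}-(v). The Laplace transform $\E^{-\kappa}[e^{-r\tau}] = (x K / c)^{\lambda}$ follows from standard first-passage formulae for geometric Brownian motion, where $\lambda$ is the positive root of $\phi(\lambda) = r$; combined with $v(c/K, c) = v(c/K, K\cdot c/K)$ on the boundary and the action-region formula, and using the value $v(\,\cdot\,,K\,\cdot\,)$ that one pins down from $K^\alpha = \tfrac1r\frac{\beta_-}{\beta_- - \alpha\sigma^2}$ in \eqref{K-k-bis}, one recovers the stated $x^\lambda c^{1-\lambda} K^\mu(\tfrac1{\tilde r} - K^\alpha)/(\lambda - 1)$ with $\mu = \lambda - \alpha$.

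Finally, the $C^{2,1}$-regularity: the two branches manifestly give a $C^{2,1}$ function on each open region, so the only issue is smooth fit across $\{c = Kx\}$. I would verify $C^1$-fit (value and first derivatives match) by direct computation using the explicit formulae — this is the standard "smooth pasting" check — and note that $C^{2,1}$ across the free boundary then follows because the waiting-region expression solves, by construction, the linear PDE $\tfrac12\sigma^2 x^2 v_{xx} + (b-\sigma\kappa)x v_x - r v + \pi(x,c) = 0$, which forces the second $x$-derivative to match as well. The main obstacle I anticipate is purely bookkeeping: carrying the nonzero $c$ through the scaling relations and making sure the constant multiplying $x^\lambda c^{1-\lambda}$ comes out exactly as claimed, in particular correctly using the identity \eqref{K-k-bis} for $K^\alpha$ to simplify $v(x,Kx)$ into the combination $\frac{1}{\lambda-1}K^\mu(\tfrac1{\tilde r} - K^\alpha)$. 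Everything else is a routine transcription of the arguments in \cite{RS} to the present $\kappa$-shifted drift.
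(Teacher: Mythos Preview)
Your proposal is correct but takes a genuinely different route from the paper. The paper does not use a first-passage-time decomposition at all. Instead, it writes the optimal capacity as $C^{\star,-\kappa,c}_t = \max\{c, xK\exp(Y^*_t)\}$ with $Y^*$ the running maximum of the log-shock, performs an Esscher change of measure $\P^{-\kappa}\to\Q$ via the martingale $M^\alpha_t=\exp(\alpha Y_t-\phi(\alpha)t)$ to absorb the factor $(X^x_t)^\alpha$, and then represents both the gross profit and the cost as expectations of the optimal capacity evaluated at independent exponential times ($\tilde\tau\sim\mathrm{Exp}(\tilde r)$ under $\Q$ for the profit, $\tau_r\sim\mathrm{Exp}(r)$ under $\P^{-\kappa}$ for the cost). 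The key probabilistic input is that $Y^*$ stopped at an independent exponential time is itself exponentially distributed (with parameters $\mu=\lambda-\alpha$ under $\Q$ and $\lambda$ under $\P^{-\kappa}$); a one-line computation of $\E[\max\{L,e^{aZ}\}]$ for exponential $Z$ then yields both terms of \eqref{eq:vcase} simultaneously, and the regularity is checked by direct differentiation using the identity \eqref{K-k-bis}.

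Your approach---split at the hitting time $\tau$ of the level $c/K$, compute the ``do-nothing'' profit on $[0,\tau]$ and the Laplace transform $\E^{-\kappa}[e^{-r\tau}]=(xK/c)^\lambda$, then paste with the boundary value $v(c/K,c)$---is the classical dynamic-programming / free-boundary route and is perfectly sound. It has the advantage of making the smooth-fit structure transparent and of requiring no measure change. The paper's Wiener--Hopf/exponential-time method is more compact because it handles the running maximum in one stroke and never needs to isolate $v(x,Kx)$ as a separate unknown; your method, by contrast, has to pin down $v(c/K,c)$ before the pasting can be completed, which is the bookkeeping you flagged. Either way the constants match, and the $C^{2,1}$ claim is in both cases a direct verification.
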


\begin{remark}
\label{rem:onv}
\begin{enumerate}
\item Since $I^{\star,-\kappa}$ is optimal for problem \eqref{eq:OCkappa}, one can write
\begin{displaymath}
v(x,c)=\E^{-\kappa}\bigg[\int_0^\infty e^{-rt} \pi(X^x_t,C^{\star,-\kappa,c}_t) \d t - \int_0^\infty e^{-rt} \d I^{\star,-\kappa}_t\bigg].
\end{displaymath}
Then, recalling equations \eqref{I-k} and \eqref{C-k}, it is easy to check that for any positive constant $a$ we have $v(ax,ac)=av(x,c)$, where the homogeneity of the Cobb-Douglas profit function has also been employed.

\item Because the economic shock $X^x$ is linear with respect to $x$ (being a geometric Brownian motion), and $x \mapsto \pi(x,c)$ is increasing, it is easy to see from \eqref{eq:OCkappa} that $v_x(x,c) \geq 0$ for any $(x,c) \in \R_+ \times [0,\infty)$. This observation will be of fundamental importance in the proof of Theorem \ref{thm:maincase} below.
\end{enumerate}
\end{remark}


\subsection{Verifying that $\P^{-\kappa}$ is the worst-case scenario}
\label{sec:verifying}

For any $\xi\in\Xi^\kappa$ given and fixed, define for any $t\geq0$,
\begin{align}
\label{Hxi}
& H^{\star,\xi}_t:=\E^{\xi}_t\bigg[\int_t^{\infty} e^{-rs} \pi(X^x_s, C^{\star,-\kappa,c}_s) \d s - \int_t^{\infty} e^{-rs} \d I^{\star,-\kappa}_s\bigg] \nonumber \\
& = \frac{1}{\epsilon^{\xi}_t}\E_t\bigg[\int_t^{\infty} e^{-rs} \epsilon^{\xi}_s \pi(X^x_s, C^{\star,-\kappa,c}_s) \d s - \int_t^{\infty} e^{-rs} \epsilon^{\xi}_s \d I^{\star,-\kappa}_s\bigg]
\end{align}
where $I^{\star,-\kappa}$ and $C^{\star,-\kappa,c}$ are as in \eqref{I-k} and \eqref{C-k}.  We shall prove in Theorem \ref{thm:maincase} below that, for any $\xi\in \Xi^\kappa$, one $H^{\star,\xi}_0\geq H^{\star,-\kappa}_0$. The next technical result will be needed in order to accomplish that.

\begin{lemma}\label{square-integrable}
Under Assumption \ref{ass:case}, the random variable
$$\int_0^{\infty} e^{-rs} \epsilon^{\xi}_s\pi(X^x_s, C^{\star,-\kappa,c}_s) \d s - \int_0^{\infty} e^{-rs} \epsilon^{\xi}_s \d I^{\star,-\kappa}_s$$ is square-integrable under $\P_0$, for any $\xi\in \Xi^\kappa$.
\end{lemma}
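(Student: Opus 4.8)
The plan is to write
$$R^{\xi}:=\int_0^\infty e^{-rs}\epsilon^\xi_s\,\pi(X^x_s,C^{\star,-\kappa,c}_s)\,\d s-\int_0^\infty e^{-rs}\epsilon^\xi_s\,\d I^{\star,-\kappa}_s$$
and to observe that, since $\pi\ge 0$, $\epsilon^\xi\ge 0$ and $I^{\star,-\kappa}$ is nondecreasing, both summands of $R^{\xi}$ are nonnegative; hence $|R^{\xi}|$ is dominated by their sum, and it suffices to show that each of them lies in $L^2(\P_0)$. I would first record two pathwise bounds. Setting $\overline X^x_t:=\sup_{0\le s\le t}X^x_s$ and using $\overline X^x_t\ge X^x_0=x$, the capacity \eqref{C-k} satisfies $C^{\star,-\kappa,c}_t=c\vee(K\overline X^x_t)\le\kappa_1\overline X^x_t$ with $\kappa_1:=\big(c\vee(Kx)\big)/x$ --- in particular, \emph{without} any additive constant --- while the Cobb--Douglas form and $X^x_t\le\overline X^x_t$ give $\pi(X^x_t,C^{\star,-\kappa,c}_t)=\tfrac{1}{1-\alpha}(X^x_t)^\alpha(C^{\star,-\kappa,c}_t)^{1-\alpha}\le\kappa_2\,\overline X^x_t$ for a suitable $\kappa_2>0$. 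Thus the whole matter reduces to controlling $L^2(\P_0)$-norms of (ordinary and stochastic) integrals of $e^{-rs}\epsilon^\xi_s\overline X^x_s$.

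The crucial estimate is then a uniform-in-$\xi$ moment bound: with $\gamma:=2b+\sigma^2+4\sigma\kappa+\kappa^2$ there is a constant $C>0$ such that, for all $\xi\in\Xi^\kappa$ and $s\ge 0$,
$$\E\big[(\epsilon^\xi_s)^2(\overline X^x_s)^2\big]\le C\,(1+s)\,e^{\gamma s}.$$
To prove this I would decompose $(\epsilon^\xi_s)^2=\exp\!\big(2\!\int_0^s\xi_u\,\d B_u-2\!\int_0^s\xi_u^2\,\d u\big)\exp\!\big(\!\int_0^s\xi_u^2\,\d u\big)$: the first factor is the density on $\mathcal F_s$ of the measure $\P^{2\xi}$ (a genuine $\P_0$-martingale since $|\xi|\le\kappa$), and the second is at most $e^{\kappa^2 s}$, so $\E[(\epsilon^\xi_s)^2(\overline X^x_s)^2]\le e^{\kappa^2 s}\,\E^{\P^{2\xi}}[(\overline X^x_s)^2]$. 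Under $\P^{2\xi}$ Girsanov's theorem shifts the exponent of $X^x$ by $2\sigma\!\int_0^u\xi_v\,\d v\le 2\sigma\kappa u$, so $\overline X^x_s\le x\sup_{0\le u\le s}\exp\!\big(\sigma\widetilde B_u+(2\sigma\kappa+b-\tfrac12\sigma^2)u\big)$ with $\widetilde B$ a $\P^{2\xi}$-Brownian motion; writing the (squared) exponent as $\big(2\sigma\widetilde B_u-2\sigma^2 u\big)+\big(2b+\sigma^2+4\sigma\kappa\big)u$, bounding $\E[\sup_{0\le u\le s}\exp(2\sigma\widetilde B_u-2\sigma^2 u)]$ by a quantity growing at most linearly in $s$ (e.g.\ via Doob's $L\log L$ maximal inequality, or the explicit law of the running maximum of a drifted Brownian motion in \cite{handbook}), and replacing $u$ by $s$ in the remaining drift term, yields the claim. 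By Assumption \ref{ass:case}(v) one has $2r>\gamma$ (the correction $\tfrac12(\sigma+\kappa)^2$ in (v) being exactly what makes this work), so $(1+s)e^{\gamma s}$ is integrable against both $e^{-rs}$ and $e^{-2rs}$.

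With the key estimate in hand I would finish as follows. For the profit integral, Minkowski's integral inequality gives $\| \int_0^\infty e^{-rs}\epsilon^\xi_s\pi(X^x_s,C^{\star,-\kappa,c}_s)\,\d s \|_{L^2(\P_0)}\le\kappa_2\int_0^\infty e^{-rs}\E[(\epsilon^\xi_s)^2(\overline X^x_s)^2]^{1/2}\,\d s<\infty$. For the investment integral, I would pass to the limit $T\to\infty$ in the integration-by-parts identity \eqref{admiss0}: the boundary term $e^{-rT}\epsilon^\xi_T C^{\star,-\kappa,c}_T$ vanishes $\P_0$-a.s.\ by \eqref{e4}, while the stochastic integral converges in $L^2(\P_0)$ because, by the key estimate, $\E\big[\int_0^\infty e^{-2rs}(\epsilon^\xi_s)^2(C^{\star,-\kappa,c}_s)^2\,\d s\big]<\infty$; this produces
$$\int_0^\infty e^{-rs}\epsilon^\xi_s\,\d I^{\star,-\kappa}_s=-c+r\int_0^\infty e^{-rs}\epsilon^\xi_s C^{\star,-\kappa,c}_s\,\d s-\int_0^\infty e^{-rs}\xi_s\epsilon^\xi_s C^{\star,-\kappa,c}_s\,\d B_s .$$
The Lebesgue term is bounded in $L^2(\P_0)$ exactly as the profit integral (now with $rC^{\star,-\kappa,c}_s\le r\kappa_1\overline X^x_s$), and the stochastic term by It\^o's isometry, $\E\big[\big(\int_0^\infty e^{-rs}\xi_s\epsilon^\xi_s C^{\star,-\kappa,c}_s\,\d B_s\big)^2\big]\le\kappa^2\kappa_1^2\int_0^\infty e^{-2rs}\E[(\epsilon^\xi_s)^2(\overline X^x_s)^2]\,\d s<\infty$. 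Consequently both summands of $R^{\xi}$ belong to $L^2(\P_0)$, and so does $R^{\xi}$.

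The part I expect to be the real obstacle is obtaining the \emph{sharp} exponential rate $\gamma$ in the key estimate, uniformly in $\xi$. The running maximum $\overline X^x$ and the density $\epsilon^\xi$ genuinely interact, and a crude Cauchy--Schwarz split of the product $\epsilon^\xi_s\overline X^x_s$ produces a rate strictly larger than $\gamma$, which would \emph{not} be dominated by $2r$. The worst case is the constant kernel $\xi\equiv\kappa$, under which $\epsilon^\xi$ and the geometric Brownian motion $X^x$ become perfectly correlated; one sees then that the rate is exactly $\gamma$, and that the correction $\tfrac12(\sigma+\kappa)^2$ in Assumption \ref{ass:case}(v) is precisely tailored to guarantee $2r>\gamma$. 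The only way I see to capture this sharp rate is to ``undo'' the density first via the change of measure to $\P^{2\xi}$ and only afterwards deal with the running maximum, whose $L^1$-norm fortunately grows merely polynomially in time; the bound $c\vee(K\overline X^x_t)\le\kappa_1\overline X^x_t$ from the first step is also essential, as it eliminates an additive constant that would otherwise have to be estimated separately.
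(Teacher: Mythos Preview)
Your approach is essentially the paper's: both pivot on the moment bound $\E\big[(\epsilon^\xi_s)^2(\overline X^x_s)^2\big]\lesssim e^{\gamma s}$ (with $\gamma=2b+\sigma^2+4\sigma\kappa+\kappa^2$) obtained by passing to $\P^{2\xi}$ via Girsanov, convert the $\d I^{\star,-\kappa}$-integral through the integration-by-parts identity \eqref{admiss0}--\eqref{e4} into a Lebesgue term and a stochastic integral, control the latter by It\^o's isometry, and invoke Assumption~\ref{ass:case}(v) in the equivalent form $2r>\gamma$. The remaining differences are cosmetic: you bound the two nonnegative summands of $R^\xi$ separately (using $\pi\le\kappa_2\overline X^x$ and the multiplicative estimate $C^{\star,-\kappa,c}\le\kappa_1\overline X^x$) and apply Minkowski's integral inequality together with an explicit $(1+s)$ factor from Doob's $L\log L$, whereas the paper first sandwiches the combined expression between $-\int r e^{-rs}\epsilon^\xi_s C^{\star,-\kappa,c}_s\,\d s$ and $\int e^{-rs}\epsilon^\xi_s\pi^\star(X^x_s,r,0)\,\d s$ (plus the common stochastic integral) and uses H\"older with a small $\varepsilon>0$ to absorb the subexponential correction coming from the running maximum.
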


\begin{proof}
By \eqref{admiss0} and \eqref{e4}, we have
\begin{align*}
&\int_0^{\infty} e^{-rs}\epsilon_s^\xi \pi(X^x_s, C^{\star,-\kappa,c}_s) \d s - \int_0^{\infty} e^{-rs}\epsilon_s^\xi \d I^{\star,-\kappa}_s\\
=&\int_0^{\infty} e^{-rs}\epsilon_s^\xi \pi(X^x_s, C^{\star,-\kappa,c}_s) \d s-\int_0^\infty r e^{-rs}\epsilon_s^\xi C^{\star,-\kappa,c}_s\d s-\int_0^\infty e^{-rs}\xi_s  \epsilon_s^\xi C^{\star,-\kappa,c}_s\d B_s+ c\\
\leq &\int_0^\infty e^{-rs} \epsilon_s^\xi\pi^{\star}(X^x_s,r,0) \d s-\int_0^\infty e^{-rs} \xi_s \epsilon_s^\xi C^{\star,-\kappa,c}_s\d B_s+ c\\
=& \tilde{\alpha}r^{-\frac{1}{\tilde{\alpha}}} \int_0^\infty e^{-rs}\epsilon_s^\xi X_s^x\d s-\int_0^\infty e^{-rs} \xi_s \epsilon_s^\xi C^{\star,-\kappa,c}_s\d B_s+ c
\end{align*}
and
\begin{align*}
&\int_0^{\infty} e^{-rs}\epsilon_s^\xi \pi(X^x_s, C^{\star,-\kappa,c}_s) \d s - \int_0^{\infty} e^{-rs} \epsilon_s^\xi \d I^{\star,-\kappa}_s\\
\geq &-\int_0^\infty r e^{-rs}\epsilon_s^\xi C^{\star,-\kappa,c}_s\d s-\int_0^\infty e^{-rs} \xi_s  \epsilon_s^\xi C^{\star,-\kappa,c}_s\d B_s\\
\geq &-\int_0^\infty re^{-rs}\epsilon_s^\xi\big(c+\sup_{u\in[0,s]}(KX^x_u)\big)\d s-\int_0^\infty  e^{-rs} \xi_s \epsilon_s^\xi C^{\star,-\kappa,c}_s\d B_s,
\end{align*}
where $\tilde{\alpha}:=\frac{\alpha}{\alpha-1}$.

Let $\bar{X}_s^x:=\sup_{u\in[0,s]}X_u^x$. A simple calculation implies that
\begin{displaymath}
(X_s^x)^2=x^2 \exp\big((2b-\sigma^2)s+2\sigma B_s\big).
\end{displaymath}
Also, it is easy to check that, for some $\varepsilon>0$ and $M>0$, by H\"older's inequality
\begin{align*}
\E \left[\Big(\int_0^\infty e^{-rs}\epsilon_s^\xi \bar{X}^x_s\d s\Big)^2\right]&\leq \E\left[\int_0^\infty e^{-2\varepsilon rs}\d s \int_0^\infty e^{-2(1-\varepsilon)rs}(\epsilon_s^\xi)^2(\bar{X}^x_s)^2\d s\right]\\
&\leq M \int_0^\infty e^{-2(1-\varepsilon)rs}\E\big[(\epsilon_s^\xi)^2(\bar{X}^x_s)^2\big]\d s.
\end{align*}
Since $|\xi_s|\leq \kappa$ for any $s\geq 0$, we have for any $\xi\in \Xi^\kappa$
\begin{align*}
& \E\big[(\epsilon_s^\xi)^2(\bar{X}^x_s)^2\big]=\E^{2\xi}\big[\exp(\int_0^s \xi_u^2 \d u)(\bar{X}^x_s)^2\big]\leq e^{\kappa^2 s}\E^{2\xi}\big[(\bar{X}^x_s)^2\big] \\
& \leq e^{\kappa^2 s} \E^{2\kappa}[(\bar{X}^x_s)^2] \leq x^2 e^{\kappa^2 s} e^{(\sigma^2+2(2\sigma\kappa+b))s},
\end{align*}
where Girsanov theorem has also been used. By Assumption \ref{ass:case}-(v), we may choose $\varepsilon$ small enough such that $2(1-\varepsilon)r> \sigma^2+\kappa^2+2(2\sigma\kappa+b)$, which in turn gives
$$\E\bigg[\Big(\int_0^\infty e^{-rs}\epsilon^\xi_s\bar{X}^x_s\d s\Big)^2\bigg]<\infty.$$

By It\^{o}'s isometry and similar arguments as those employed above one can also show that
\begin{displaymath}
\E\bigg[\left(\int_0^\infty \xi_s e^{-rs}\epsilon_s^\xi C^{\star,-\kappa,c}_s\d B_s\right)^2\bigg]=\E\bigg[\int_0^\infty e^{-2rs} \xi_s^2 \epsilon_s^{2\xi} (C^{\star,-\kappa,c}_s)^2\d s\bigg]<\infty.
\end{displaymath}
The proof is then complete.
\end{proof}

We can now prove the main result of this section ensuring that $\P^{-\kappa}$ realizes the worst case scenario.

\begin{theorem}
\label{thm:maincase}
Recall \eqref{Hxi}. For any $\xi\in \Xi^\kappa$, we have $H^{\star,\xi}_0 \geq H^{\star,-\kappa}_0$. Hence, \eqref{e1-bis} holds and $\P^{-\kappa}$ is the worst case scenario.
\end{theorem}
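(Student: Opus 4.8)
The plan is to run a probabilistic verification argument showing that, under the candidate worst-case measure $\P^{-\kappa}$, the process $H^{\star,-\kappa}$ is both an $\mathcal{E}^{\xi}$-supermartingale (for every $\xi\in\Xi^\kappa$) and an $\mathcal{E}^{-\kappa}$-martingale. Concretely, I would first note that by Lemma \ref{square-integrable} the terminal payoff $\int_0^{\infty} e^{-rs}\epsilon^{\xi}_s\pi(X^x_s,C^{\star,-\kappa,c}_s)\,\d s - \int_0^{\infty} e^{-rs}\epsilon^{\xi}_s\,\d I^{\star,-\kappa}_s$ is in $L^2(\P_0)$, so all conditional expectations in \eqref{Hxi} are well-defined and the family $\{H^{\star,\xi}_t\}$ is a genuine (true) $\P^{\xi}$-martingale of the "total wealth" obtained by committing to the fixed policy $I^{\star,-\kappa}$. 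The key structural fact to extract is a backward/Bellman-type identity for $H^{\star,-\kappa}$: using the explicit forms \eqref{I-k}, \eqref{C-k} of $I^{\star,-\kappa}$ and $C^{\star,-\kappa,c}$, together with the value function $v$ from Proposition \ref{prop:v} and the homogeneity $v(ax,ac)=av(x,c)$ (Remark \ref{rem:onv}(1)), one should identify $H^{\star,-\kappa}_t = e^{-rt} v(X^x_t, C^{\star,-\kappa,c}_t)$, i.e. the value process of the $\P^{-\kappa}$-problem evaluated along its own optimal state trajectory.

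Next I would compare dynamics. Apply It\^o's formula to $e^{-rt}v(X^x_t,C^{\star,-\kappa,c}_t)$ under a generic $\P^{\xi}$, $\xi\in\Xi^\kappa$. Since $v\in C^{2,1}$, the drift term is $e^{-rt}\big[\mathcal{L}^{\xi}v(X^x_t,C^{\star,-\kappa,c}_t) - r v\big]\,\d t + e^{-rt} v_c(X^x_t,C^{\star,-\kappa,c}_t)\,\d C^{\star,-\kappa,c}_t$, where $\mathcal{L}^{\xi}$ is the generator of the GBM with drift $b+\sigma\xi_t$. On the non-investment region $\{c > KX^x\}$, the HJB equation satisfied by $v$ under $\P^{-\kappa}$ reads $\mathcal{L}^{-\kappa}v - rv + \pi(x,c) = 0$; replacing $\xi=-\kappa$ by a general $\xi$ only changes the first-order term $v_x$, and by Remark \ref{rem:onv}(2) we have $v_x\geq 0$, while $\sigma\xi_t \geq -\sigma\kappa$ means the drift of $\log X^x$ under $\P^{\xi}$ dominates that under $\P^{-\kappa}$; hence $\mathcal{L}^{\xi}v - rv + \pi \geq \mathcal{L}^{-\kappa}v - rv + \pi = 0$, so the finite-variation part of $e^{-rt}v(X^x_t,C^{\star,-\kappa,c}_t) - \int_0^t e^{-rs}\pi(X^x_s,C^{\star,-\kappa,c}_s)\,\d s + \int_0^t e^{-rs}\,\d I^{\star,-\kappa}_s$ is nondecreasing in $t$ under $\P^{\xi}$ (on the investment region one uses \eqref{Sk1}--\eqref{Sk2}: investment only occurs when $c=KX^x$, where $v_c = 1$ by smooth fit, so $v_c\,\d C = \d I$ exactly and the corresponding terms cancel). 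After localizing and taking expectations, the transversality estimate \eqref{e4} together with the $L^2$ bounds in the proof of Lemma \ref{square-integrable} let me send the localizing sequence and the horizon $T\to\infty$, obtaining $H^{\star,\xi}_0 \geq e^{-r\cdot 0}v(x,c) = v(x,c) = H^{\star,-\kappa}_0$, with equality when $\xi\equiv-\kappa$ (the HJB holds with equality). This gives $H^{\star,\xi}_0 \geq H^{\star,-\kappa}_0$ for all $\xi$, hence $\inf_{\xi\in\Xi^\kappa}H^{\star,\xi}_0 = H^{\star,-\kappa}_0$, i.e. $\P^{-\kappa}\in\Xi(I^{\star,-\kappa})$. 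Combined with Lemma \ref{admissible} (which gives $I^{\star,-\kappa}\in\mathcal{A}^\infty_\kappa$) and Theorem \ref{thm:casestudy2} (the first-order conditions \eqref{e2}, \eqref{thm:casestudy1} hold under $\P^{-\kappa}$ by construction of the base capacity $\ell^{-\kappa}=KX^x$), we conclude that $I^{\star,-\kappa}$ is optimal for \eqref{e1} and that the value equals the right-hand side of \eqref{e1-bis}, so $\P^{-\kappa}$ is the worst-case scenario.

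The main obstacle I anticipate is the analytic comparison on the investment region and the justification of passing to the limit. The delicate point is that $C^{\star,-\kappa,c}$ has a singular (reflecting-type) component, so It\^o's formula must be applied carefully: on $\{C^{\star,-\kappa,c}_t = KX^x_t\}$ the smooth-fit condition $v_c(x,Kx)=1$ must be verified from the explicit formula \eqref{eq:vcase} (the first branch gives $v(x,c)=c-Kx+v(x,Kx)$, whence $v_c=1$ trivially there), and one must check that the local-time/pushing term contributed by $\d I^{\star,-\kappa}$ is exactly compensated, leaving no residual finite-variation term of indefinite sign. The second genuinely technical nuisance is uniform integrability for the $T\to\infty$ and localization limits: here I would lean on the square-integrability bounds already established in Lemma \ref{square-integrable} (which hold uniformly over $\xi\in\Xi^\kappa$ thanks to Assumption \ref{ass:case}-(v)) and on \eqref{e4} to kill the boundary term $e^{-rT}\epsilon^\xi_T v(X^x_T,C^{\star,-\kappa,c}_T)$, noting $0\le v(X^x_T,C^{\star,-\kappa,c}_T)\le M(X^x_T + C^{\star,-\kappa,c}_T)$ by the growth of \eqref{eq:vcase}.
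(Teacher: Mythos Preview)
Your proposal is correct and rests on the same key ingredients as the paper: the identification $H^{\star,-\kappa}_t=e^{-rt}v(X^x_t,C^{\star,-\kappa,c}_t)$, the smooth-fit $v_c=1$ on the investment boundary, and the crucial sign $v_x\geq 0$ from Remark~\ref{rem:onv}(2), which combined with $\xi_t+\kappa\geq 0$ produces a drift of definite sign. The organization differs slightly. You apply It\^o directly to $e^{-rt}v$ under a generic $\P^{\xi}$ and read off a $\P^{\xi}$-submartingale, whereas the paper first derives BSDE-type dynamics $dH^{\star,\xi}_t=\widetilde{Z}^{\xi}_t\,dB^{\xi}_t-e^{-rt}(\pi\,dt-dI^{\star,-\kappa}_t)$ via the martingale representation theorem, identifies $\widetilde{Z}^{-\kappa}_t=\sigma e^{-rt}X^x_t v_x$ by matching with It\^o on $e^{-rt}v$, and then studies the difference $\widehat{H}^{\xi}=H^{\star,\xi}-H^{\star,-\kappa}$, whose drift under $\P^{\xi}$ is exactly $-(\kappa+\xi_t)\widetilde{Z}^{-\kappa}_t\leq 0$. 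The two routes are equivalent up to bookkeeping (your running-value process differs from $-\widehat{H}^{\xi}$ by the $\P^{\xi}$-martingale $H^{\star,\xi}_t+\int_0^t e^{-rs}\pi\,ds-\int_0^t e^{-rs}\,dI^{\star,-\kappa}_s$); your direct route is a touch more elementary, while the paper's BSDE framing keeps the argument closer to the rest of the paper's machinery.

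One correction: the process you display has a sign slip. The drift $\mathcal{L}^{\xi}v-rv+\pi\geq 0$ that you correctly compute is the finite-variation part (away from the boundary) of
\[
e^{-rt}v(X^x_t,C^{\star,-\kappa,c}_t)\;+\;\int_0^t e^{-rs}\pi(X^x_s,C^{\star,-\kappa,c}_s)\,\d s\;-\;\int_0^t e^{-rs}\,\d I^{\star,-\kappa}_s,
\]
not of the process with the signs on the two integrals reversed as you wrote. With the correct signs the submartingale inequality and the passage $T\to\infty$ (using the growth bound on $v$ and the estimates behind Lemma~\ref{square-integrable}, as you outline) indeed give $H^{\star,\xi}_0\geq v(x,c)=H^{\star,-\kappa}_0$, exactly as you conclude.
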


\begin{proof}
The proof is organized in four steps.
\vspace{0.25cm}

\emph{Step 1.} We start by proving that $H^{\star,-\kappa}_t=e^{-rt}v(X_t^x,C^{\star,-\kappa,c}_t)$, $t\geq 0$, with $v$ as in \eqref{eq:vcase}.

By \eqref{i8} and \eqref{e4}, we have
\begin{equation}
\label{eq:parts}
\int_t^{\infty} e^{-rs} \d I^{\star,-\kappa}_s=\int_t^\infty re^{-rs}C^{\star,-\kappa,c}_s\d s-e^{-rt}C^{\star,-\kappa,c}_t.
\end{equation}
Consequently,
\begin{align*}
v(x,c)=&\E^{-\kappa}\bigg[\int_0^{\infty} e^{-rs} \pi(X^x_s, C^{\star,-\kappa,c}_s) \d s-\int_0^\infty re^{-rs}C^{\star,-\kappa,c}_s\d s+c\bigg]\\
=&\E^{-\kappa}\bigg[\int_0^{\infty} e^{-rs} \pi(X^x_s, c\vee\sup_{t\in[0,s]}(KX^x_t)) \d s-\int_0^\infty re^{-rs}\{c\vee\sup_{t\in[0,s]}(KX^x_t)\}\d s+c\bigg].
\end{align*}
Notice that for any $u\geq 0$ and $t\geq 0$, we have $C^{\star,-\kappa,c}_{u+t}=C_t^{\star,-\kappa,c}\vee \sup_{s\in[0,u]}(KX^x_{s+t})$. By the Markov property and the homogeneity of $\pi$, we then obtain that
\begin{align*}
&\E^{-\kappa}_t\bigg[\int_t^{\infty} e^{-rs} \pi(X^x_s, C^{\star,-\kappa,c}_s) \d s\bigg] = \E^{-\kappa}_t\bigg[\int_0^{\infty} e^{-r(s+t)} \pi(X^x_{s+t}, C^{\star,-\kappa,c}_{s+t}) \d s\bigg]\\
=&e^{-rt}X_t^x\E^{-\kappa}_t\bigg[\int_0^{\infty} e^{-rs} \pi\big(\frac{X^x_{s+t}}{X_t^x}, \frac{C^{\star,-\kappa,c}_t}{X_t^x}\vee \frac{\sup_{u\in[0,s]}(KX^x_{u+t})}{X_t^x}\big) \d s \bigg]\\
=&e^{-rt}X_t^x\E^{-\kappa}_t\bigg[\int_0^{\infty} e^{-rs} \pi\big(\frac{X^x_{s+t}}{X_t^x},y\vee \frac{\sup_{u\in[0,s]}(KX^x_{u+t})}{X_t^x}\big) \d s \bigg]\bigg|_{y=\frac{C^{\star,-\kappa,c}_t}{X_t^x}}\\
=&e^{-rt}X_t^x\E^{-\kappa}\bigg[\int_0^{\infty} e^{-rs} \pi(X_s^1,y\vee \sup_{u\in[0,s]}(KX^1_{u})) \d s \bigg]\bigg|_{y=\frac{C^{\star,-\kappa,c}_t}{X_t^x}}.
\end{align*}
By using \eqref{eq:parts} and by following arguments similar to the previous ones also for the term $\E^{\xi}_t[\int_t^{\infty} e^{-rs} \d I^{\star,-\kappa}_s]$, we finally find
\begin{align*}
H^{\star,-\kappa}_t=&e^{-rt}X_t^x\E^{-\kappa}\bigg[\int_0^{\infty} e^{-rs} \pi(X_s^1,y\vee \sup_{u\in[0,s]}(KX^1_{u})) \d s\\ &-\int_0^\infty re^{-rs}\big(y\vee \sup_{u\in[0,s]}(KX^1_{u})\big)\d s+y \bigg]\bigg|_{y=\frac{C^{\star,-\kappa,c}_t}{X_t^x}}\\
=&e^{-rt}X_t^x v(1, \frac{C^{\star,-\kappa,c}_t}{X_t^x})=e^{-rt}v(X_t^x, C^{\star,-\kappa,c}_t),
\end{align*}
where the last equality is due to Remark \ref{rem:onv}-(1).
\vspace{0.25cm}

\emph{Step 2.} Let now $\xi \in \Xi^\kappa$ be given and fixed. From \eqref{Hxi}, one can clearly write
\begin{equation}
\begin{split}
\label{Hxi-bis}
H^{\star,\xi}_t = \frac{1}{\epsilon^{\xi}_t}\Big(M^{\xi}_t - \int_0^t e^{-rs} \epsilon^{\xi}_s \pi(X^x_s, C^{\star,-\kappa,c}_s) \d s + \int_0^{t} e^{-rs} \epsilon^{\xi}_s \d I^{\star,-\kappa}_s\Big),
\end{split}
\end{equation}
where we have defined the square-integrable martingale (cf.\ Lemma \ref{square-integrable})
$$M^{\xi}_t:= \E_t\bigg[\int_0^{\infty} e^{-rs} \epsilon^{\xi}_s \pi(X^x_s, C^{\star,-\kappa,c}_s) \d s - \int_0^{\infty} e^{-rs} \epsilon^{\xi}_s \d I^{\star,-\kappa}_s\bigg].$$

By Proposition 4.18 in Chapter 3.4 of \cite{KS}, there exists an $\mathbb{F}$-progressively measurable process $Z^{\xi}$ such that $\E[\int_0^{\infty} |Z^{\xi}_t|^2 \d t] < \infty$ and $M^{\xi}_t = M^{\xi}_0 + \int_0^t Z^{\xi}_s \d B_s$. Then, set $N^\xi_t:= M^{\xi}_t - \int_0^t e^{-rs} \epsilon^{\xi}_s \pi(X^x_s, C^{\star,-\kappa,c}_s) \d s + \int_0^{t} e^{-rs} \epsilon^{\xi}_s \d I^{\star,-\kappa}_s$.  Recalling that $\epsilon^\xi_t=\exp(\int_0^t \xi_s\d B_s-\frac{1}{2}\int_0^t \xi_s^2\d s)$, we find that
$$\d\big(\frac{1}{\epsilon^\xi_t}\big)=\frac{1}{\epsilon^\xi_t}\big[-\xi_t \d B_t + \xi^2_t \d t\big],$$
and applying It\^o's product rule one obtains under $\P_0$
\begin{equation}
\label{eq:dynHxi}
\d H^{\star,\xi}_t = \widetilde{Z}^{\xi}_t \big(\d B_t - \xi_t \d t\big) - e^{-rt}\big(\pi(X^x_t, C^{\star,-\kappa,c}_t) \d t - \d I^{\star,-\kappa}_t\big).
\end{equation}
Here, $\widetilde{Z}^{\xi}_t:=(Z^{\xi}_t-\xi_t N^\xi_t)/\epsilon^{\xi}_t$.
\vspace{0.25cm}

\emph{Step 3.} We now identify $\widetilde{Z}^{-\kappa}$. Let $T>0$ be given and fixed and recall that $B^{-\kappa}_t:= B_t + \kappa t$, $t\in[0,T]$, is an $\mathbb{F}$-Brownian motion under $\P^{-\kappa})$, by Girsanov theorem. Then, under $\P^{-\kappa}$ we can write from \eqref{eq:dynHxi} that
\begin{equation}
\label{eq:dynHxi-k}
H^{\star,-\kappa}_T - H^{\star,-\kappa}_0 = \int_0^T \widetilde{Z}^{-\kappa}_t \d B^{-\kappa}_t - \int_0^T e^{-rt}\big(\pi(X^x_t, C^{\star,-\kappa,c}_t) \d t - \d I^{\star,-\kappa}_t\big).
\end{equation}

On the other hand, by \emph{Step 1} above, $H^{\star,-\kappa}_t=e^{-rt}v(X^x_t, C^{\star,-\kappa,c}_t)$; in particular, $H^{\star,-\kappa}_0=v(x,c)$. Hence, with reference to Proposition \ref{prop:v}, we can apply It\^o-Meyer's formula for semimartingales to the process $\{e^{-rt}v(X^x_t, C^{\star,-\kappa,c}_t)\}_{t\geq 0}$ on the time time-interval $[0,T]$, $T>0$, and also using \eqref{Sk1} and \eqref{Sk2} one finds that $\P^{-\kappa}$ a.s.
$$H^{\star,-\kappa}_T - H^{\star,-\kappa}_0 = \int_0^T \sigma e^{-rt} X^x_t v_x(X^x_t, C^{\star,-\kappa,c}_t) \d B^{-\kappa}_t - \int_0^T e^{-rt}\big(\pi(X^x_t, C^{\star,-\kappa,c}_t) \d t - \d I^{\star,-\kappa}_t\big).$$
Comparing the latter with \eqref{eq:dynHxi-k} we find that
\begin{equation}
\label{ident-Zk}
\widetilde{Z}^{-\kappa}_t = \sigma e^{-rt} X^x_t v_x(X^x_t, C^{\star,-\kappa,c}_t), \quad \P^{-\kappa}\otimes \d t\text{-a.e.\ on}\,\,(\Omega,\mathcal{F}_T)\times ([0,T],\mathcal{B}_{[0,T]}).
\end{equation}
\vspace{0.25cm}

\emph{Step 4.} For any $\xi\in \Xi^\kappa$, set $\widehat{H}^\xi_t=H^{\star,\xi}_t-H^{\star,-\kappa}_t$, $\widehat{Z}^\xi_t=\widetilde{Z}^{\xi}_t-\widetilde{Z}^{-\kappa}_t$. It is easy to check that under $\P_0$ one has
\begin{align*}
\d \widehat{H}^\xi_{t} = \widehat{Z}^\xi_t \d B_t - \xi_s \widehat{Z}^\xi_t \d t - \big(\kappa + \xi_t\big) \widetilde{Z}^{-\kappa}_t \d t.
\end{align*}

Let now $T>0$ be given and fixed. Moving back to the measure $\P^{\xi}$ (equivalent to $\P_0$ on $\mathcal{F}_T$),  we find from the latter
\begin{align*}
\widehat{H}^\xi_{T\wedge \tau^\xi_n} - \widehat{H}^\xi_0 = \int_0^{T\wedge \tau^\xi_n} \widehat{Z}^\xi_t \d B^{\xi}_t - \int_0^{T\wedge \tau^\xi_n} \big(\kappa + \xi_t\big) \widetilde{Z}^{-\kappa}_t \d t,
\end{align*}
where $B^{\xi}_t=B_t - \int_0^t \xi_s \d s$, $t\in[0,T]$, is a Brownian motion under $\P^{\xi}$, by Girsanov theorem, and
\begin{displaymath}
\tau^\xi_n:=\inf\{t\geq 0: \int_0^t (\widehat{Z}_s^\xi)^2 ds>n\}, \quad n \in \mathbb{N}.
\end{displaymath}

Taking now expectations under $\P^{\xi}$, changing measure and using \eqref{ident-Zk} we obtain
\begin{align*}
\widehat{H}^\xi_0 & = \E^{\xi}[\widehat{H}^\xi_{T\wedge \tau^\xi_n}] + \E^{\xi}\bigg[\int_0^{T\wedge \tau^\xi_n} \big(\kappa + \xi_t\big) \widetilde{Z}^{-\kappa}_t \d t\bigg] \\
& = \E^{\xi}[\widehat{H}^\xi_{T\wedge \tau^\xi_n}] + \E^{-\kappa}\bigg[\int_0^{T\wedge \tau^\xi_n} \epsilon^{\kappa+\xi}_t \big(\kappa + \xi_t\big) \widetilde{Z}^{-\kappa}_t \d t\bigg] \geq \E^{\xi}[\widehat{H}^\xi_{T\wedge \tau^\xi_n}].
\end{align*}
Thanks to Lemma \ref{square-integrable}, we now first let $n$ go to infinity; then, by taking limits as $T \uparrow \infty$ as well and by using \eqref{Hxi} we finally find $\widehat{H}^\xi_0 \geq 0$.
\end{proof}


\subsubsection{Properties of the Optimal Solution}
\label{sec:propcase}

By Theorem \ref{thm:t1}, the optimal base capacity is determined by the economic shock $X$, the interest rate $r$ and the parameter $\kappa$. In fact, $\kappa$ reflects the ambiguity faced by the agent. Roughly speaking, the larger $\kappa$, the more ambiguity. In this subsection, we study how the optimal base capacity depends on these parameters.

\begin{lemma}
\label{lem:l2}
For any constant $-\kappa \leq \xi \leq \kappa$, let
$$K_{\xi}:= \E^{\xi}\left[\int_0^\infty e^{-rs} \inf_{0\leq u\leq s} \big(\frac{{X^{1}_{s}}}{{X^{1}_{u}} }\big)^\alpha \d s\right].$$
Then, for any two constant $\xi^1$, $\xi^2$ such that $\xi^1 \leq \xi^2$, we have $K_{\xi^1}\leq K_{\xi^2}$.
\end{lemma}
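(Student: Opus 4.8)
The plan is to realize all the quantities $K_{\xi}$ on a single probability space via Girsanov's theorem and then exploit an elementary pathwise monotonicity in $\xi$.

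First I would change measure. For a constant $\xi$ with $|\xi|\le\kappa$, recall from Section~\ref{sec:settingcase} that $B^{\xi}_t:=B_t-\xi t$, $t\ge0$, is a standard Brownian motion under $\P^{\xi}$, and that under $\P^{\xi}$ one has $X^{1}_t=\exp\big((b+\sigma\xi-\tfrac{1}{2}\sigma^2)t+\sigma B^{\xi}_t\big)$ (consistently with the computation for $\xi=-\kappa$ in Section~\ref{sec:solvingOCkappa}). Hence, for $0\le u\le s$, using that $z\mapsto z^{\alpha}$ is increasing on $\R_+$,
\[
\Big(\frac{X^{1}_s}{X^{1}_u}\Big)^{\alpha}=\exp\Big(\alpha\big[(b+\sigma\xi-\tfrac{1}{2}\sigma^2)(s-u)+\sigma\big(B^{\xi}_s-B^{\xi}_u\big)\big]\Big),
\]
and, since $x\mapsto e^{x}$ is increasing, the infimum over $u\in[0,s]$ commutes with the exponential. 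Because the law of the increment family $\big(B^{\xi}_s-B^{\xi}_u\big)_{0\le u\le s}$ under $\P^{\xi}$ is that of $\big(W_s-W_u\big)_{0\le u\le s}$ for a standard Brownian motion $W$ on an auxiliary space with expectation $\E$ and does not depend on $\xi$, I would rewrite
\[
K_{\xi}=\E\bigg[\int_0^{\infty}e^{-rs}\exp\big(\alpha\,\Psi_s(\xi)\big)\,\d s\bigg],\qquad \Psi_s(\xi):=\inf_{0\le u\le s}\big[(b+\sigma\xi-\tfrac{1}{2}\sigma^2)(s-u)+\sigma\big(W_s-W_u\big)\big].
\]
Taking $u=s$ in the infimum gives $\exp(\alpha\Psi_s(\xi))\le1$, so $K_{\xi}\le 1/r<\infty$ and no integrability issue arises (matching $K^{\alpha}\le 1/r$ in Remark~\ref{rem:prop}).

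The conclusion is then a pathwise comparison. Fix a path of $W$, $s>0$ and $u\in[0,s]$: since $s-u\ge0$ and $\sigma>0$, the map $\xi\mapsto(b+\sigma\xi-\tfrac{1}{2}\sigma^2)(s-u)+\sigma\big(W_s-W_u\big)$ is nondecreasing, and taking the infimum over $u\in[0,s]$ preserves the order, so $\xi\mapsto\Psi_s(\xi)$ is nondecreasing for every $s\ge0$ and every path. As $\alpha>0$ and the exponential is increasing, for $\xi^1\le\xi^2$ one gets $\exp(\alpha\Psi_s(\xi^1))\le\exp(\alpha\Psi_s(\xi^2))$ pathwise for every $s$; integrating against $e^{-rs}\,\d s$ and taking expectations gives $K_{\xi^1}\le K_{\xi^2}$.

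I do not expect a genuine obstacle here: the only step requiring a little care is rewriting the running ratio $\inf_{0\le u\le s}(X^{1}_s/X^{1}_u)$ through the stationary increments of $B^{\xi}$, so that the dependence on the drift — hence on $\xi$ — becomes visibly monotone path by path. Once this representation is in place, the claim follows from the elementary monotonicity of the infimum and of the exponential, with no delicate estimate needed.
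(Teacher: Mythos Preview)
Your proposal is correct and follows essentially the same approach as the paper: both apply Girsanov's theorem for the constant kernel $\xi$ to rewrite the ratio $(X^1_s/X^1_u)^{\alpha}$ in terms of Brownian increments plus an explicit drift $\alpha\sigma\xi(s-u)$, then observe that the resulting integrand is pathwise nondecreasing in $\xi$ (the paper phrases this via Tonelli and the function $f(s,\xi)$, you phrase it via $\Psi_s(\xi)$, but the argument is the same). Your added remark that $\exp(\alpha\Psi_s(\xi))\le1$, hence $K_{\xi}\le 1/r$, is a nice touch that the paper does not spell out here.
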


\begin{proof}
	By the Tonelli's theorem, we have
	\begin{align*}
	K_{\xi}=\E^{\xi}\bigg[\int_0^\infty e^{-rs} \inf_{0\leq u\leq s} \big(\frac{{X^{1}_{s}}}{{X^{1}_{u}} }\big)^\alpha \d s\bigg]=\int_0^\infty e^{-rs}\E^{\xi}\Big[ \inf_{0\leq u\leq s} \big(\frac{{X^{1}_{s}} }{{X^{1}_{u}}}\big)^\alpha \Big]\d s =: \int_0^\infty e^{-rs} f(s,\xi)\d s.
	\end{align*}
By changing measure and applying Girsanov Theorem
	\begin{align*}
	f(s,\xi)=&\E^{\xi}\Big[ \inf_{0\leq u\leq s} \exp\big(\alpha\sigma(B_s-B_u)+\alpha(b-\frac{1}{2}\sigma^2)(s-u)\big) \Big]\\
	=&\E\Big[\inf_{0\leq u\leq s} \exp\big(\alpha\sigma(B_s-B_u)+\alpha(b-\frac{1}{2}\sigma^2)(s-u)+\alpha\sigma \xi(s-u)\big) \Big].
	\end{align*}
	It is easy to check that $f(s,\xi)$ is increasing in $\xi$. The proof is complete.
\end{proof}

\begin{remark}
\label{rem:compstat}
	Consider another irreversible investment problem in which now the economic shock is given by $Y$ evolving as
	\begin{displaymath}
		\d Y_t^{y}=\mu Y_t^{y} \d t+\sigma Y_t^{y}\d B_t, \ Y_0^{y}=y.
	\end{displaymath}
	The base capacity corresponding to the economic shocks $X$ and $Y$ are denoted by $\ell^X$ and $\ell^Y$, respectively. Now, if $x\leq y$ and $b\leq \mu$, we know that $X^{x}_t\leq Y^{y}_t$, for any $t\geq 0$. By Theorem \ref{thm:t1}, we have $\ell^X=K^X X_t^{x}$ and $\ell^Y=K^Y Y_t^{y}$, where
	\begin{align*}
		&K^X=\bigg(\E\left[\int_0^\infty e^{-rs}\epsilon^{-\kappa}_{s} \inf_{0\leq u\leq s} \big(\frac{{X^{1}_{s}} }{{X^{1}_{u}} }\big)^\alpha \d s\right]\bigg)^{\frac{1}{\alpha}},\\
		&K^Y=\bigg(\E\left[\int_0^\infty e^{-rs}\epsilon^{-\kappa}_{s} \inf_{0\leq u\leq s} \big(\frac{{Y^{1}_{s}} }{{Y^{1}_{u}} }\big)^\alpha ds\right]\bigg)^{\frac{1}{\alpha}}.
	\end{align*}
	Applying the arguments of the proof of Lemma \ref{lem:l2} yields that $K^X\leq K^Y$. Consequently, we have $\ell^X\leq \ell^Y$.
\end{remark}

\begin{remark}
	Consider now the irreversible investment problem
	\begin{displaymath}
		\sup_{I\in\mathcal{A}^\infty_{\kappa_i}}\inf_{\xi\in\Xi^{\kappa_i}}\E^\xi\bigg[\int_0^\infty  e^{-rt}\pi(X^{x}_t,C^{c,I}_t) \d t \int_0^\infty  e^{-rt} \d I_t)\bigg], \quad i=1,2,
	\end{displaymath}
	where $\kappa_1\leq \kappa_2$.
	 It is natural to conjecture that when the firm faces more ambiguity, the scale of investment would be decreased. Lemma \ref{lem:l2} indeed implies that the constant $K_\xi$ is increasing in $\xi$. Hence, the conjecture follows easily from Theorem \ref{thm:t1}.
\end{remark}


\section*{Acknowledgments}
\noindent Financial support by the German Research Foundation (DFG) through the Collaborative Research Centre 1283 ``Taming uncertainty and profiting from randomness and low regularity in analysis, stochastics and their applications'' is gratefully acknowledged.


\appendix

\section{}
\label{Appendix}

\renewcommand{\theequation}{A-\arabic{equation}}

In this section, we introduce some fundamental results related to $g$-expectation, which is derived from the solution to a BSDE. Let $(\Omega,\mathcal{F},\mathbb{F}:=\{\mathcal{F}_t\}_{t\in[0,T]},\P_0)$ be a complete filtered probability space, with filtration satisfying the usual conditions.

We consider
\begin{itemize}
\item $H^2_T(\mathbb{R}^d)$, the space of all predictable processes $\phi:[0,T]\times\Omega\mapsto \mathbb{R}^d$ such that $\E[\int_0^T |\phi_t|^2 \d t]<\infty$.
\end{itemize}

Assume that the generator $g:[0,T]\times \Omega\times \mathbb{R}^d\rightarrow \mathbb{R}$ satisfies requirements (i)-(ii) of Assumption \ref{a2}. For any fixed $t\in(0,T]$ and $X\in L^2(\mathcal{F}_t)$, consider the following type of BSDE:
\begin{equation}
\label{BSDE}
Y_s^X = X+\int_s^t g(r,Z_r^X)\d r - \int_s^t Z_r^X \d B_r,
\end{equation}
where $B=\{B_t\}_{t\in [0,T]}$ is a standard $\mathbb{F}$-Brownian motion under $\P_0$.

By \cite{PP90} there exists a unique pair $(Y^X,Z^X)\in H^2_T(\mathbb{R})\times H^2_T(\mathbb{R}^d)$ solving \eqref{BSDE}. In fact, $Y^X$ also satisfies $\E[\sup_{t\in[0,T]}|Y^X_t|^2]<\infty$.
 The \emph{conditional $g$-expectation of $X$} is then defined by
\begin{displaymath}
\mathcal{E}^g_{s,t}[X]:=Y_s^X, \quad s \in [0,T],\,\,t \in (0,T].
\end{displaymath}
If $t=T$ and $s=0$, for simplicity, we denote the \emph{$g$-expectation of $X$} by $\mathcal{E}^g[X]$. Furthermore, if $g$ also satisfies (iv) of Assumption \ref{a2}, it is easy to check that, for any $0\leq s\leq t_1\leq t_2\leq T$ and $X\in L^2(\mathcal{F}_{t_1})\subset L^2(\mathcal{F}_{t_2})$, we have
\begin{displaymath}
\mathcal{E}^g_{s,t_1}[X]=\mathcal{E}^g_{s,t_2}[X].
\end{displaymath}
Therefore, in this case, we may simplify the notation by omitting the terminal time and simply write $\mathcal{E}^g_s[X]$.

We now list some important (but standard) properties of $g$-expectation, without presenting their proof. For more details, we refer to the seminal papers \cite{CHMP} and \cite{P97}.
\begin{proposition}
\label{a21}
Suppose that the function $g$ satisfies (i)-(iii) of Assumption \ref{a2}. Then the conditional $g$-expectation is such that:
	\begin{itemize}
		\item[(1)] \textbf{Strict comparison}: if $X\leq Y$, then $\mathcal{E}^g_{t,T}[X]\leq \mathcal{E}_{t,T}^g[Y]$. Furthermore, if $\P(X<Y)>0$, then $\mathcal{E}^g_{t,T}[X] < \mathcal{E}_{t,T}^g[Y]$;
		\item[(2)] \textbf{Time-consistency}: for any $0\leq s\leq t\leq T$, $\mathcal{E}^g_{s,t}[\mathcal{E}^g_{t,T}[X]]=\mathcal{E}^g_{s,T}[X]$;
		\item[(3)] \textbf{Concavity}: $\mathcal{E}_{t,T}^g[\,\cdot\,]$ is concave; i.e., for any $X,Y\in L^2(\mathcal{F}_T)$ and $\lambda\in[0,1]$, we have $\mathcal{E}_{t,T}^g\big[\lambda X+(1-\lambda)Y\big]\geq \lambda\mathcal{E}_{t,T}^g[X]+(1-\lambda)\mathcal{E}_{t,T}^g[Y]$;
		\item[(4)] \textbf{Fatou's lemma}: Suppose that for any $n\in\mathbb{N}$, $\mathcal{E}^g[X_n]$ exists and $X_n\geq X$ (respectively, $X_n\leq X$), where $X\in L^2(\mathcal{F}_T)$. Then, we have
		$$
		\liminf_{n\rightarrow \infty}\mathcal{E}^g[X_n]\geq \mathcal{E}^g[\liminf_{n\rightarrow \infty} X_n] \quad (\text{respectively,}\,\, \limsup_{n\rightarrow \infty}\mathcal{E}^g[X_n]\leq \mathcal{E}^g[\limsup_{n\rightarrow \infty} X_n]).
		$$
	\end{itemize}
\end{proposition}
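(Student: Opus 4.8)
The plan is to derive all four properties from the standard theory of uniformly Lipschitz BSDEs applied to the defining equation \eqref{BSDE}: existence and uniqueness (Pardoux--Peng \cite{PP90}), the comparison theorem, the flow (semigroup) property that follows from uniqueness, and the $L^2$-stability estimates. All of these are available because, under Assumption \ref{a2}(i)-(ii), $g$ is uniformly Lipschitz in $z$ with $g(\cdot,0)\in H^2_T$. In the write-up I would invoke \cite{PP90,EPQ,CHMP,P97} for this background and spell out only the structural steps below.

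For (1), let $(Y^X,Z^X)$ and $(Y^Y,Z^Y)$ solve \eqref{BSDE} with terminal data $X\leq Y$. Linearizing the driver, $g(r,Z^Y_r)-g(r,Z^X_r)=\beta_r(Z^Y_r-Z^X_r)$ for a predictable process $\{\beta_r\}$ with $|\beta_r|\leq\kappa$ by the Lipschitz property, so the difference $\Delta Y:=Y^Y-Y^X$ has no drift under the probability measure $\Q\sim\P_0$ obtained from the Girsanov kernel $\beta$, whence $\mathcal{E}^g_{s,t}[Y]-\mathcal{E}^g_{s,t}[X]=\Delta Y_s=\E^{\Q}_s[Y-X]\geq 0$. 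Since $\Q$ and $\P_0$ are equivalent, $\P(X<Y)>0$ forces $\Q(X<Y)>0$, which gives the strict comparison. For (2), the flow property of \eqref{BSDE}, itself a consequence of uniqueness, says that the solution on $[s,T]$ with terminal $X$, restricted to $[s,t]$, solves the BSDE on $[s,t]$ with terminal $Y^X_t=\mathcal{E}^g_{t,T}[X]$; reading this at time $s$ gives $\mathcal{E}^g_{s,t}[\mathcal{E}^g_{t,T}[X]]=Y^X_s=\mathcal{E}^g_{s,T}[X]$. For (3), set $(\bar Y,\bar Z):=\lambda(Y^X,Z^X)+(1-\lambda)(Y^Y,Z^Y)$; then $(\bar Y,\bar Z)$ solves a BSDE with terminal value $\lambda X+(1-\lambda)Y$ and state-independent generator $h(r):=\lambda g(r,Z^X_r)+(1-\lambda)g(r,Z^Y_r)$, which by concavity of $g(r,\cdot)$ satisfies $h(r)\leq g(r,\lambda Z^X_r+(1-\lambda)Z^Y_r)=g(r,\bar Z_r)$; comparing this BSDE with the one having generator $g$ and the same terminal value yields $\bar Y_t\leq\mathcal{E}^g_{t,T}[\lambda X+(1-\lambda)Y]$, which is concavity.

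For (4), take the case $X_n\geq X$ (the other being symmetric), set $\tilde X_n:=\inf_{k\geq n}X_k$, and note that $X\leq\tilde X_n\uparrow\liminf_n X_n$ with $\tilde X_n\leq X_k$ for every $k\geq n$. Part (1) gives $\mathcal{E}^g[\tilde X_n]\leq\inf_{k\geq n}\mathcal{E}^g[X_k]\leq\liminf_n\mathcal{E}^g[X_n]$, so it remains to pass to the limit on the left via the monotone convergence property $\mathcal{E}^g[\tilde X_n]\uparrow\mathcal{E}^g[\liminf_n X_n]$. The hard part will be precisely this last limiting step: it relies on the $L^2$-stability estimate for BSDEs together with the bookkeeping that guarantees $\liminf_n X_n$ (and each $\tilde X_n$) lies in $L^2(\mathcal{F}_T)$, which here follows from the lower bound $\tilde X_n\geq X$ and the assumed existence of $\mathcal{E}^g[X_n]$. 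Everything else — the linearization in (1), the gluing in (2), and the concavity comparison in (3) — is classical and essentially computational.
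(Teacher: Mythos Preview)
The paper does not actually prove this proposition: it states the four properties as ``standard'' and refers the reader to \cite{CHMP} and \cite{P97} for details. Your sketch is precisely the standard route taken in those references---linearization plus Girsanov for (strict) comparison, uniqueness/flow for time-consistency, the concavity comparison for (3), and the $\inf_{k\ge n}$ truncation plus monotone-$L^2$ stability for Fatou---so there is nothing to contrast. One small point worth tightening in (4): your argument shows each $\tilde X_n\in L^2(\mathcal{F}_T)$ via the two-sided bound $X\le\tilde X_n\le X_n$, but to make sense of the right-hand side $\mathcal{E}^g[\liminf_n X_n]$ you still need $\liminf_n X_n\in L^2(\mathcal{F}_T)$, which is not automatic from the hypotheses as stated; in the paper this is never an issue because each application of (4) comes with an explicit $L^2$ dominating bound (cf.\ \eqref{i0} in the proof of Theorem~\ref{ii1}), so you may simply add this as a standing assumption.
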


\begin{proposition}
\label{A2}
	Suppose that the function $g$ satisfies requirements (i)-(ii) and (iv) in Assumption \ref{a2}. Then the conditional $g$-expectation is such that:
	\begin{itemize}
		\item[(1)] \textbf{Translation invariance}: if $Z\in L^2(\mathcal{F}_t)$, then, for all $X\in L^2(\mathcal{F}_T)$, $\mathcal{E}^g_t[X+Z]=\mathcal{E}^g_t[X]+Z$;
		\item[(2)] \textbf{Local property}: for an event $A\in\mathcal{F}_t$, we have $\mathcal{E}^g_t[X\mathds{1}_A+Y\mathds{1}_{A^c}]=\mathcal{E}^g_t[X]\mathds{1}_A+\mathcal{E}^g_t[Y]\mathds{1}_{A^c}$;
		\item[(3)] \textbf{Constant preserving}: if $X\in L^2(\mathcal{F}_t)$, we have $\mathcal{E}^g_t[X]=X$.
	\end{itemize}
\end{proposition}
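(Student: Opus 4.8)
The plan is to derive all three properties from the uniqueness part of the existence-and-uniqueness theorem of \cite{PP90} for the $L^2$-solution of the BSDE \eqref{BSDE}: for each claimed identity I produce, on the interval $[t,T]$, an explicit candidate pair $(Y,Z)$ satisfying the BSDE with the appropriate terminal datum, and uniqueness then forces it to coincide with the solution defining the $g$-expectation appearing on the left-hand side. Two elementary facts will be used: (a) any $\mathcal{F}_t$-measurable random variable is $\mathcal{F}_s$-measurable for every $s\in[t,T]$, so time-constant processes built from such variables are $\mathbb{F}$-adapted on $[t,T]$; and (b) for a bounded $\mathcal{F}_t$-measurable random variable $\zeta$ and $s\in[t,T]$ one may pull $\zeta$ through the integrals, $\zeta\int_s^T g(r,Z_r)\,\d r=\int_s^T \zeta\, g(r,Z_r)\,\d r$ and $\zeta\int_s^T Z_r\,\d B_r=\int_s^T \zeta Z_r\,\d B_r$.

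\emph{Constant preserving} (3): for $X\in L^2(\mathcal{F}_t)$, the pair $(Y_s,Z_s)\equiv(X,0)$, $s\in[t,T]$, is adapted by~(a), square-integrable, and — because $g(r,\omega,0)=0$ by Assumption~\ref{a2}(iv) — satisfies $Y_s=X+\int_s^T g(r,Z_r)\,\d r-\int_s^T Z_r\,\d B_r$; hence it is the solution, and $\mathcal{E}^{g}_t[X]=Y_t=X$. \emph{Translation invariance} (1): if $(Y^X,Z^X)$ solves the BSDE with terminal value $X\in L^2(\mathcal{F}_T)$ and $Z\in L^2(\mathcal{F}_t)$, then — crucially because the generator here does not depend on the $Y$-component — the pair $(Y^X_s+Z,\,Z^X_s)$ is adapted, square-integrable, and solves the BSDE with terminal value $X+Z$, so by uniqueness $\mathcal{E}^{g}_t[X+Z]=Y^X_t+Z=\mathcal{E}^{g}_t[X]+Z$.

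\emph{Local property} (2): let $(Y^X,Z^X)$ and $(Y^Y,Z^Y)$ be the solutions with terminal data $X,Y\in L^2(\mathcal{F}_T)$, and set $\widehat{Y}_s:=Y^X_s\mathds{1}_A+Y^Y_s\mathds{1}_{A^c}$ and $\widehat{Z}_s:=Z^X_s\mathds{1}_A+Z^Y_s\mathds{1}_{A^c}$, which are $\mathbb{F}$-adapted and square-integrable since $A\in\mathcal{F}_t$. Because $\widehat{Z}_r=Z^X_r$ on $A$ and $\widehat{Z}_r=Z^Y_r$ on $A^c$, one has $g(r,\widehat{Z}_r)=g(r,Z^X_r)\mathds{1}_A+g(r,Z^Y_r)\mathds{1}_{A^c}$; inserting this together with fact~(b) (applied to $\zeta=\mathds{1}_A$ and $\zeta=\mathds{1}_{A^c}$) into the two defining BSDEs shows that $(\widehat{Y},\widehat{Z})$ solves the BSDE with terminal value $X\mathds{1}_A+Y\mathds{1}_{A^c}$. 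Uniqueness then yields $\mathcal{E}^{g}_t[X\mathds{1}_A+Y\mathds{1}_{A^c}]=\widehat{Y}_t=\mathcal{E}^{g}_t[X]\mathds{1}_A+\mathcal{E}^{g}_t[Y]\mathds{1}_{A^c}$.

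I expect no substantive obstacle: once existence and uniqueness in $H^2_T$ are granted, the only point requiring care is the measurability bookkeeping ensuring the candidate processes are $\mathbb{F}$-adapted — which is precisely why $A$ (respectively $Z$, respectively $X$) must be $\mathcal{F}_t$-measurable, and why the arguments are carried out on $[t,T]$ rather than on all of $[0,T]$. It is worth noting where the hypotheses enter: translation invariance uses that the generator is independent of $y$ (a defining feature of the present $g$-expectation), while constant preserving uses the normalization $g(\cdot,\cdot,0)=0$ of Assumption~\ref{a2}(iv).
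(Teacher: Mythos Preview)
Your proof is correct. The paper does not actually prove Proposition~\ref{A2}: it explicitly lists these as ``standard properties of $g$-expectation, without presenting their proof'' and refers the reader to \cite{CHMP} and \cite{P97}. Your argument via uniqueness of the BSDE solution is exactly the standard proof found in that literature, so there is nothing to compare.
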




\section{}
\label{AppendixC}

\renewcommand{\theequation}{B-\arabic{equation}}

	In this Appendix we prove Theorem \ref{ii15}; that is, the necessity of the first-order conditions for optimality.
	
As in \cite{BR}, the proof will be divided into the following steps. First, we introduce a suitable linear problem and we characterize its solutions. Second, we show, by a perturbation method, that the optimal investment plan to \eqref{OC} also solves a linear optimization problem. We shall see that, because of our setting under Knightian uncertainty, some of the arguments in \cite{BR} needed a careful, technical, and not immediate extension (see, in particular, Lemmata \ref{l6} and \ref{l2} below).

\begin{lemma}
	\label{l1-nec}
		Let $\phi$ be a right-continuous and $\mathbb{F}$-adapted process, $\widehat{\P}$ a probability measure on $(\Omega,\mathcal{F})$, which is equivalent to $\P_0$, and denote by $\widehat{\E}$ the expectation under $\widehat{\P}$. Suppose that $I^*$ is optimal for the linear optimization problem
		\begin{equation}
		\label{linear}
		    \sup_{I \in\mathcal{A}'} \widehat{\E}\bigg[\int_0^T \phi_t \d I_t\bigg],
		\end{equation}
		where
		$$\mathcal{A}':=\Big\{I\in\mathcal{A}_g\,\big|\, C^I_t \textrm{ is square-integrable for any } t\in[0,T]\Big\}.$$
		
		Then, we have $\phi_t\leq 0$ for all $t$, $\P_0$-a.s. and
	\begin{equation}
		\label{linear2}
	\widehat{\E}\bigg[\int_0^T \phi_t \d I^{*}_t\bigg] = 0.
	\end{equation}
	\end{lemma}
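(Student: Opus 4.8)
The plan is to combine a lump-sum perturbation of $I^{*}$ with the obvious admissibility of the null plan. Write $v^{*}:=\sup_{I\in\mathcal{A}'}\widehat{\E}\big[\int_0^T \phi_t\,\d I_t\big]$ for the optimal value; by hypothesis this is a finite real number attained at $I^{*}$, so $v^{*}=\widehat{\E}\big[\int_0^T \phi_t\,\d I^{*}_t\big]$. First I would note that the null plan $I\equiv 0$ belongs to $\mathcal{A}'$: indeed $\mathcal{E}^{\widetilde{g}}[0]=0$ (the BSDE with zero terminal datum and driver $\widetilde g$, which satisfies $\widetilde g(\cdot,0)=-g(\cdot,0)=0$, has the null solution), and $C^{0}_t=ce^{-\delta t}$ is bounded, hence square-integrable. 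Testing \eqref{linear} with $I\equiv 0$ gives $v^{*}\geq 0$. The core of the argument is then to prove that $\phi_t\leq 0$ for all $t$, $\P_0$-a.s.; once this is available, $\d I^{*}$ being a nonnegative measure yields $\int_0^T \phi_t\,\d I^{*}_t\leq 0$, so that $v^{*}=\widehat{\E}\big[\int_0^T \phi_t\,\d I^{*}_t\big]\leq 0$, whence $v^{*}=0$, which is exactly \eqref{linear2}.

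To show $\phi\leq 0$ I would argue by contradiction. Suppose there is some $q\in(\mathbb{Q}\cap[0,T])\cup\{T\}$ with $\P_0(\phi_q>0)>0$. Since $\phi_q$ is $\mathcal{F}_q$-measurable and (being a value of a real-valued process) finite, the sets $B_n:=\{1/n<\phi_q\leq n\}\in\mathcal{F}_q$ increase to $\{\phi_q>0\}$, so $\widehat{\P}(B_n)>0$ (using $\widehat{\P}\sim\P_0$) for all $n$ large enough; fix one such $n$. For $m>0$ set $I^{m}_t:=I^{*}_t+m\,\mathds{1}_{B_n}\mathds{1}_{\{t\geq q\}}$, obtained by adding a lump-sum investment of size $m$ at time $q$ on the event $B_n$. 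Then $I^m$ is nondecreasing, right-continuous, $\mathbb{F}$-adapted, $I^{m}_{0^-}=0$, and its capacity $C^{I^{m}}_t=C^{I^{*}}_t+m\,e^{\delta(q-t)}\mathds{1}_{B_n}\mathds{1}_{\{t\geq q\}}$ is square-integrable for each $t$ because the added term is bounded. Moreover $\int_0^T e^{-rt}\,\d I^{m}_t=\int_0^T e^{-rt}\,\d I^{*}_t+m\,e^{-rq}\mathds{1}_{B_n}$ with $0\leq m\,e^{-rq}\mathds{1}_{B_n}\leq m$, so the comparison theorem for BSDEs together with translation invariance with respect to constants (Proposition \ref{A2}, applicable to $\widetilde g$ since it is independent of the $y$-variable) give
\[
\mathcal{E}^{\widetilde{g}}\Big[\int_0^T e^{-rt}\,\d I^{m}_t\Big]\leq \mathcal{E}^{\widetilde{g}}\Big[\int_0^T e^{-rt}\,\d I^{*}_t\Big]+m<\infty,
\]
and hence $I^{m}\in\mathcal{A}'$. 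On the other hand, since $\phi_q\mathds{1}_{B_n}$ is bounded,
\[
\widehat{\E}\Big[\int_0^T \phi_t\,\d I^{m}_t\Big]=v^{*}+m\,\widehat{\E}\big[\phi_q\mathds{1}_{B_n}\big]\ \geq\ v^{*}+\frac{m}{n}\,\widehat{\P}(B_n),
\]
which tends to $+\infty$ as $m\to\infty$ because $\widehat{\P}(B_n)>0$, contradicting the finiteness of $v^{*}$. Therefore $\phi_q\leq 0$ $\P_0$-a.s.\ for every such $q$.

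It then remains to pass from this countable family of times to all times. By right-continuity of $t\mapsto\phi_t$, if $\phi_t>0$ for some $t\in[0,T)$ then $\phi_q>0$ for some rational $q\in[t,T]$, so that
\[
\big\{\exists\, t\in[0,T]:\ \phi_t>0\big\}\ \subseteq\ \Big(\bigcup_{q\in\mathbb{Q}\cap[0,T]}\{\phi_q>0\}\Big)\cup\{\phi_T>0\},
\]
which is a $\P_0$-null set by the previous step. Hence $\phi_t\leq 0$ for all $t$, $\P_0$-a.s., and the proof concludes as in the first paragraph.

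I expect the only genuine obstacle to be the bookkeeping needed to keep the perturbed plans $I^{m}$ inside $\mathcal{A}'$ — this is where the nonlinearity of $\mathcal{E}^{\widetilde{g}}$ has to be handled, via the comparison theorem for BSDEs together with translation invariance — and, to a lesser extent, the truncation $B_n=\{1/n<\phi_q\leq n\}$, which is forced by the fact that $\phi$ need not be integrable and which is precisely what makes $\widehat{\E}[\phi_q\mathds{1}_{B_n}]$ a well-defined, strictly positive real number. Everything else is routine.
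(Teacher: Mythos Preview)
Your proof is correct and follows essentially the same approach as the paper: perturb $I^{*}$ by a lump-sum investment at a fixed time on an event where $\phi$ is positive, then derive a contradiction. The only differences are cosmetic --- the paper adds a unit lump sum and contradicts the \emph{optimality} of $I^{*}$ directly (a single strict improvement suffices), whereas you let the lump size $m\to\infty$ to contradict \emph{finiteness} of $v^{*}$, and you make the right-continuity passage from countably many times to all times explicit while the paper leaves it implicit.
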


    \begin{proof}
		Clearly, since $\mathcal{A}' \ni I\equiv 0$ is a priori suboptimal, one has
		$$\widehat{\E}\bigg[\int_0^T \phi_t \d I^{*}_t\bigg] \geq 0.$$
		
		To show the reverse inequality, suppose that there exists $t_o\in[0,T]$ such that $\esssup_{\omega \in \Omega}\phi_{t_o}(\omega)>0$. Then, since $\phi$ is adapted, there exists $\varepsilon>0$ and $Q\in\mathcal{F}_{t_o}$ with $\P_0(Q)>0$ such that $\phi_{t_o}\geq \varepsilon$ on $Q$. Noting that $\widehat{\P}$ is equivalent to $\P_0$, we have $\widehat{\P}(Q)>0$. Considering then the adapted, nondecreasing process
		$$\hat{I}_t:= I^{*}_t + \mathds{1}_{{Q}\times[t_o,T]}, \quad \hat{I}_{0^-}=0,$$
		one easily finds that $\hat{I}\in\mathcal{A}'$ and
		$$\widehat{\E}\bigg[\int_0^T \phi_t \d \hat{I}_t\bigg] \geq \widehat{\E}\bigg[\int_0^T \phi_t \d I^{*}_t\bigg] + \varepsilon \widehat{\P}(Q) > \widehat{\E}\bigg[\int_0^T \phi_t \d I^{*}_t\bigg],$$
		thus contradicting the optimality of $I^*$. Hence, $\phi_{t}\leq0$ for all $t$, $\P_0$-a.s.\ and \eqref{linear2} follows.
    \end{proof}

Before we establish that the optimal irreversible investment plan for problem \eqref{OC} also solves a linear problem like \eqref{linear}, we need the following technical result based on the continuity of $g$ and some estimates for BSDEs.
\begin{lemma}
\label{l6}
Suppose that the function $g$ satisfies Assumption \ref{a2} (i)-(iii) and (v) in Theorem \ref{ii15} and recall the set $D$ as defined in Proposition \ref{prop:A1}. Assume that $\{X_n\}_{n\in\mathbb{N}}$ and $X$ are random variables in $L^2(\mathcal{F}_T)$ such that
\begin{displaymath}
\lim_{n\rightarrow \infty}\E[|X_n-X|^2]=0.
\end{displaymath}

Then, there exists a family of $\mathbb{F}$-progressively measurable processes $\{\zeta^n\}_{n\in\mathbb{N}}\subset D$ and $\zeta\in D$ such that - denoting by $\P_n$ and $\P$ the probability measures with Girsanov kernel $\zeta^n$ and $\zeta$ (with respect to $\P_0$), respectively - one has that the density $\frac{\d \P_n}{d\P}$  is $p$-integrable for any $p\geq 1$, and $\frac{\d \P_n}{d\P} \to 1$ as $n\uparrow \infty$. Moreover,
\begin{displaymath}
\mathcal{E}^g[X_n]=\E^{\P_n}\left[X_n+\int_0^T f(s,\zeta^n_s)\d s\right],\  \mathcal{E}^g[X]=\E^\P\left[X+\int_0^T f(s,\zeta_s)\d s\right],
\end{displaymath}
where $f$ is the convex dual of $g$.
\end{lemma}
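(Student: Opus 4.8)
The plan is to read the kernels $\zeta^n,\zeta$ off the BSDEs that define $\mathcal{E}^g[X_n]$ and $\mathcal{E}^g[X]$, linearising the driver by means of condition (v), and then to transfer the $L^2$-convergence of the terminal data to convergence of the corresponding Girsanov densities by combining the $L^2$-stability of Lipschitz BSDEs with the continuity in (v). First, I would let $(Y^n,Z^n)$ and $(Y,Z)$ be the solutions in $H^2_T(\R)\times H^2_T(\R)$ of the BSDE \eqref{BSDE} with terminal data $X_n$ and $X$, so $\mathcal{E}^g[X_n]=Y^n_0$ and $\mathcal{E}^g[X]=Y_0$. By (v) I would set $\zeta^n_t:=x(t,\omega,Z^n_t)$ and $\zeta_t:=x(t,\omega,Z_t)$; these are progressively measurable (a measurable-selection argument applies, since $g,f$ are progressively measurable and continuous in the spatial variable), take values in $[-\kappa,\kappa]$, and satisfy the duality identities $g(t,Z^n_t)=\zeta^n_t Z^n_t+f(t,\zeta^n_t)$ and $g(t,Z_t)=\zeta_t Z_t+f(t,\zeta_t)$ for a.e.\ $(t,\omega)$. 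Feeding the latter into \eqref{BSDE} yields
$$Y_t=X+\int_t^T f(s,\zeta_s)\,\d s-\int_t^T Z_s\,\d\Big(B_s-\int_0^s\zeta_u\,\d u\Big),$$
and since $|\zeta|\le\kappa$ Novikov's condition makes $B-\int_0^\cdot\zeta_u\,\d u$ a Brownian motion under $\P:=\P^\zeta$. From $|f(s,\zeta_s)|=|g(s,Z_s)-\zeta_s Z_s|\le|g(s,0)|+2\kappa|Z_s|$ and Assumption \ref{a2}(i) one gets $\zeta\in D$ (and likewise $\zeta^n\in D$); $Z\in H^2_T(\R)$ and the fact that the density of $\P$ with respect to $\P_0$ has moments of all orders supply enough integrability to localise the stochastic integral and pass to the limit, so that $Y_0=\E^\P[X+\int_0^T f(s,\zeta_s)\,\d s]$; the same argument with $\P_n:=\P^{\zeta^n}$ gives $Y^n_0=\E^{\P_n}[X_n+\int_0^T f(s,\zeta^n_s)\,\d s]$. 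This produces the two displayed identities (and incidentally shows $\zeta,\zeta^n$ attain the essential infima in \eqref{repr-g}).

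It then remains to prove the two convergence claims. For $p$-integrability I would note $\frac{\d\P_n}{\d\P}\big|_{\mathcal{F}_T}=\exp\big(\int_0^T(\zeta^n_s-\zeta_s)\,\d B_s-\frac12\int_0^T((\zeta^n_s)^2-\zeta_s^2)\,\d s\big)$ and, changing measure back to $\P_0$ and using $|\zeta^n|,|\zeta|\le\kappa$, one finds $\E^\P[(\frac{\d\P_n}{\d\P})^p]\le e^{2p(p-1)\kappa^2T}$ for every $p\ge1$, uniformly in $n$, because Dol\'eans--Dade exponentials of stochastic integrals with bounded integrands have all moments; in particular $\{\frac{\d\P_n}{\d\P}\}_n$ is uniformly integrable under $\P$. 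For the convergence, the $L^2$-stability estimate for BSDEs (common Lipschitz driver $g$, terminal data converging in $L^2$) gives $\|Z^n-Z\|_{H^2_T(\R)}\to0$; then every subsequence of $(Z^n)$ admits a further subsequence converging $\d t\otimes\d\P_0$-a.e.\ to $Z$, along which continuity of $z\mapsto x(t,\omega,z)$ together with the domination $|\zeta^n-\zeta|\le2\kappa$ give, by dominated convergence, $\|\zeta^n-\zeta\|_{H^2_T(\R)}\to0$; by the subsequence principle $\zeta^n\to\zeta$ in $H^2_T(\R)$ for the whole sequence. Hence $\int_0^T(\zeta^n_s-\zeta_s)\,\d B_s\to0$ in $L^2(\P_0)$ and $\int_0^T((\zeta^n_s)^2-\zeta_s^2)\,\d s\to0$ in $L^1(\P_0)$, so $\log\frac{\d\P_n}{\d\P}\to0$ in probability; combined with the uniform integrability this yields $\frac{\d\P_n}{\d\P}\to1$ in $L^1(\P)$, and $\P$-a.s.\ along a subsequence.

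The hard part will be extracting genuine content from the qualitative hypothesis (v): turning it into a progressively measurable worst-case kernel $\zeta=x(\cdot,Z_\cdot)$ and then pushing the $H^2_T$-convergence of $(Z^n)$ through the nonlinear map $x$. The uniform bound $|\zeta^n|,|\zeta|\le\kappa$ is what makes both the dominated-convergence step for the kernels and the exponential-moment bounds for the densities work, and it is precisely this feature --- unavailable for the general variational preferences of Remark \ref{r1} --- that restricts the necessity part of the first-order conditions to drivers satisfying (v).
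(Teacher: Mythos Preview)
Your proposal is correct and follows essentially the same route as the paper: define $\zeta^n,\zeta$ via condition (v) applied to the $Z$-components of the BSDEs with terminal data $X_n,X$, linearise the driver and apply Girsanov to obtain the representations, then use the $L^2$-stability estimate for Lipschitz BSDEs to get $Z^n\to Z$ in $H^2_T$ and transfer this via the continuity in (v) to $\zeta^n\to\zeta$, from which the density convergence follows. Your treatment is in fact more careful than the paper's on two points: you upgrade the subsequential a.e.\ convergence of $\zeta^n$ to full-sequence $H^2_T$-convergence via the subsequence principle, and you specify the mode of convergence of the densities ($L^1(\P)$, via the uniform moment bound) rather than leaving it implicit.
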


\begin{proof}
For $t\in[0,T]$, consider the following BSDEs under $\P_0$
\begin{align*}
Y_t&=X+\int_t^T g(s,Z_s)\d s-\int_t^T Z_s\d B_s,\\
Y_t^n&=X^n+\int_t^T g(s,Z_s^n)\d s-\int_t^T Z_s^n\d B_s.
\end{align*}
Let $\zeta$ and $\zeta^n$ be two progressively measurable processes such that, for any $s\in[0,T]$,
\begin{align}
\label{eqzeta}
f(s,\zeta_s)=g(s,Z_s)-\zeta_s Z_s, \ 	f(s,\zeta^n_s)=g(s,Z_s^n)-\zeta^n_s Z_s^n,
\end{align}
and define $\P$ and $\P_n$ as two probability measures with Girsanov kernel $\zeta$ and $\zeta^n$ (with respect to $\P_0$), respectively.
By the assumptions on $g$, we have $|\zeta_s|\leq \kappa$, $|\zeta^n_s|\leq \kappa$ for any $s\in[0,T]$.
Hence, by applying the Girsanov transformation, and using \eqref{eqzeta} we have
\begin{align*}
&Y_0=\mathcal{E}^{g}[X]=\E^{\P}\left[X+\int_0^T f(s,\zeta_s)\d s\right], \\ &Y_0^n=\mathcal{E}^{g}[X_n]=\E^{\P^n}\left[X_n+\int_0^T f(s,\zeta^n_s)\d s\right].
\end{align*}
Since standard estimates for solutions to BSDEs (cf.\ \cite{EPQ}) yield that
\begin{displaymath}
\E\left[\int_0^T|Z_s^n-Z_s|^2\d s\right]\leq C \E\big[|X-X_n|^2\big]\rightarrow 0, \textrm{ as } n\rightarrow \infty,
\end{displaymath}
we can choose a subsequence, still denoted by $\{Z^n\}$, such that $Z^n\rightarrow Z$, a.e., a.s. By Assumption (v), it follows that $\zeta^n\rightarrow \zeta$, a.e., a.s., which in turn implies that $\frac{d\P^n}{d\P}\rightarrow 1$.

Finally, the fact that $\frac{d\P^n}{d\P}$ is $p$-integrable for any $p\geq 1$ follows from the boundedness of $\zeta$ and $\zeta^n$.
\end{proof}

    \begin{lemma}
		\label{l2}
    	Let $I^\star\in\mathcal{A}_g$ be the optimal investment plan for \eqref{OC}. Recall $\mathcal{A}':=\{I\in\mathcal{A}_g\,|\, C^I_t \textrm{ is square-integrable for any } t\in[0,T]\}$. Suppose that for any $a>0$ and any $t\in[0,T]$ one has $\E\big[|\pi_c(X_t,a)|^2\big]<\infty$. Then, there exists some $\P^\star\in \mathcal{P}_g(I^\star)$ such that $I^\star$ is also optimal for the linear optimization problem
    	\begin{displaymath}
    		\sup_{I \in{\mathcal{A}}'}\E^{\P^\star}\bigg[\int_0^T e^{-rt}\phi^\star_t\d I_t\bigg],
    	\end{displaymath}
    where
    	\begin{displaymath}
    		\phi^\star_t:=\E^{\P^{\star}}_t\bigg[\int_t^T e^{-(r+\delta)(s-t)}\pi_c(X_s,C^\star_s)\d s\bigg]-1,
    	\end{displaymath}
    	and $C^\star$ is the production capacity associated to $I^\star$.
    \end{lemma}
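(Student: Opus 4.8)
The plan is to adapt the perturbation argument of \cite[Lemma 4.5]{BR} to the multiple-priors setting by using the BSDE-continuity provided by Lemma \ref{l6}. First I would fix the optimal plan $I^\star\in\mathcal{A}_g$ and, for any $I\in\mathcal{A}'$ and $\varepsilon\in(0,1]$, consider the perturbed plan $I^\varepsilon:=(1-\varepsilon)I^\star+\varepsilon I$, which is again admissible with associated capacity $C^{I^\varepsilon}=(1-\varepsilon)C^\star+\varepsilon C^I$. By optimality of $I^\star$ one has $\Pi(I^\star)\geq\Pi(I^\varepsilon)$, so
\begin{displaymath}
0\leq \Pi(I^\star)-\Pi(I^\varepsilon)=\mathcal{E}^g\big[\mathcal{J}(I^\star)\big]-\mathcal{E}^g\big[\mathcal{J}(I^\varepsilon)\big],
\end{displaymath}
where $\mathcal{J}(I):=\int_0^T e^{-rt}\pi(X_t,C^I_t)\d t-\int_0^T e^{-rt}\d I_t$. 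Now pick $\P^\varepsilon:=\P^{\zeta^\varepsilon}\in\mathcal{P}_g(I^\varepsilon)$ realizing the worst-case scenario for $I^\varepsilon$; then by the representation \eqref{repr-g} of Proposition \ref{prop:A1},
\begin{displaymath}
\mathcal{E}^g\big[\mathcal{J}(I^\star)\big]-\mathcal{E}^g\big[\mathcal{J}(I^\varepsilon)\big]
\geq \E^{\P^\varepsilon}\big[\mathcal{J}(I^\star)-\mathcal{J}(I^\varepsilon)\big]
=\E^{\P^\varepsilon}\bigg[\int_0^T e^{-rt}\big(\pi(X_t,C^\star_t)-\pi(X_t,C^{I^\varepsilon}_t)\big)\d t-\varepsilon\!\int_0^T e^{-rt}\d(I^\star_t-I_t)\bigg].
\end{displaymath}
Since $C^\star-C^{I^\varepsilon}=\varepsilon(C^\star-C^I)$ and $\pi(x,\cdot)$ is concave and $C^1$, dividing by $\varepsilon$ and letting $\varepsilon\downarrow 0$ should, by dominated convergence and the monotone behaviour of difference quotients of concave functions, yield a ``directional-derivative'' inequality. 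The difference quotient $\varepsilon^{-1}\big(\pi(X_t,C^{I^\varepsilon}_t)-\pi(X_t,C^\star_t)\big)$ increases to $\pi_c(X_t,C^\star_t)(C^I_t-C^\star_t)$ as $\varepsilon\downarrow0$, so along the way one obtains
\begin{displaymath}
0\geq \liminf_{\varepsilon\downarrow0}\E^{\P^\varepsilon}\bigg[\int_0^T e^{-rt}\pi_c(X_t,C^\star_t)(C^\star_t-C^I_t)\d t-\int_0^T e^{-rt}\d(I^\star_t-I_t)\bigg].
\end{displaymath}

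The crucial step is to pass to the limit in the probability measures $\P^\varepsilon$. Here I would invoke Lemma \ref{l6}: since $\mathcal{J}(I^\varepsilon)\to\mathcal{J}(I^\star)$ in $L^2(\mathcal{F}_T)$ as $\varepsilon\downarrow0$ (which follows from $C^{I^\varepsilon}\to C^\star$, the square-integrability of $C^I,C^\star$ guaranteed by $I\in\mathcal{A}'$, the local Lipschitz/concavity bounds on $\pi$, and the hypothesis $\E[|\pi_c(X_t,a)|^2]<\infty$), Lemma \ref{l6} produces (along a subsequence) worst-case kernels $\zeta^\varepsilon$ and a limiting kernel $\zeta^\star$ with $\P^\star:=\P^{\zeta^\star}\in\mathcal{P}_g(I^\star)$, such that $\d\P^\varepsilon/\d\P^\star\to1$ and $\d\P^\varepsilon/\d\P^\star$ is $p$-integrable for all $p\geq1$, uniformly enough to carry the limit inside the expectation. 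Then, using the Fubini--Tonelli manipulation already performed in the proof of Theorem \ref{ii13} to rewrite
\begin{displaymath}
\E^{\P^\star}\bigg[\int_0^T e^{-rt}\pi_c(X_t,C^\star_t)\Big(\int_0^t e^{-\delta(t-s)}\d(I^\star_s-I_s)\Big)\d t\bigg]
=\E^{\P^\star}\bigg[\int_0^T e^{-rt}\Big(\int_t^T e^{-(r+\delta)(s-t)}\pi_c(X_s,C^\star_s)\d s\Big)\d(I^\star_t-I_t)\bigg],
\end{displaymath}
together with the tower property $\E^{\P^\star}[\,\cdot\,\d I_t]=\E^{\P^\star}[\E^{\P^\star}_t[\,\cdot\,]\d I_t]$ (Theorem 1.33 in \cite{J79}), we arrive at
\begin{displaymath}
\E^{\P^\star}\bigg[\int_0^T e^{-rt}\phi^\star_t\,\d(I^\star_t-I_t)\bigg]\geq 0\qquad\text{for all }I\in\mathcal{A}',
\end{displaymath}
which is exactly the optimality of $I^\star$ for the stated linear problem.

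The main obstacle I expect is the limit $\varepsilon\downarrow0$ in the measures: one needs enough uniform integrability to justify $\E^{\P^\varepsilon}[\,\cdot\,]\to\E^{\P^\star}[\,\cdot\,]$ applied to the (not necessarily bounded) integrand $\int_0^T e^{-rt}\phi^\varepsilon_t\,\d(I^\star_t-I_t)$, where $\phi^\varepsilon_t:=\E^{\P^\varepsilon}_t[\int_t^T e^{-(r+\delta)(s-t)}\pi_c(X_s,C^{I^\varepsilon}_s)\d s]-1$. This is where the $p$-integrability of $\d\P^\varepsilon/\d\P^\star$ from Lemma \ref{l6}, combined with an $L^2(\P_0)$-bound on the integrand coming from (H1), (H2) of Assumption \ref{a3}, the Lipschitz continuity of $g$ (hence boundedness of all Girsanov kernels by $\kappa$), and the admissibility constraint $I,I^\star\in\mathcal{A}_g\subset\mathcal{A}'$, must be assembled carefully via Hölder's inequality; a secondary subtlety is that $\phi^\varepsilon_t$ involves a conditional expectation under the varying measure $\P^\varepsilon$, so one should also control $\E^{\P^\varepsilon}_t[\,\cdot\,]-\E^{\P^\star}_t[\,\cdot\,]$ in an appropriate norm, again using $\d\P^\varepsilon/\d\P^\star\to1$. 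Once these uniform estimates are in place the remaining steps are the routine rearrangements already appearing in Theorem \ref{ii13}.
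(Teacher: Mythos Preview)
Your overall strategy---perturb $I^\star$ toward $I$, pick a worst-case prior $\P^\varepsilon$ for $I^\varepsilon$, linearize via concavity, use Lemma~\ref{l6} to pass to a limiting prior $\P^\star\in\mathcal{P}_g(I^\star)$, then rewrite via Fubini---is exactly the paper's approach. Two points deserve correction or simplification.

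\emph{First, a sign error.} When $\P^\varepsilon\in\mathcal{P}_g(I^\varepsilon)$ realizes the infimum for $\mathcal{J}(I^\varepsilon)$, the representation \eqref{repr-g} gives
\[
\mathcal{E}^g[\mathcal{J}(I^\star)]-\mathcal{E}^g[\mathcal{J}(I^\varepsilon)]
\;\le\;
\E^{\P^\varepsilon}\big[\mathcal{J}(I^\star)-\mathcal{J}(I^\varepsilon)\big],
\]
not $\ge$ as you wrote: the infimum for $\mathcal{J}(I^\star)$ is no larger than the value at the particular $\P^\varepsilon$, while for $\mathcal{J}(I^\varepsilon)$ equality holds. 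The paper therefore starts from $0\ge\frac{1}{\varepsilon}\big(\mathcal{E}^g[\mathcal{J}(I^\varepsilon)]-\mathcal{E}^g[\mathcal{J}(I^\star)]\big)$ and bounds this from below by $\E^{\P^\varepsilon}[\ldots]$, linearizing at $C^\varepsilon$ (so $\pi_c(X_t,C^\varepsilon_t)$ appears, not $\pi_c(X_t,C^\star_t)$). With the signs corrected your chain still closes, but as written the displayed inequalities do not chain.

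\emph{Second, an unnecessary complication.} You introduce $\phi^\varepsilon_t=\E^{\P^\varepsilon}_t[\,\cdot\,]-1$ and then worry about controlling $\E^{\P^\varepsilon}_t[\,\cdot\,]-\E^{\P^\star}_t[\,\cdot\,]$. The paper avoids conditional expectations under the moving measure entirely: it keeps the raw integrand and writes
\[
\E^{\P^\varepsilon}[\,\cdot\,]=\E^{\P^\star}\!\Big[\tfrac{\d\P^\varepsilon}{\d\P^\star}(\,\cdot\,)\Big],
\]
defining $\Phi^\varepsilon_t:=\tfrac{\d\P^\varepsilon}{\d\P^\star}\big(\int_t^T e^{-(r+\delta)(s-t)}\pi_c(X_s,C^\varepsilon_s)\,\d s-1\big)$. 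The limit is then taken in two pieces: Fatou's lemma gives the $\d I_t$-side (since $\Phi^\varepsilon_t\to\Phi^\star_t$ and the integrand is nonnegative against $\d I$), while uniform integrability---obtained from the $L^p$-bound on $\tfrac{\d\P^\varepsilon}{\d\P^\star}$ together with the estimate $\int_0^T e^{-rt}\Phi^\varepsilon_t\,\d I^\star_t\le M\tfrac{\d\P^\varepsilon}{\d\P^\star}\big(\int_0^T e^{-rs}\pi^\star(X_s,r,\delta)\,\d s+1+\sup_t c^\star(X_t,r,\delta)\big)$ via Assumption~\ref{a3}---handles the $\d I^\star_t$-side. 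Projection onto $\mathcal{F}_t$ (Jacod's Theorem~1.33) is applied only once, at the very end, under the fixed measure $\P^\star$. This sidesteps your ``secondary subtlety'' altogether and is the cleaner route.
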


    \begin{proof}
Let $I^\star$ be optimal for \eqref{OC}. By Lemma \ref{ii3} and Assumption \ref{a3}, we have $I^\star\in\mathcal{A}'$. For any constant $\varepsilon\in(0,1)$ and any $I\in\mathcal{A}'$, let $I^\varepsilon:=\varepsilon I+(1-\varepsilon)I^{\star}$. The capacities associated to $I$, $I^\varepsilon$ and $I^\star$   are denoted by $C$, $C^\varepsilon$, and $C^\star$, respectively. By \eqref{solC}, we have $C^\varepsilon=\varepsilon C+(1-\varepsilon) C^\star$ and $c+\int_0^t e^{\delta s}\d I^\star_s=e^{\delta t} C^\star_t$. It is easy to check that $I^\varepsilon\in\mathcal{A}'$.
	
	We claim (and prove later) that
     \begin{equation}
		\label{converge}
     \E\left[\bigg|\int_0^T e^{-rt}\big(\pi(X_t,C^\varepsilon_t)\d t-\d I^\varepsilon_t\big)-\int_0^T e^{-rt}\big(\pi(X_t,C^\star_t)\d t-\d I^\star_t\big)\bigg|^2\right]\rightarrow 0.
     \end{equation}
	Then, by Lemma \ref{l6}, there exist bounded and progressively measurable $\zeta^\varepsilon,\zeta \in D$ such that - letting $\P^{\varepsilon}$ and $\P$ the two probability measures with Girsanov kernels $\zeta^\varepsilon$ and $\zeta$ (with respect to $\P_0$), respectively - one has
\begin{align*}
      &\mathcal{E}^g\bigg[\int_0^T e^{-rt}\big(\pi(X_t,C^\varepsilon_t)\d t-\d I^\varepsilon_t\big)\bigg]=\E^{\P^\varepsilon}\bigg[\int_0^T e^{-rt}\big(\pi(X_t,C^\varepsilon_t)\d t-\d I^\varepsilon_t\big)+\int_0^T f(s,\zeta^\varepsilon_s)\d s\bigg],\\
      &\mathcal{E}^g\bigg[\int_0^T e^{-rt}\big(\pi(X_t,C^\star_t)\d t-\d I^\star_t\big)\bigg]=\E^{\P^\star}\bigg[\int_0^T e^{-rt}\big(\pi(X_t,C^\star_t)\d t-\d I^\star_t\big)+\int_0^T f(s,\zeta^\star_s)\d s\bigg].
      \end{align*}
      Since $I^\star$ is optimal for \eqref{OC}, $I^\varepsilon\in\mathcal{A}'\subset\mathcal{A}_g$ and $\pi$ is concave in its second argument, it is easy to check that
    	\begin{equation}\label{0}\begin{split}
    		0\geq &\frac{1}{\varepsilon}\bigg(\mathcal{E}^g\bigg[\int_0^T e^{-rt}\big(\pi(X_t,C^\varepsilon_t)\d t-\d I^\varepsilon_t\big)\bigg]-\mathcal{E}^g\bigg[\int_0^T e^{-rt}\big(\pi(X_t, C^\star_t)\d t-\d I^\star_t\big)\bigg]\bigg)\\
    \geq &\frac{1}{\varepsilon}\bigg(\E^{\P^\varepsilon}\bigg[\int_0^T e^{-rt}\big(\pi(X_t,C^\varepsilon_t)\d t-\d I^\varepsilon_t\big)\bigg]-\E^{\P^\varepsilon}\bigg[\int_0^T e^{-rt}\big(\pi(X_t,C^\star_t)\d t-\d I^\star_t\big)\bigg]\bigg)\\
    		\geq &\frac{1}{\varepsilon}\bigg(\E^{\P^\varepsilon}\bigg[\int_0^T e^{-rt} \pi_c(X_t,C^\varepsilon_t)(C_t^\varepsilon-C_t^\star)\d t\bigg]-\E^{\P^\varepsilon}\bigg[\int_0^T e^{-rt} \d (I^\varepsilon_t-I^\star_t)\bigg]\bigg)\\
    		=&\E^{\P^\varepsilon}\bigg[\int_0^T e^{-rt} \pi_c(X_t,C^\varepsilon_t)(C_t-C_t^\star)\d t\bigg]-\E^{\P^\varepsilon}\bigg[\int_0^T e^{-rt} \d(I_t-I^\star_t)\bigg]\\
    		=&\E^{\P^\star}\bigg[\int_0^T e^{-rt} \Phi_t^\varepsilon \d(I_t-I_t^\star)\bigg],
    	\end{split}\end{equation}
    	where
			$$\Phi_t^\varepsilon:=\frac{\d \P^\varepsilon}{\d \P^\star}\bigg(\int_t^T e^{-(r+\delta)(s-t)}\pi_c(X_s,C_s^\varepsilon)\d s-1\bigg).$$
     Let $$\Phi^{\star}_t:=\int_t^T e^{-(r+\delta)(s-t)}\pi_c(X_s,C_s^\star)\d s-1.$$

      Invoking again Lemma \ref{l6} thanks to \eqref{converge}, we have $\frac{\d \P^\varepsilon}{\d \P^\star}\rightarrow 1$ as $\varepsilon \downarrow 0$, and $\frac{\d \P^\varepsilon}{\d \P^\star}$ is $p$-integrable for any $p\geq1$. Hence, by Fatou's lemma
    	\begin{equation}
			\label{geq}
    		\liminf_{\varepsilon\rightarrow 0}\E^{\P^\star}\left[\int_0^T e^{-rt}\Phi^\varepsilon_t \d I_t\right]\geq \E^{\P^\star}\left[\int_0^T e^{-rt} \Phi^\star_t \d I_t\right].
    	\end{equation}
    	Also, if
    	\begin{equation}
			\label{leq}
    	\limsup_{\varepsilon\rightarrow 0}\E^{\P^\star}\left[\int_0^T e^{-rt}\Phi^\varepsilon_t \d I^*_t\right]\leq \E^{\P^\star}\left[\int_0^T e^{-rt} \Phi^\star_t \d I^\star_t\right],
    	\end{equation}
    then combining \eqref{0}-\eqref{leq}, we have
    	\begin{displaymath}
    		\E^{\P^\star}\left[\int_0^T e^{-rt} \Phi^\star_t \d I_t\right]\leq \E^{\P^\star}\left[\int_0^T e^{-rt} \Phi^\star_t \d I^\star_t\right].
    	\end{displaymath}
    	By Theorem (1.33) in \cite{J79}, we get the desired result.

	It thus only remains to prove \eqref{converge} and \eqref{leq}. This is accomplished below.
\vspace{0.25cm}

  \emph{Proof of Equation \eqref{converge}.}  By Equation \eqref{solC}, it is easy to check that, for any $t\in[0,T]$, $C^\star_t\geq e^{-\delta T}(c\vee I^\star_0)$. If the initial capacity is such that $c=0$, then the firm would invest immediately at time $0$; i.e., $I^\star_0>0$. It thus follows that, there exists some constant $c_0>0$, such that $C^\star_t\geq c_0$ and $C^\varepsilon_t\geq \frac{1}{2}C^\star_t\geq c_0$ for any $t\in[0,T]$, when $\varepsilon\leq \frac{1}{2}$. Noting that $\pi$ is concave in its second component, we have
   \begin{align*}
   &\pi(X_t,C^\varepsilon_t)-\pi(X_t,C^\star_t)\leq \pi_c(X_t,C^\star_t)(C^\varepsilon_t-C^\star_t)\leq \pi_c(X_t,c_0)|C^\varepsilon_t-C^\star_t|,\\
   &\pi(X_t,C^\star_t)-\pi(X_t,C^\varepsilon_t)\leq \pi_c(X_t,C^\varepsilon_t)(C^\star_t-C^\varepsilon_t)\leq \pi_c(X_t,c_0)|C^\varepsilon_t-C^\star_t|.
   \end{align*}
   By Equation \eqref{i8}, we obtain that
     \begin{align*}
     &\bigg|\int_0^T e^{-rt}\big(\pi(X_t,C^\varepsilon_t)\d t-\d I^\varepsilon_t\big)-\int_0^T e^{-rt}\big(\pi(X_t,C^\star_t)\d t-\d I^\star_t\big)\bigg|\\
     \leq &\bigg|\int_0^T e^{-rt}\big(\widetilde{\pi}(X_t,C^\varepsilon_t)-\widetilde{\pi}(X_t,C^\star_t)\big)\d t-e^{-rT}(C^\varepsilon_T-C^\star_T)\bigg|\\
     \leq &\int_0^T e^{-rt}\big(\pi_c(X_t,c_0)+r+\delta\big)|C^\varepsilon_t-C^\star_t|\d t+e^{-rT}|C^\varepsilon_T-C^\star_T|\\
      =&\varepsilon \int_0^T e^{-rt}\big(\pi_c(X_t,c_0)+r+\delta\big)|C_t-C^\star_t|\d t+\varepsilon e^{-rT}|C_T-C^\star_T|
     \end{align*}
     where $\widetilde{\pi}(x,c)=\pi(x,c)-(r+\delta)c$.
       Noting that $$\E[|\pi_c(X_t,c_0)|^2]<\infty,\ \E[|C_t|^2]<\infty,\ \E[|C^\star_t|^2]<\infty,$$ the proof is complete.
    \vspace{0.25cm}

  \emph{Proof of Equation \eqref{leq}.} Set $\eta^\varepsilon=\int_0^T e^{-rt}\Phi^\varepsilon_t \d I^\star_t$. To prove \eqref{leq}, it is sufficient to show that $\eta^\varepsilon$ is uniformly integrable, for any $0\leq \varepsilon\leq \frac{1}{2}$. Noting that $2C^\varepsilon \geq C^\star$ for any $0\leq \varepsilon\leq \frac{1}{2}$ and $\pi$ is concave, it is easy to check by Fubini's theorem that
    	\begin{align*}
    		 &\int_0^T e^{-rt} \bigg(\int_t^T e^{-(r+\delta)(s-t)}\pi_c(X_s,C_s^\varepsilon)\d s\bigg) \d I^\star_t\\
    =&\int_0^T \bigg(\int_0^s e^{-rt}e^{-(r+\delta)(s-t)}\pi_c(X_s,C_s^\varepsilon)\d I_t^\star\bigg) \d s\\
    		\leq &\int_0^T e^{-rs} \pi_c(X_s,C_s^\varepsilon) C^\star_s\d s\leq 2\int_0^T e^{-rs} \pi_c\big(X_s,\frac{1}{2}C_s^\star\big) \frac{1}{2}C^\star_s\d s\\
    		\leq &2\int_0^T e^{-rs}\pi\big(X_s,\frac{1}{2}C^\star_s\big)\d s\leq \int_0^T e^{-rs}\big(2\pi^\star(X_s,r,\delta)+(r+\delta)C_s^\star\big)\d s,
    	\end{align*}
    where we use Equation \eqref{solC} in the first inequality. Recalling Lemma \ref{ii3}, the optimal capacity  $C^\star$ satisfies $C^\star_t\leq \widehat{C}_t$, for any $t\in[0,T]$, where $\widehat{C}_t$ is defined in \eqref{Chat}. By arguments similar to those employed in \eqref{i8}, we have
    \begin{align*}
    \int_0^T e^{-rt}\d I_t^\star &\leq e^{-rT}C^\star_T+\int_0^T (r+\delta)e^{-rt}C^\star_t \d t \leq e^{-rT}\widehat{C}_T+\int_0^T (r+\delta)e^{-rt}\widehat{C}_t \d t\\
    & \leq M\big(1+\sup_{t\in[0,T]}c^\star(X_s,r,\delta)\big),
    \end{align*}
    where $M$ depends on $r,\delta,T,c$. Hence,
    $$|\eta^\varepsilon|\leq M\frac{\d \P^\varepsilon}{\d \P^\star}\bigg(\int_0^T e^{-rs}\pi^\star(X_s,r,\delta)\d s+1+\sup_{t\in[0,T]}c^\star(X_t,r,\delta)\bigg).$$
    Since $\frac{\d \P^\varepsilon}{\d \P^\star}$ is $p$-integrable for any $p\geq 1$, by Assumption \ref{a3} and H\"older's inequality one finds that $\eta^\varepsilon$ is indeed square-integrable, as claimed.
    \end{proof}
		
The claim of Theorem \ref{ii15} finally follows by combining the results of Lemma \ref{l1-nec} and Lemma \ref{l2} (upon identifying $\widehat{\P} = \P^\star$, $\phi_t=\phi^{\star}_t$ (which is adapted and right-continuous due to the right-continuity of $\mathbb{F}$), and $I^*=I^{\star}$ in Lemma \ref{l1-nec}).

	
	\section{}
\label{AppendixB}

\renewcommand{\theequation}{C-\arabic{equation}}

The proof of Proposition \ref{prop:v} will be obtained with the help of several auxiliary lemmata. For simplicity, set $Y_t:=(b-\frac{1}{2}\sigma^2)t+\sigma B_t$. Then the economic shock can be written as $X^x_t=x \exp(Y_t)$. Under $ \P^{-k}$, $Y$ is a L\'{e}vy process (a Brownian motion with drift, in fact). We denote its Laplace exponent under $\P^{-\kappa}$ by $\phi$; in particular, this is given by
$$\phi(\lambda)=\frac{1}{2}\sigma^2\lambda^2+(b-\frac{1}{2}\sigma^2-\sigma \kappa)\lambda, \quad \lambda \in \R.$$

Then $M^\lambda_t:=\exp(\lambda Y_t - \phi(\lambda) t)$ is a martingale under $\P^{-\kappa}$ and induces a new measure $\Q$ which is equivalent to  $\P^{-\kappa}$ on $(\Omega, \mathcal{F}_t)$, $t\geq0$ (note that in the following we shall always work under $\P^{-\kappa}$ here, not under our reference measure $\P_0$). Under $\Q$, $Y$ is also a L\'{e}vy process and we denote its Laplace exponent under $\Q$ by $\phi^{\Q}$.

\begin{lemma}
\label{lem1}
We have
$$\phi^{\Q}(\lambda)=\phi(\lambda+\alpha)-\phi(\alpha).$$
\end{lemma}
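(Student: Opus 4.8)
The plan is to read off $\phi^{\Q}$ directly from its defining identity. Recall that, by definition of the Laplace exponent, $\phi^{\Q}(\lambda)$ is the function such that
\begin{equation*}
\E^{\Q}\big[\exp(\lambda Y_t)\big] = \exp\big(\phi^{\Q}(\lambda)\,t\big), \qquad t\geq 0,\ \lambda\in\R,
\end{equation*}
and analogously $\E^{\P^{-\kappa}}\big[\exp(\lambda Y_t)\big] = \exp\big(\phi(\lambda)\,t\big)$. Here $\Q$ is the Esscher transform of $\P^{-\kappa}$ with parameter $\alpha$, i.e. $\frac{\d\Q}{\d\P^{-\kappa}}\big|_{\mathcal{F}_t} = M^{\alpha}_t = \exp\big(\alpha Y_t - \phi(\alpha)\,t\big)$; this is a legitimate probability density because $M^{\alpha}$ is a $\P^{-\kappa}$-martingale with $M^{\alpha}_0 = 1$, which in turn follows from the stationary independent increments of $Y$ together with $\E^{\P^{-\kappa}}\big[\exp(\alpha Y_t)\big] = \exp\big(\phi(\alpha)\,t\big)$.

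The computation I would then carry out is a one-line change of measure: for fixed $t\geq 0$ and $\lambda\in\R$,
\begin{align*}
\E^{\Q}\big[\exp(\lambda Y_t)\big]
&= \E^{\P^{-\kappa}}\Big[\exp(\lambda Y_t)\exp\big(\alpha Y_t - \phi(\alpha)\,t\big)\Big]
= \exp\big(-\phi(\alpha)\,t\big)\,\E^{\P^{-\kappa}}\big[\exp\big((\lambda+\alpha)Y_t\big)\big]\\
&= \exp\big(-\phi(\alpha)\,t\big)\exp\big(\phi(\lambda+\alpha)\,t\big)
= \exp\big(\big(\phi(\lambda+\alpha)-\phi(\alpha)\big)\,t\big).
\end{align*}
Comparing with the defining identity for $\phi^{\Q}$ and dividing by $t>0$ yields $\phi^{\Q}(\lambda) = \phi(\lambda+\alpha) - \phi(\alpha)$, which is exactly the claim.

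There is essentially no genuine obstacle here. The only two points worth a remark are: (i) that $Y$ is still a L\'evy process under $\Q$, so that speaking of its Laplace exponent is legitimate — this is the standard fact that an Esscher transform preserves the L\'evy property, because the density $M^{\alpha}$ factorizes multiplicatively over disjoint time intervals thanks to the independent-increments property of $Y$; and (ii) that all the exponential moments appearing above are finite, which holds because $Y$ is a Brownian motion with drift under $\P^{-\kappa}$, so $\phi$ is a finite quadratic polynomial on all of $\R$ and hence $\lambda+\alpha$ always lies in its domain. Consequently the change of measure and the identification of $\phi^{\Q}$ go through without any integrability caveat.
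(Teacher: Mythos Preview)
Your proof is correct and follows essentially the same change-of-measure computation as the paper, which evaluates $\E^{\Q}[\exp(\lambda Y_1)] = \E^{-\kappa}[\exp((\lambda+\alpha)Y_1 - \phi(\alpha))] = \exp(\phi(\lambda+\alpha)-\phi(\alpha))$; your version is simply written for general $t$ rather than $t=1$ and includes the (helpful but not strictly required) remarks on the Esscher transform preserving the L\'evy property and on finiteness of exponential moments.
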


\begin{proof}
By definition of the Laplace exponent, we have
\begin{align*}
\exp(\phi^\Q(\lambda) ) &= \E^\Q \exp (\lambda Y_1 ) = \E^{-\kappa} \exp( \lambda Y_1 + \alpha Y_1 - \phi(\alpha) ) =\exp( \phi(\lambda+\alpha)-\phi(\alpha)).
\end{align*}
\end{proof}

Observing that
$$(X^x_t)^\alpha = x^\alpha e^{\alpha Y_t} = x^\alpha e^{\alpha Y_t- \phi(\alpha) t} e^{\phi(\alpha) t} =  x^\alpha M^\alpha_t  e^{\phi(\alpha) t},$$
the gross profit can be rewritten under $\P^{-\kappa}$ in the following way
\begin{align*}
\E^{-\kappa}\left[ \int_0^\infty  e^{-rt} \pi(X^x_t,C^{\star,-\kappa,c}_t)\d t \right]&=
\frac{x^\alpha}{1-\alpha} \E^{-\kappa}\left[\int_0^\infty e^{-rt + \alpha Y_t} (C^{\star,-\kappa,c}_t)^{1-\alpha} \d t \right]\\
&=
\frac{x^\alpha}{1-\alpha} \E^{-\kappa} \left[\int_0^\infty M^{\alpha}_t e^{-(r-\phi(\alpha))t} (C^{\star,-\kappa,c}_t)^{1-\alpha} \d t\right] \\
&=
\frac{x^\alpha}{1-\alpha} \E^{\Q}\left[ \int_0^\infty e^{- \tilde{r} t} (C^{\star,-\kappa,c}_t)^{1-\alpha} \d t\right],
\end{align*}
where $\tilde{r}:=r-\phi(\alpha)$. Now, if $\tilde{\tau}$ is an independent exponential random variable with parameter $\tilde{r}$, the gross profit can be finally rewritten as
\begin{align}
\label{eq:gross1}
\E^{-\kappa} \left[\int_0^\infty  e^{-rt} \pi(X^x_t,C^{\star,-\kappa,c}_t)\d t\right]  &=
\frac{x^\alpha}{(1-\alpha)\tilde{r}}  \E^{\Q} \big[(C^{\star,-\kappa,c}_{\tilde{\tau}})^{1-\alpha}\big] .
\end{align}

By Lemma 4.9 in \cite{BR}, one can show that the cost of the policy $I^{\star,-\kappa}$ can be written as
\begin{equation}
\label{EqnCost}
 \E^{-\kappa}\left[\int_0^\infty e^{-rt} dI^{\star,-\kappa}_t\right]=\E^{-\kappa} \left[\int_0^\infty e^{-rt} dC^{\star,-\kappa,c}_t\right] =  \E^{-\kappa} \big[C^{\star,-\kappa,c}_{ {\tau_r}}\big] - c,
 \end{equation}
where $\tau_r$ is an independent exponential time with parameter $r$. Summing up \eqref{eq:gross1} and \eqref{EqnCost}, the value function of our problem admits the representation
$$v(x,c)= \frac{x^\alpha}{(1-\alpha)\tilde{r}}  \E^{\Q} \big[(C^{\star,-\kappa,c}_{\tilde{\tau}})^{1-\alpha}\big]-\left(\E^{-\kappa} \big[C^{\star,-\kappa,c}_\tau\big] - c\right) .$$

To give explicit expressions for the two expectations above, we will use the fact that the running maximum of a Brownian motion stopped at an independent exponential time is exponentially distributed (see, e.g., \cite{handbook}). Let us recall two useful facts.
 \begin{lemma}\label{lem2}
 Denote by $Y^*$ the running maximum of $Y$.
 \begin{enumerate}
 \item $Y^*_{\tau_r}$ is exponentially distributed under $\P^{-\kappa}$ with parameter $\lambda>0$ that solves $\phi(\lambda)=r$.
 \item  $Y^*_{\tilde{\tau}}$  is exponentially distributed under $\Q$ with parameter $\mu=\lambda-\alpha$.
 \end{enumerate}
 \end{lemma}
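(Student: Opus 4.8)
The plan is to reduce both claims to the classical formula for the Laplace transform of the first-passage time of a Brownian motion with drift. Fix $a>0$ and let $T_a:=\inf\{t\geq 0:\,Y_t\geq a\}$; by path-continuity of $Y$ one has $Y_{T_a}=a$ on $\{T_a<\infty\}$ and the elementary identity
\begin{displaymath}
\{Y^*_{\tau_r}\geq a\}=\{T_a\leq \tau_r\}
\end{displaymath}
holds. Since $\tau_r$ is independent of $Y$ and exponentially distributed with parameter $r$, conditioning on $\mathcal{F}_{T_a}$ gives $\P^{-\kappa}(Y^*_{\tau_r}\geq a)=\E^{-\kappa}[e^{-rT_a}\mathds{1}_{\{T_a<\infty\}}]$. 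The whole of part (1) is therefore the statement $\E^{-\kappa}[e^{-rT_a}\mathds{1}_{\{T_a<\infty\}}]=e^{-\lambda a}$.

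To obtain this, I would use that, under $\P^{-\kappa}$, $Y$ is a Brownian motion with drift $b-\tfrac12\sigma^2-\sigma\kappa$ and volatility $\sigma$ (Girsanov), so that $\E^{-\kappa}[e^{\lambda Y_t}]=e^{t\phi(\lambda)}$ and $M^\lambda_t:=\exp(\lambda Y_t-\phi(\lambda)t)$ is a martingale for every $\lambda$. The parabola $\phi$ is convex with $\phi(0)=0$ and $\phi(+\infty)=+\infty$, so $\phi(\lambda)=r>0$ has a unique positive root $\lambda$, and this root lies strictly to the right of the minimizer of $\phi$; in particular $\phi$ is strictly increasing on $[\lambda,\infty)$. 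For this $\lambda$, $M^\lambda_t=\exp(\lambda Y_t-rt)$ and $0\leq M^\lambda_{t\wedge T_a}\leq e^{\lambda a}$ because $Y_s\leq a$ for $s\leq T_a$; optional sampling gives $\E^{-\kappa}[M^\lambda_{t\wedge T_a}]=M^\lambda_0=1$, and letting $t\uparrow\infty$, using dominated convergence together with $M^\lambda_{T_a}=e^{\lambda a-rT_a}$ on $\{T_a<\infty\}$ and $M^\lambda_t\to 0$ on $\{T_a=\infty\}$ (here $r>0$ is crucial), yields $e^{\lambda a}\,\E^{-\kappa}[e^{-rT_a}\mathds{1}_{\{T_a<\infty\}}]=1$. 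Combined with the previous paragraph this proves (1). One could instead simply quote the first-passage formula from \cite{handbook}.

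For (2) I would run exactly the same argument under $\Q$. By Lemma \ref{lem1}, under $\Q$ the process $Y$ is again a Brownian motion with drift, with Laplace exponent $\phi^{\Q}(\lambda)=\phi(\lambda+\alpha)-\phi(\alpha)$, and — as already used in the derivation of $v(x,c)$ — $\tilde\tau$ is an independent exponential time with parameter $\tilde r=r-\phi(\alpha)>0$. Hence part (1), applied with $(\P^{-\kappa},r,\phi)$ replaced by $(\Q,\tilde r,\phi^{\Q})$, shows that $Y^*_{\tilde\tau}$ is exponentially distributed under $\Q$ with parameter the unique positive root $\mu$ of $\phi^{\Q}(\mu)=\tilde r$. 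Rewriting $\phi^{\Q}(\mu)=\tilde r$ as $\phi(\mu+\alpha)=r$ shows that $\mu+\alpha$ is a positive root of $\phi(\cdot)=r$; since that equation has a unique positive root, namely the $\lambda$ of part (1), we conclude $\mu+\alpha=\lambda$, i.e.\ $\mu=\lambda-\alpha$ (which is positive, consistently with $\lambda>\alpha$).

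The only genuinely delicate point is the passage to the limit $t\uparrow\infty$ in $\E^{-\kappa}[M^\lambda_{t\wedge T_a}]=1$: it is justified by the uniform bound $0\leq M^\lambda_{t\wedge T_a}\leq e^{\lambda a}$ (dominated convergence) together with the pointwise limit $M^\lambda_{t\wedge T_a}\to e^{\lambda a-rT_a}\mathds{1}_{\{T_a<\infty\}}$, which uses that $r>0$ forces $M^\lambda_t\to 0$ on $\{T_a=\infty\}$ (where $Y_t\leq a$). Everything else — existence, uniqueness and positivity of the roots $\lambda$ and $\mu$, the Girsanov identities, and the conditioning identity above — is routine.
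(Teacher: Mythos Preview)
Your proof is correct and follows essentially the same route as the paper: for part~(2) you apply part~(1) under $\Q$ and use Lemma~\ref{lem1} to rewrite $\phi^{\Q}(\mu)=\tilde r$ as $\phi(\mu+\alpha)=r$, exactly as the paper does. The only difference is that for part~(1) the paper simply cites \cite{handbook}, whereas you supply the standard optional-sampling argument for the exponential martingale $M^\lambda$; this is precisely the proof behind the cited fact, so the approaches coincide.
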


\begin{proof}
We only need to prove the second assertion, as the first one can be found in \cite{handbook}. Note that $Y^*_{\tilde{\tau}}$  is exponentially distributed under $\Q$ with parameter $\mu>0$ that solves
$$ \phi^\Q(\mu)=\tilde{r}=r-\phi(\alpha) = \phi(\lambda) - \phi(\alpha).$$
Lemma \ref{lem1} yields the result.
\end{proof}

\begin{lemma}\label{lem3}
Let $Z$ be exponentially distributed with parameter $\nu>0$. Let $L\geq 1$ and $a<\nu$. Then
$$\E\big[\max\{ L, \exp(aZ)\}\big] = L + \frac{a}{\nu-a} L^{\frac{a-\nu}{a}}.$$
\end{lemma}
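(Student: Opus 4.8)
The plan is a direct computation via conditioning. The statement is only of interest when $a>0$ (for $a=0$ one has $\max\{L,1\}=L$ and the right-hand side reduces to $L+0$), so I would first record that, since $L\geq 1$ and $a>0$, one has $\exp(az)\geq L$ if and only if $z\geq z_0$, where
$$z_0:=\frac{\ln L}{a}\geq 0.$$
Using that $Z$ has density $\nu e^{-\nu z}\mathds{1}_{\{z\geq 0\}}$, I would then split the expectation according to whether $Z<z_0$ or $Z\geq z_0$:
$$\E\big[\max\{L,\exp(aZ)\}\big]=L\int_0^{z_0}\nu e^{-\nu z}\,\d z+\int_{z_0}^{\infty}\nu e^{(a-\nu)z}\,\d z,$$
observing that the second integral is finite precisely because $a<\nu$.

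The first term equals $L\bigl(1-e^{-\nu z_0}\bigr)$, and since $e^{-\nu z_0}=e^{-\nu\ln L/a}=L^{-\nu/a}$, this is $L-L^{(a-\nu)/a}$. For the second term, integrating and using $a-\nu<0$ gives $\nu\cdot\dfrac{e^{(a-\nu)z_0}}{\nu-a}$, and since $e^{(a-\nu)z_0}=L^{(a-\nu)/a}$, this equals $\dfrac{\nu}{\nu-a}\,L^{(a-\nu)/a}$. Adding the two contributions and collecting the coefficient of $L^{(a-\nu)/a}$ via $\dfrac{\nu}{\nu-a}-1=\dfrac{a}{\nu-a}$ yields
$$\E\big[\max\{L,\exp(aZ)\}\big]=L+\frac{a}{\nu-a}\,L^{\frac{a-\nu}{a}},$$
which is the claim. (As sanity checks: for $L=1$ one recovers the moment generating function $\nu/(\nu-a)$, and the formula is continuous as $a\downarrow 0$.)

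There is essentially no obstacle here: the only points requiring (minor) attention are the convergence of the tail integral, guaranteed by the hypothesis $a<\nu$, and the bookkeeping that rewrites $e^{-\nu z_0}$ and $e^{(a-\nu)z_0}$ as the powers $L^{-\nu/a}$ and $L^{(a-\nu)/a}$ appearing in the final expression. If one prefers, the same identity can be read off from the well-known law of the running maximum of a Brownian motion with drift sampled at an independent exponential time (cf.\ \cite{handbook}), which is exactly the route taken elsewhere in this appendix, but the elementary conditioning argument above is self-contained and is the version I would write out.
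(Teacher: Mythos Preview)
Your computation is correct. The paper states this lemma without proof, treating it as an elementary fact; your direct conditioning argument (splitting at $z_0=\ln L/a$ and integrating) is exactly the natural way to verify it, and your observation that the formula is only meaningful for $a>0$ is apt, since in the paper's applications one has $a=1$ or $a=1-\alpha$, both positive.
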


Bearing in mind the previous two lemmata, we can now prove Proposition \ref{prop:v}.

\begin{proof}[Proof of Proposition \ref{prop:v}]
We only need to derive the expression for $v(x,c)$ when $c>Kx$. With the help of Lemma \ref{lem3}, we obtain for the cost of our policy $I^{\star,-\kappa}$
\begin{align*}
 \E^{-\kappa}\big[ C^{\star,-\kappa,c}_{\tau_r}\big] - c &
= \E^{-\kappa}\big[ \max\{c, x K \exp(Y^*_{\tau_r})\}\big] - c\\
&
=  x K \E^{-\kappa} [\max\{\frac{c}{x K},  \exp(Y^*_{\tau_r})\}] - c\\
&=  x K  \left( \frac{c}{x K} + \frac{1}{\lambda-1} \left(\frac{c}{x K}\right)^{ 1-\lambda }\right)-c\\
&=
\frac{1}{\lambda-1}  (x K )^{ \lambda} c^{ 1-\lambda}.
\end{align*}

Next, let us compute the gross profit. It is given by
\begin{align*}
 \frac{x^\alpha}{(1-\alpha)\tilde{r}}  \E^{\Q} \big[\big(C^{\star,-\kappa,c}_{\tilde{\tau}}\big)^{1-\alpha}\big] &=   \frac{x^\alpha}{(1-\alpha)\tilde{r} }
 \E^{\Q} \big[\left( \max\{ c, xK  \exp(Y^*_{\tilde{\tau}})\}\right)^{1-\alpha} \big]\\
 &=
  \frac{x K^{1-\alpha}}{(1-\alpha)\tilde{r}} \E^{\Q} \left[\max\left\{
 \left( \frac{c}{xK}\right)^{1-\alpha}, \exp((1-\alpha) Y^*_{\tilde{\tau}})\right\}\right]\\
 &= \frac{x K^{1-\alpha}}{(1-\alpha)\tilde{r}}  \left(   \left( \frac{c}{xK}\right)^{1-\alpha} +  \frac{1-\alpha}{\mu+\alpha-1} \left( \frac{c}{xK}\right)^{(1-\alpha)\frac{1-\alpha-\mu}{1-\alpha}} \right) \\
&= \frac{x^\alpha c^{1-\alpha}}{(1-\alpha) \tilde{r} }
 +
\frac{1}{\lambda-1}
 \frac{x^\lambda K^\mu c^{1-\lambda}}{\tilde{r}},
 \end{align*}
where we have used Lemma \ref{lem3} and Lemma \ref{lem2} in the third and the fourth equality, respectively.
Note that the first term in the sum describes the profit that the agent obtains by never investing at all. The second term thus describes the profit from the additional investment.

 For the net profit, we thus obtain for any $c>Kx$
\begin{align*}
 v(x,c) &=
 \frac{x^\alpha c^{1-\alpha}}{(1-\alpha) \tilde{r} }
+ \frac{1}{\lambda-1}
 x^\lambda   c^{1-\lambda}
 \left( \frac{K^\mu}{\tilde{r}} - K^\lambda\right)\\
 &=
 \frac{x^\alpha c^{1-\alpha}}{(1-\alpha) \tilde{r} }
+ \frac{1}{\lambda-1}
 x^\lambda   c^{1-\lambda} K^\mu
 \left( \frac{1}{\tilde{r}} - K^\alpha\right).
 \end{align*}

Finally, the claimed regularity of $v$ follows by direct calculations exploiting the definition of $K$ (cf.\ \eqref{K-k-bis}).
 \end{proof}


\end{document}